\theoremstyle{plain}
\newtheorem{lemma}{Lemma}[section]
\newtheorem{theorem}[lemma]{Theorem}
\newtheorem{corollary}[lemma]{Corollary}
\newtheorem{proposition}[lemma]{Proposition}
\newtheorem{definition}[lemma]{Definition}
\newtheorem*{proposition*}{Proposition}
\newtheorem*{definition*}{Definition}
\newtheorem*{claim*}{Claim}
\newtheorem*{notation*}{Notation}
\newtheorem{remark}[lemma]{Remark}
\newtheorem{example}[lemma]{Example}
\newcommand{\Cu}{\mathrm{Cu}}
\newcommand{\supp}{\mathrm{supp}}
\newcommand{\C}{\mathrm{C}}
\begin{document}

\title{The Cuntz semigroup of continuous fields}
\date{\today}
\author{Ramon Antoine}
\author{Joan Bosa}
\author{Francesc Perera}
\address{Departament de Matem\`atiques, Universitat Aut\`onoma de Barcelona, 08193 Bellaterra, Barcelona, Spain}\email{ramon@mat.uab.cat, jbosa@mat.uab.cat, perera@mat.uab.cat}

\begin{abstract}
In this paper we describe the Cuntz semigroup of continuous fields of C$^*$-algebras over one dimensional spaces whose fibers have stable rank one and trivial $K_1$ for each closed, two-sided ideal. This is done in terms of the semigroup of global sections on a certain topological space built out of the Cuntz semigroups of the fibers of the continuous field. When the fibers have furthermore real rank zero, and taking into account the action of the space, our description yields that the Cuntz semigroup is a classifying invariant if and only if so is the sheaf induced by the Murray-von Neumann semigroup.
%is, as a functor, equivalent to the sheaf induced by the Murray-von Neuman semigroup.  
\end{abstract}

\maketitle

\section*{Introduction}
The Cuntz semigroup has become a popular object in recent years, mainly due to its connection with the classification program of unital, simple, separable and nuclear C$^*$-algebras by means of the Elliott invariant. It can be thought of as a functorial invariant $\Cu(\_)$ from the category of C$^*$-algebras to a category of semigroups, termed $\Cu$, that has certain continuity properties. The Elliott invariant is also functorial and consists of ordered topological K-Theory, the trace simplex and the pairing between K-Theory and traces; it is customarily denoted by $\mathrm{Ell}(\_)$. The Elliott conjecture asserts that an isomorphism between the invariants $\mathrm{Ell}(A)$ and $\mathrm{Ell}(B)$ of C$^*$-algebras $A$ and $B$ may be lifted to a $^*$-isomorphism between $A$ and $B$. Although the conjecture fails in general (see \cite{ro}, \cite{toms}), it has been verified for large classes of  C$^*$-algebras, all of which happen to absorb the Jiang-Su algebra $\mathcal Z$ tensorially. For these algebras, the Cuntz semigroup can be recovered functorially from the Elliott invariant (see \cite{bpt}). More recently, it was shown further in \cite{ADPS} (see also \cite{tiku}) that the Elliott invariant can be recovered from the Cuntz semigroup after tensoring with C$(\mathbb{T})$, and thus $\mathrm{Ell}(\_)$ and $\Cu(\mathrm{C}(\mathbb{T},\_))$ define equivalent functors.

In the non-simple case, the Cuntz semigroup has already been used successfully to classify certain classes of C$^*$-algebras, such as AI algebras (\cite{ciupercaelliott}), inductive limits of one dimensional non commutative CW-complexes with trivial $K_1$  (\cite{robadv}) or inductive limits of certain continuous-trace C$^*$-algebras (\cite{ciupercaelliottsantiago}), among others. Another class of (non-simple) algebras for which classification results have been obtained are continuous fields over $[0,1]$ of either Kirchberg algebras (with certain torsion freeness assumptions on their K-Theory) or AF algebras (\cite{DE,DEN}). In this situation, the classifying invariant consists of the sheaf of groups naturally induced by K-Theory. In the stably finite case, it is natural to ask whether the Cuntz semigroup of the continuous field captures, on its own, all the information of the K-Theory sheaf. This is one of the main objectives pursued in this article, and we are able to settle the question positively for a wide class of continuous fields.
%This would both unify a whole swath of invariants into a single one (?) and suggest its future use in further classification results. This is one of the objectives pursued in this article. O be, this is a theme that drives our investigations in this article.

In order to achieve our aim, we need techniques that allow to compute the Cuntz semigroup of continuous fields. In the case of algebras of the form $\mathrm{C}_0(X,A)$, for a locally compact Hausdorff space $X$, this was carried out in \cite{tiku} whenever $A$ is a simple, unital, non-type I ASH-algebra with slow dimension growth. For a not necessarily simple algebra $A$ of stable rank one with vanishing $K_1$ for each closed, two-sided ideal, and for a one dimensional locally compact Hausdorff space, one of the main results in \cite{APS} proves that there is an isomorphism between $\Cu\mathrm{(C}_0(X,A))$ and $\mathrm{Lsc}(X,\Cu(A))$. The latter is a semigroup of $\Cu(A)$-valued lower semicontinuous functions (see below for the precise definitions), and the isomorphism is given by point evaluation of (representatives of) Cuntz classes. The more general situation of $\mathrm{C}(X)$-algebras was also a theme developed in \cite{APS}, where spaces of dimension at most one and $\mathrm{C}(X)$-algebras whose fibers have stable rank one and vanishing $K_1$ for each closed, two-sided ideal were considered. In this paper, we shall refer to this class as \emph{$\mathrm{C}(X)$-algebras without $K_1$ obstructions}. For such spaces, the Cuntz semigroup of a $\mathrm{C}(X)$-algebra $A$ with no $K_1$ obstructions embeds into the product $\prod_{x\in X}\Cu(A_x)$. (This was shown in \cite{APS} for $X=[0,1]$, and we prove the one dimensional case here, based on the pullback construction carried out also in \cite{APS}.) 

The remaining problem of identifying the image of $\Cu(A)$ in $\prod_{x\in X}\Cu(A_x)$ for $\mathrm{C}(X)$-algebras $A$ in the said class leads to the analysis of the natural map $F_{\Cu(A)}:=\sqcup_{x\in X}\Cu(A_x)\to X$ and its sections. This is motivated by the fact that the $\Cu$ functor induces a presheaf $\Cu_A$ on $X$ that assigns, to each closed set $U$ of $X$, the semigroup $\Cu(A(U))$.  Hence we may expect to relate the Cuntz semigroup with the semigroup of continuous sections of an \'etale bundle. In the case of the presheaf defined by K-Theory, and for some continuous fields over $[0,1]$, this was considered in \cite{DE}. We show that, for a one dimensional space $X$ and a $\mathrm{C}(X)$-algebra with no $K_1$ obstructions, the presheaf $\Cu_A$ is in fact a sheaf. In order to recover $\Cu_A$ from the sheaf of continuous sections of the map $F_{\Cu(A)}\to X$, we need to break away from the standard approach (of, e.g. \cite{WE}) and consider a topological structure on $F_{\Cu(A)}$ that takes into account continuity properties of the objects in the category $\Cu$. Thus we develop a more abstract analysis of $\Cu$-valued sheaves that follows, in part, the spirit of \cite{APS}. This culminates in Theorem \ref{isomorfisme}, which allows to recover the Cuntz semigroup of a $\mathrm{C}(X)$-algebra $A$ with no $K_1$ obstructions over a one dimensional space $X$ as the semigroup of global sections on $F_{\Cu(A)}$. 

To conclude the paper, we apply the previous result in a crucial way to prove that, for one dimensional spaces and $\mathrm{C}(X)$-algebras with no $K_1$ obstructions whose fibers have real rank zero, the Cuntz semigroup and the $K$-theoretical sheaf defined by the Murray-von Neumann semigroup carry the same information. (This sheaf is defined, for a $\mathrm{C}(X)$-algebra $A$, as $\mathbb{V}_A(U)=V(A(U))$ whenever $U$ is a closed subset of $X$.) A key ingredient here is that the natural module structure of a $\mathrm{C}(X)$-algebra $A$ equips $\Cu(A)$ with an enriched structure via an action of $\Cu(\mathrm{C}(X))$. Thus, more precisely, we show that if $A$ and $B$ are two such $\mathrm{C}(X)$-algebras, then there is an action preserving semigroup isomorphism $\Cu(A)\cong \Cu(B)$ if, and only if, the sheaves $\mathbb{V}_A(\_)$ and $\mathbb{V}_B(\_)$ are isomorphic (Theorem \ref{V(A)}). 

\section{Preliminaries}
\subsection{Cuntz Semigroup}
%The Cuntz semigroup of a C$^*$-algebra $A$ is built out of equivalence classes of positive elements of the stabilization $A\otimes\mathcal{K}$. 
Let $A$ be a C$^*$-algebra, and let $a$, $b\in A_+$. We say that $a$ is \emph{Cuntz subequivalent} to $b$, in symbols $a\preceq b$, provided there is a sequence $(x_n)$ in $A$ such that $x_nbx_n^*$ converges to $a$ in norm. We say that $a$ and $b$ are \emph{Cuntz equivalent} if $a\preceq b$ and $b\preceq a$, and in this case we write $a\sim b$.

The Cuntz semigroup is defined as the quotient set $\Cu(A)=(A\otimes \mathcal{K})_+/\!\!\!\sim$, and its elements are denoted by $[a]$, for $a\in (A\otimes\mathcal{K})_+$. This set becomes an ordered semigroup, with order induced by Cuntz subequivalence and addition given by $[a]+[b]=[\Theta\left(\begin{smallmatrix} a & 0 \\ 0 & b \end{smallmatrix}\right)]$, where $\Theta\colon M_2(A\otimes\mathcal{K})\to A\otimes \mathcal{K}$ is any inner isomorphism.
The following summarizes some technical properties of Cuntz subequivalence that will be used in the sequel.

\begin{proposition}\label{here}{\rm (\cite{R1}, \cite{RK})}
 Let $A$ be a C*-algebra, and $a,b\in A_{+}$. The following are equivalent:
\begin{enumerate}[\rm(i)]
 \item $a\preceq b$.
  \item For all $\epsilon> 0$, $(a-\epsilon)_{+}\preceq b$.
\item For all $\epsilon> 0$, there exists $\delta> 0$ such that $(a-\epsilon)_{+}\preceq (b-\delta)_{+}$.

Furthermore, if $A$ is stable, these conditions are equivalent to
\item For every $\epsilon> 0$ there is a unitary $u\in U(\tilde{A})$ such that $u(a-\epsilon)_{+}u^{*}\in Her(b)$.
\end{enumerate}
\end{proposition}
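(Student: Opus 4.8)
The plan is to prove the first three conditions equivalent by the cycle (i) $\Rightarrow$ (iii) $\Rightarrow$ (ii) $\Rightarrow$ (i), and then to add (iv) under the stability hypothesis via (iv) $\Rightarrow$ (ii) and (i) $\Rightarrow$ (iv). Two elementary facts do most of the work. First, continuous functional calculus gives $\|(a-\epsilon)_{+}-a\|\le\epsilon$, the identity $((a-\epsilon)_{+}-\delta)_{+}=(a-\epsilon-\delta)_{+}$, the monotonicity $(a-\epsilon)_{+}\preceq(a-\epsilon')_{+}$ for $\epsilon\ge\epsilon'$, the relation $xcx^{*}\preceq c$ for every $x\in A$, $c\in A_{+}$, and $(b-\delta)_{+}=f(b)^{1/2}bf(b)^{1/2}\preceq b$ with $f(t)=(t-\delta)_{+}/t$. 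Second, R\o rdam's perturbation lemma from \cite{R1}: if $\|a-c\|<\epsilon$ then $(a-\epsilon)_{+}=dcd^{*}$ for some $d\in A$; combined with the definition of $\preceq$, this yields that whenever $a\preceq b$, for every $\epsilon>0$ there is $d\in A$ with $(a-\epsilon)_{+}=dbd^{*}$.

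With these, (i) $\Rightarrow$ (iii): given $\epsilon>0$ pick $x$ with $\|xbx^{*}-a\|<\epsilon/4$, then $\delta>0$ with $\|x\|^{2}\delta<\epsilon/4$, so $\|x(b-\delta)_{+}x^{*}-a\|<\epsilon/2$; the perturbation lemma gives $(a-\epsilon/2)_{+}\preceq x(b-\delta)_{+}x^{*}\preceq(b-\delta)_{+}$, hence $(a-\epsilon)_{+}\preceq(a-\epsilon/2)_{+}\preceq(b-\delta)_{+}$. The step (iii) $\Rightarrow$ (ii) is immediate from $(b-\delta)_{+}\preceq b$ and transitivity of $\preceq$. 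For (ii) $\Rightarrow$ (i), apply (ii) with $\epsilon=1/n$ to obtain $y_{n}\in A$ with $\|y_{n}by_{n}^{*}-(a-1/n)_{+}\|<1/n$; then $\|y_{n}by_{n}^{*}-a\|<2/n\to0$, so $a\preceq b$.

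Now assume $A$ stable. For (iv) $\Rightarrow$ (ii): conjugation by a unitary of $\tilde A$ preserves Cuntz equivalence and every positive element of $\mathrm{Her}(b)$ is Cuntz-subequivalent to $b$, so $u(a-\epsilon)_{+}u^{*}\in\mathrm{Her}(b)$ forces $(a-\epsilon)_{+}\sim u(a-\epsilon)_{+}u^{*}\preceq b$. For (i) $\Rightarrow$ (iv): fix $\epsilon>0$ and use the perturbation lemma to write $(a-\epsilon/2)_{+}=dbd^{*}=zz^{*}$ with $z:=db^{1/2}$, so that $z^{*}z=b^{1/2}d^{*}db^{1/2}\in\mathrm{Her}(b)$ and $(a-\epsilon)_{+}=(zz^{*}-\epsilon/2)_{+}$. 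Taking the polar decomposition $z=w|z|$ in $A^{**}$, the identity $g(zz^{*})=wg(z^{*}z)w^{*}$, valid for continuous $g$ vanishing at $0$, gives $(a-\epsilon)_{+}=w(z^{*}z-\epsilon/2)_{+}w^{*}$ with $(z^{*}z-\epsilon/2)_{+}\in\mathrm{Her}(z^{*}z)\subseteq\mathrm{Her}(b)$.

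The main obstacle is this last move: replacing the partial isometry $w\in A^{**}$ by an honest unitary $u\in U(\tilde A)$ with $u(a-\epsilon)_{+}u^{*}=(z^{*}z-\epsilon/2)_{+}$, and this is exactly where stability is indispensable. Here $w$ restricts to an isomorphism between the hereditary subalgebra generated by $(zz^{*}-\epsilon/4)_{+}$ (which contains $(a-\epsilon)_{+}$) and that generated by $(z^{*}z-\epsilon/4)_{+}$; in a stable $C^{*}$-algebra one has enough orthogonal room to extend such a spatial isomorphism between $\sigma$-unital hereditary subalgebras to a unitary of the minimal unitization. I would import this step from \cite{RK}, where it appears in essentially this form; everything else is bookkeeping with the functional-calculus identities and the perturbation lemma.
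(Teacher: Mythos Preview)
The paper does not prove this proposition at all: it is stated with the attributions \cite{R1}, \cite{RK} and used as a black box, so there is no ``paper's own proof'' to compare against. Your sketch is essentially the standard argument from those references and is correct as written; the cycle (i)$\Rightarrow$(iii)$\Rightarrow$(ii)$\Rightarrow$(i) via R\o rdam's perturbation lemma is exactly how this is usually done, and your treatment of (i)$\Rightarrow$(iv) correctly isolates the one nontrivial step---upgrading the partial isometry $w\in A^{**}$ coming from polar decomposition to a genuine unitary of $\tilde A$---and correctly attributes it to the stability hypothesis and the argument in \cite{RK}.
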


The structure of the Cuntz semigroup is richer than just being an ordered semigroup, as it belongs to a category with certain continuity properties.
Recall that, in an ordered semigroup $S$, an element $s$ is said to be \emph{compactly contained} in $t$,
denoted $s\ll t$, if whenever $t\leq \sup_{n} z_{n}$ for some increasing sequence $(z_{n})$
with supremum in $S$, there exists $m$ such that $s\leq z_m$. An element $s$ is said to be \emph{compact} if $s\ll s$. A
sequence $(s_{n})$ such that $s_{n}\ll s_{n+1}$ is termed \emph{rapidly increasing}. The following theorem summarizes some structural properties of the Cuntz semigroup.

\begin{theorem}{\rm (\cite{CEI})}
 Let $A$ be a C$^*$-algebra. Then:
\begin{enumerate}[\rm(i)]
\item Every increasing sequence in $\Cu(A)$ has a supremum in $\Cu (A)$.
\item Every element in $\Cu(A)$ is the supremum of a rapidly increasing sequence.
\item The operation of taking suprema and $\ll$ are compatible with addition.
\end{enumerate}
\end{theorem}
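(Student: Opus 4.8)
The plan is to prove the three assertions in the order (i) $\Rightarrow$ (a key lemma) $\Rightarrow$ (ii) $\Rightarrow$ (iii), after the standard reduction to $A$ stable: since $(A\otimes\cK)\otimes\cK\cong A\otimes\cK$ one has $\Cu(A\otimes\cK)=\Cu(A)$, and then condition (iv) of Proposition~\ref{here} becomes available. Throughout, it is convenient (following \cite{CEI}) to identify $[a]$ with the hereditary subalgebra $\mathrm{Her}(a)=\overline{a(A\otimes\cK)a}$, bearing in mind that two hereditary subalgebras represent the same Cuntz class precisely when they have Cuntz-equivalent strictly positive elements, and that $x\preceq y$ whenever $x\in\mathrm{Her}(y)_{+}$.

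For (i): given an increasing sequence $([a_k])$, the crux is a \emph{realization lemma} --- using Proposition~\ref{here}(iv) together with a compatible-intertwining argument, one replaces the $a_k$ by Cuntz-equivalent positive elements $b_k$ for which $\mathrm{Her}(b_k)\subseteq\mathrm{Her}(b_{k+1})$ for all $k$ (equivalently, in the Hilbert-module language of \cite{CEI}, the associated modules are realized as an honestly increasing chain of countably generated submodules of $A\otimes\cK$). Once this is done, $B:=\overline{\bigcup_k\mathrm{Her}(b_k)}$ is a hereditary subalgebra admitting the strictly positive element $b:=\sum_k 2^{-k}b_k/\|b_k\|$ (discarding zero terms), and $[b]=\sup_k[a_k]$: it dominates each $[a_k]=[b_k]$ since $b_k\in\mathrm{Her}(b)$, and if $[c]$ dominates all the $[a_k]$ then, since $(b-\epsilon)_{+}\in B$ lies within any prescribed tolerance of an element of some $\mathrm{Her}(b_k)$, a routine perturbation estimate gives $(b-\epsilon)_{+}\preceq b_k\preceq c$ for every $\epsilon>0$, whence $b\preceq c$ by Proposition~\ref{here}(i)--(ii). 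I expect this realization lemma to be the one genuinely non-formal ingredient: it is exactly where stability and Proposition~\ref{here}(iv) are needed, and everything below is bookkeeping with the functional-calculus identities $((x-s)_{+}-t)_{+}=(x-s-t)_{+}$ and $(y-s')_{+}\preceq(y-s)_{+}$ for $s<s'$, plus the estimate $\|p-q\|<\eta\Rightarrow(p-\eta)_{+}\preceq q$.

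\emph{Key Lemma: $[(a-\epsilon)_{+}]\ll[a]$ for all $a\in(A\otimes\cK)_{+}$ and $\epsilon>0$.} Suppose $[a]\le\sup_k[z_k]$ with $([z_k])$ increasing; by (i) this supremum is $[z]$ with $\mathrm{Her}(z)=\overline{\bigcup_k\mathrm{Her}(z_k')}$ for suitable $z_k'\sim z_k$. From $a\preceq z$, a few applications of Proposition~\ref{here}(iii) combined with the perturbation and cut-off estimates above yield $(a-\epsilon)_{+}\preceq z_k'\sim z_k$ for some $k$ (one first passes to $(z-\delta)_{+}\in\mathrm{Her}(z)$, then approximates it inside some $\mathrm{Her}(z_k')$ and absorbs the error into a slightly larger spectral cut-off), so $[(a-\epsilon)_{+}]\le[z_k]$. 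Now (ii) is immediate: for $[a]\in\Cu(A)$ put $a_n=(a-1/n)_{+}$; then $\sup_n[a_n]=[a]$ (any upper bound $[c]$ satisfies $(a-1/n)_{+}\preceq c$ for all $n$, hence $a\preceq c$), and $a_n=\bigl((a-\tfrac1{n+1})_{+}-t_n\bigr)_{+}$ with $t_n=\tfrac1n-\tfrac1{n+1}>0$, so the Key Lemma gives $[a_n]\ll[a_{n+1}]$; thus $([a_n])$ is rapidly increasing with supremum $[a]$.

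For (iii), one exploits stability to place representatives in orthogonal corners. Compatibility of $\ll$ with addition: given $u_1\ll v_1$ and $u_2\ll v_2$, write $v_1=[a]$ and $v_2=[b]$ with $a,b$ sitting in orthogonal corners of $M_2(A\otimes\cK)\cong A\otimes\cK$, so $[a]+[b]=[a+b]$ and $(a+b-\epsilon)_{+}=(a-\epsilon)_{+}+(b-\epsilon)_{+}$; by (ii), $v_1=\sup_k[(a-1/k)_{+}]$, so $u_1\le[(a-1/N)_{+}]$ for some $N$, and likewise $u_2\le[(b-1/N)_{+}]$, whence $u_1+u_2\le[((a+b)-1/N)_{+}]\ll[a+b]=v_1+v_2$ by the Key Lemma. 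Compatibility of suprema with addition: if $s=\sup_n s_n$ and $t=\sup_n t_n$, represent $s=[a]$ and $t=[b]$ with $a,b$ in orthogonal corners as above; then $s+t=[a+b]=\sup_k[((a+b)-1/k)_{+}]=\sup_k\bigl([(a-1/k)_{+}]+[(b-1/k)_{+}]\bigr)$, and since the Key Lemma makes each summand way-below $s$, respectively $t$, we get $[(a-1/k)_{+}]+[(b-1/k)_{+}]\le s_N+t_N\le\sup_n(s_n+t_n)$ for $N=N(k)$ large enough; taking the supremum over $k$ gives $s+t\le\sup_n(s_n+t_n)$, and the reverse inequality is immediate. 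The only serious obstacle in the whole argument is thus the realization lemma invoked in Part~(i).
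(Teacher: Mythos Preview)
The paper does not prove this theorem: it is stated as a known result and attributed to \cite{CEI} (Coward--Elliott--Ivanescu), with no argument given. There is therefore nothing in the paper to compare your proposal against.

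That said, your sketch is essentially the standard proof from \cite{CEI}, recast in the hereditary-subalgebra language rather than the Hilbert-module language used there. The ``realization lemma'' you flag as the non-formal ingredient is exactly right: in \cite{CEI} it is the statement that an increasing sequence of Cuntz classes can be represented by an honestly increasing chain of countably generated Hilbert submodules of $(A\otimes\cK)_{A\otimes\cK}$, and the supremum is then the closure of the union. Your Key Lemma $[(a-\epsilon)_+]\ll[a]$ and the deduction of (ii) and (iii) from it are also the standard route. So your outline is correct and faithful to the source the paper cites; just be aware that the paper itself treats this as background and offers no proof.
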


This allows to define a category $\Cu$ whose objects are
ordered semigroups of positive elements satisfying
conditions (i)-(iii) above. (Morphisms in this category are those semigroup maps that preserve all the structure.) We say that a semigroup $S$ in the category $\Cu$ is \emph{countably based} if there exists a countable
subset $X$ that is dense in $S$, meaning that every element of $S$ is the supremum of a rapidly increasing
sequence of elements in $X$. It was observed in \cite{APS} (see also \cite{Robert}) that $\Cu(A)$ is countably based for any separable C$^*$-algebra $A$.

As shown in \cite{CEI}, the category $\Cu$ is closed under countable inductive limits (in fact, it was also shown that $\Cu$ defines a sequentially continuous functor from the category of C$^*$-algebras to $\Cu$). A useful description of the inductive limit is available below.

\begin{proposition}\label{lionel}{\rm (\cite{CEI}, cf. \cite{BRTTW})}
Let $(S_{i},\alpha_{i,j})_{i,j\in \mathbb{N}}$ be an inductive system in the category $\Cu$.
Then  $(S,\alpha_{i,\infty})$ is the inductive limit of this system if
\begin{enumerate}[{\rm (i)}]
 \item The set $\bigcup_i \alpha_{i,\infty}(S_{i})$ is dense in $S$.
\item For any $x,y\in S_i$ such that $\alpha_{i,\infty}(x)\leq \alpha_{i,\infty}(y)$ and $x'\ll x$
there is $j$ such that $\alpha_{i,j}(x')\leq \alpha_{i,j}(y)$. 
\end{enumerate}
\end{proposition}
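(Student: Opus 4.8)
The plan is to verify directly that $(S,\alpha_{i,\infty})$ satisfies the universal property defining the inductive limit in $\Cu$ (reading into the statement the standing assumption that $(S,\alpha_{i,\infty})$ is a cone over the system, i.e.\ each $\alpha_{i,\infty}$ is a $\Cu$-morphism and $\alpha_{j,\infty}\circ\alpha_{i,j}=\alpha_{i,\infty}$ for $i\le j$). Write $D:=\bigcup_i\alpha_{i,\infty}(S_i)$; by hypothesis (i) every element of $S$ is the supremum of a rapidly increasing sequence with terms in $D$. Fix another cone $(T,\beta_i)$ over the system; I must produce a unique $\Cu$-morphism $\gamma\colon S\to T$ with $\gamma\circ\alpha_{i,\infty}=\beta_i$ for all $i$. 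Uniqueness is immediate: $\Cu$-morphisms preserve suprema of increasing sequences, so two candidates agreeing with the $\beta_i$ on $D$ must agree on all of $S$.

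For existence I would define $\gamma$ on $s\in S$ as follows. Choose a rapidly increasing sequence $s_n=\alpha_{k_n,\infty}(a_n)$ in $D$ with supremum $s$, arranging (after replacing each $a_n$ by a connecting-map image) that the stages $k_n$ are non-decreasing, and set $\gamma(s):=\sup_n\beta_{k_n}(a_n)$. The first point to check is that $\bigl(\beta_{k_n}(a_n)\bigr)_n$ is increasing in $T$, so that the supremum exists: putting $a_n^+:=\alpha_{k_n,k_{n+1}}(a_n)$ one has $\alpha_{k_{n+1},\infty}(a_n^+)=s_n\le s_{n+1}=\alpha_{k_{n+1},\infty}(a_{n+1})$, so for each $x'\ll a_n^+$ hypothesis (ii) supplies a stage $j$ with $\alpha_{k_{n+1},j}(x')\le\alpha_{k_{n+1},j}(a_{n+1})$; applying $\beta_j$ and using the cone relation $\beta_j\circ\alpha_{k_{n+1},j}=\beta_{k_{n+1}}$ gives $\beta_{k_{n+1}}(x')\le\beta_{k_{n+1}}(a_{n+1})$, and taking suprema along a rapidly increasing sequence with supremum $a_n^+$ (its terms lying way below $a_n^+$, and $\beta_{k_{n+1}}$ preserving suprema) yields $\beta_{k_n}(a_n)=\beta_{k_{n+1}}(a_n^+)\le\beta_{k_{n+1}}(a_{n+1})$. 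The second point is that $\gamma(s)$ is independent of the chosen sequence: if $(s_n')$ is another such sequence, then $s_n\ll s=\sup_m s_m'$ forces $s_n\le s_{m(n)}'$ for some $m(n)$, and repeating the manoeuvre above (push both sides to a common stage, apply (ii) to elements way below, push $\beta$ through) shows that the two resulting suprema coincide. This is the crux of the argument, and it is exactly the role of hypothesis (ii).

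Granting a well-defined $\gamma$, it remains to check that it is a $\Cu$-morphism factoring the cone, which I expect to be routine bookkeeping with the axioms of $\Cu$. The identity $\gamma\circ\alpha_{i,\infty}=\beta_i$ follows by evaluating $\gamma$ on $\alpha_{i,\infty}(a)$ via the sequence $(\alpha_{i,\infty}(a_n))_n$, where $(a_n)$ is rapidly increasing in $S_i$ with supremum $a$, using that $\alpha_{i,\infty}$ and $\beta_i$ preserve suprema and $\ll$. Monotonicity of $\gamma$ is the computation already performed for well-definedness; additivity follows by merging defining sequences of $s$ and $t$ into one for $s+t$ (here $\ll$ and suprema are compatible with addition) together with additivity of the $\beta_i$; preservation of suprema of increasing sequences follows from monotonicity and the standard diagonal argument (as in \cite{CEI}) that, from an increasing sequence $(s^{(j)})$ with supremum $s$, extracts a rapidly increasing sequence in $D$ with supremum $s$ whose terms are each dominated by some $s^{(j)}$; and preservation of $\ll$ follows because, if $s\ll t$ and $(t_n)$ is a defining sequence for $t$ in $D$, then $s\le t_N$ for some $N$, while $t_N\ll t_{N+1}$ transports, by a variant of the argument of the previous paragraph, to $\gamma(t_N)\ll\gamma(t_{N+1})\le\gamma(t)$.

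The only genuinely delicate step is the well-definedness and monotonicity in the construction of $\gamma$, which is precisely what hypothesis (ii) is tailored to provide; the rest is a matter of carefully unwinding the $\Cu$ structure. An alternative route, should the direct verification prove cumbersome, is to invoke the existence of $\varinjlim S_i$ in $\Cu$ from \cite{CEI} (whose canonical cone is known to satisfy (i) and (ii)), obtain by its universal property a comparison $\Cu$-morphism $\varinjlim S_i\to S$, and then show, again using (i) and (ii), that this morphism is an order isomorphism—hence a $\Cu$-isomorphism—so that $(S,\alpha_{i,\infty})$ is an inductive limit as well.
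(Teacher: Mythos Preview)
The paper does not actually supply a proof of this proposition: it is stated as a citation from \cite{CEI} (cf.\ \cite{BRTTW}) and used as a black box thereafter. So there is no ``paper's own proof'' to compare against.

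That said, your direct verification of the universal property is a sound way to establish the result, and the key uses of hypotheses (i) and (ii) are placed correctly: (i) lets you represent every $s\in S$ by a rapidly increasing sequence in $D$, and (ii) is exactly what is needed to show the candidate sequence $\bigl(\beta_{k_n}(a_n)\bigr)_n$ is increasing and that the resulting $\gamma(s)$ is independent of choices. The remaining checks (factoring the cone, additivity, preservation of suprema and of $\ll$) are indeed routine in the way you indicate. Your alternative route---produce the comparison $\Cu$-morphism $\varinjlim S_i\to S$ from the universal property of the limit constructed in \cite{CEI}, then use (i) for surjectivity (up to suprema) and (ii) for the order-embedding---is in fact the quicker argument and is closer in spirit to how this characterisation is typically proved in the literature; it avoids having to re-verify all the $\Cu$-axioms for $\gamma$ by hand.
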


If $S$ is a semigroup in $\Cu$, an \emph{order-ideal} $I$ of $S$ is a subsemigroup which is order-hereditary (that is, $x\in I$ whenever $x\leq y$ and $y\in I$) and that further contains all suprema of increasing sequences in $I$. For example, if $A$ is a C$^*$-algebra and $I$ is a closed, two-sided ideal, then $\Cu(I)$ is naturally an order-ideal of $\Cu(A)$. Given an order-ideal $I$ of $S$ as before, define a congruence relation on $S$ by $s\sim t$ if $s\leq t+z$ and $t\leq s+w$ for some $z$, $w\in I$, and put $S/I=S/\!\!\sim$, which is an ordered semigroup with addition $[s]+[t]=[s+t]$ and order given by $[s]\leq [t]$ if $s\leq t+z$ for some $z\in I$. It is not hard to verify that $S/I\in\Cu$. For a C$^*$-algebra $A$ and a closed ideal $I$, it was proved in \cite{crobsant} that $\Cu(A/I)\cong\Cu(A)/\Cu(I)$ (where the isomorphism is induced by the natural quotient map).

\subsection{C(X)-algebras}

Let $X$ be a compact Hausdorff space. A \emph{$\mathrm{C}(X)$-algebra} is a C$^*$-algebra $A$ together with a unital $^*$-homomorphism $\theta\colon \mathrm{C}(X)\to Z(\mathcal{M}(A))$, where $\mathcal{M}(A)$ is the multiplier algebra of $A$. The map $\theta$ is usually referred to as the \emph{structure map}. We write $fa$ instead of $\theta(f)a$ where $f\in \mathrm{C}(X)$ and $a\in A$.

If $Y\subseteq X$ is a closed set, let $A(Y)=A/C_0(X\smallsetminus Y)A$, which also becomes a $\mathrm{C}(X)$-algebra.
The quotient map is denoted by $\pi_Y\colon A\to A(Y) $, and if $Z$ is a closed
subset of $Y$ we have a natural restriction map $\pi_{Z}^Y\colon A(Y)\to A(Z)$. Notice that $\pi_Z=\pi_{Z}^Y\circ \pi_Y$.
If $Y$ reduces to a point $x$, we write $A_x$ instead of $A(\{x\})$ and we denote by $\pi_x$ the quotient map. The C$^*$-algebra $A_x$ is called the \emph{fiber} of $A$ at $x$ and the image of $\pi_x(a)\in A_x$ will be denoted by 
$a(x)$.

Given a $\mathrm{C}(X)$-algebra $A$ and $a\in A$, the map $x\mapsto \|a(x)\|$ is upper semicontinuous (see \cite{Blan}). If this map is actually continuous for every $a\in A$, then we say that $A$ is a \emph{continuous field} (or also a C$^*$-bundle, see \cite{nilsen,Blan}). For a continuous field $A$, a useful criterion to determine when an element $(a_x)\in \prod_{x\in X}A_x$ comes from an element of $A$ is the following: given $\epsilon>0$ and $x\in X$, if there is $b\in A$ and a neighborhood $V$ of $x$ such that $\|b(y)-a_y\|<\epsilon$ for $y\in V$, then there is $a\in A$ such that $a(x)=a_x$ for all $x$ (see \cite[Definition 10.3.1]{Dixmier}).

It was proved in \cite[Lemma 1.5]{APS} that, if $A$ is a $\mathrm{C}(X)$-algebra, then this is also the case for $A\otimes\mathcal{K}$ and, in fact, for any closed set $Y$ of $X$, there is a $^*$-isomorphism
\[
\varphi_Y\colon(A\otimes\mathcal{K})(Y)\to A(Y)\otimes\mathcal{K}
\] 
such that $\varphi_Y\circ \pi_Y '=\pi_Y\otimes 1_{\mathcal{K}}$, where
$\pi_Y\colon A\to A(Y)$ and $\pi'_Y\colon A\otimes\mathcal{K}\to (A\otimes\mathcal{K})(Y)$. This yields, in particular, that $(A\otimes\mathcal{K})(x)\cong A_x\otimes\mathcal{K}$ for any $x\in X$, with $(a\otimes k)(x)\mapsto a(x)\otimes k$.

Using this observation, the map induced at the level of Cuntz semigroups $\Cu(A)\to \Cu(A_x)$ can be 
viewed as $[a]\mapsto [\pi_x(a)]$. Similarly, if $Y$ is closed in $X$, the map $\pi_Y$ induces $\Cu(A)\to \Cu(A(Y))$, that can be thought of as
$[a]\mapsto [\pi_Y(a)]$. Thus, when computing the Cuntz semigroup of a $\mathrm{C}(X)$-algebra $A$, we may and will assume that $A$, $A_x$ and $A(Y)$ are stable.

\section{Sheaves of semigroups and continuous sections}

Our aim in this section is to relate the Cuntz semigroup of a $\mathrm{C}(X)$-algebra $A$ with the semigroup of continuous sections of a certain topological space, which is built out of the information on the fibers. We will first define what is meant by a presheaf of semigroups on a topological space, along the lines of \cite{WE}, with some modifications.

Let $X$ be a topological space. As a blanket assumption, we shall assume that $X$ is always compact, Hausdorff and second countable, therefore metrizable. 
Denote by $\mathcal{V}_X$ the category of all closed subsets of $X$ with non-empty interior, with the morphisms given by inclusion.

A \emph{presheaf} over $X$ is a contravariant functor
\[
\mathcal{S}\colon\mathcal{V}_X\to \mathcal{C}\,
\]
where $\mathcal{C}$ is a subcategory of the category of sets which is closed under sequential inductive limits. Thus, it consists of an assignment, for each $V\in\mathcal{V}_X$ of
an object $\mathcal{S}(V)$ in $\mathcal{C}$ and a collection of maps (referred to as \emph{restriction homomorphisms})
$\pi^{V'}_V\colon\mathcal{S}(V')\to \mathcal{S}(V)$ whenever $V\subseteq V'$ in
$\mathcal{V}_X$. We of course require that these maps satisfy $\pi^V_V=\mathrm{id}_V$ and $\pi^U_W=\pi^V_W\pi^U_V$ if $W\subseteq
V\subseteq U$.

Let $V,V'\in\mathcal{V}_X$ be such that $V\cap V'\in\mathcal{V}_X$. 
A presheaf is called a \emph{sheaf} if the map 
\[\pi_{V}^{V\cup V'}\times\pi_{V'}^{V\cup V'}\colon 
\mathcal{S}(V\cup V')\to \{(f,g)\in \mathcal{S}(V)\times\mathcal{S}(V')\mid
\pi_{V\cap V'}^V(f)=\pi_{V\cap V}^{V'}(g)\},
\]
is bijective.

A presheaf (respectively a sheaf) is \emph{continuous} if for any decreasing sequence of
closed subsets $(V_i)_{i=1}^\infty$ whose intersection $\cap_{i=1}^\infty V_i=V$
belongs to $\mathcal{V}_X$, the limit $\lim \mathcal{S}(V_i)$ is
isomorphic to $\mathcal{S}(V)$. 

Consider a presheaf $\mathcal{S}$ over $X$. For any $x\in X$, define the \emph{fiber} (or also \emph{stalk}) of $\mathcal{S}$ at $x$ as  
\[
S_x:=\lim_{x\in \mathring{V}}\mathcal{S}(V)\,,
\]
with respect to the restriction maps.

We shall be exclusively concerned with continuous presheaves (or sheaves) $\mathcal S$ with target values in the category $\mathrm{Sg}$ of semigroups, in which case we will say that $\mathcal S$ is a (pre)sheaf of semigroups. As a general notation, we will use $S$ to denote the semigroup $\mathcal{S}(X)$. We will also denote $\pi_x\colon S\to S_x$ the natural map from $S$ to the fiber $S_x$, as well as $\pi_U\colon S\to S(U)$ rather than $\pi_U^X$. 

Our main motivation for considering presheaves of semigroups stems from the study of $\mathrm{C}(X)$-algebras. Indeed, as it is easy to verify, given a $\mathrm{C}(X)$-algebra $A$, the assignments 
\[
\begin{array}{cccc} \Cu_A\colon &   \mathcal{V}_X&\to    &\Cu \\
				&  U             &\mapsto&\Cu(A(U)) \end{array} \,\,\,\, \,\,\,\,\text{ and } \,\,\,\, \,\,\,\, \begin{array}{cccc}\mathbb{V}_A\colon&   \mathcal{V}_X&\to& \mathrm{Sg} \\  
																  &  U&           \mapsto &V(A(U))\end{array}
\]
define continuous presheaves of semigroups. If $U\subseteq V$, the restriction maps $\pi_V^ U\colon A(U)\to A(V)$ and the limit maps $\pi_x\colon A\to A_x$ define,  by functoriality, semigroup maps $\Cu(\pi_V^ U)$ and $\Cu(\pi_x)$ in the case of the Cuntz semigroup, and likewise in the case for the semigroup of projections. For ease of notation, and unless confusion may arise, we shall still denote these maps by $\pi_V^ U$ and $\pi_x$.

We will say that a (pre)sheaf is \emph{surjective} provided all the restriction maps are surjective. This is clearly the case for the presheaf $\Cu_A$ for a general $\mathrm{C}(X)$-algebra $A$, and also for $\mathbb{V}_A$ if $A$ has real rank zero (which is a rather restrictive hypothesis, see e.g. \cite{pasnicu2} and \cite{pasnicu1}). As we shall see in the sequel, 
%still
$\Cu_A$ and $\mathbb{V}_A$ determine each other under milder assumptions.

Most of the discussion in this and the subsequent section will consider surjective (pre)\- sheaves of semigroups $\mathcal{S}\colon \mathcal{V}_X\to\Cu$, and we will need to develop a somewhat abstract approach on how to recover the information of the sheaf from the sheaf of sections of a bundle $F_S\to X$, where $F_S$ stands for the disjoint union of all the fibers (see\cite{WE}). This is classically done by endowing $F_S$ with a topological structure that glues together the fibers (which are computed as algebraic limits in the category of sets). One of the main difficulties here resides in the fact that the inductive limit in $\Cu$ is not the algebraic inductive limit, even in the case of the fiber of a surjective presheaf. We illustrate this situation below with an easy example. For a semigroup $S$ in $\Cu$ and a compact Hausdorff space $X$, we shall denote by $\mathrm{Lsc}(X, S)$ the set of those functions $f\colon X\to S$ such that $\{t\in X\mid f(t)\gg s\}$ is open in $X$ for all $s\in S$. If $X$ is finite dimensional, it was shown in \cite{APS} that $\mathrm{Lsc}(X,S)\in \Cu$.

\begin{example}
 \rm Let $A=\mathrm{C}([0,1],M_n(\mathbb{C}))$, where $n\geq 2$. We know that $\Cu(A)\cong \rm{Lsc}([0,1],\overline{\mathbb{N}})$, where $\overline{\mathbb{N}}=\mathbb{N}\cup\{\infty\}$ (see, e.g. \cite{Robert0}).

Now, let
$\{U_{m}=[\frac{1}{2}-\frac{1}{m},\frac{1}{2}+\frac{1}{m}]\}_{m\geq 2}$, which is a sequence of decreasing closed subsets of $[0,1]$ whose intersection is $\{1/2\}$. It is easy to verify that, in $\Cu$,
$$\lim_{\rightarrow}\Cu(A(U_{n}))=\lim_{\rightarrow}\mathrm{Lsc}(U_{n},\overline{\mathbb{N}})=\Cu(A(1/2))=\overline{\mathbb{N}}.$$

However, the computation of the direct limit above in the category of semigroups yields
$\{(a,b,c)\in\mathbb{N}^3\mid b\leq a,c\}$.
\end{example}

For a surjective continuous presheaf $\mathcal{S}\colon\mathcal{V}_X\to\Cu$, let $F_S:=\sqcup_{x\in
X}S_x$,  where $S_x=\lim_{x\in
\mathring{V}}S(V)$, and define $\pi\colon F_S\to X$ by $\pi(s)=x$ if $s\in S_x$.

We define a \emph{section} of $F_S$ as a function $f\colon X \to F_S$ such
that $f(x)\in S_x$. We equip the set of sections with pointwise addition and order, so this set 
becomes an ordered semigroup. Notice also that the set of sections is closed under pointwise
suprema of increasing sequences.

Any element $s\in S$ induces a section $\hat{s}$, which is defined by $\hat{s}(x)=\pi_x(s)\in S_x$ and will be referred to as the \emph{section induced } by $s$.  

\begin{lemma}\label{limit}
Let $\mathcal{S}\colon \mathcal{V}_X\to\Cu$ be a presheaf on $X$, and let $s$, $r\in S$.
\begin{enumerate}[{\rm (i)}]
\item If $\hat{s}(x)\leq
\hat{r}(x)$ for some $x\in X$ then, for each $s'\ll s$ in $S$ there is a
closed set $V$ with
$x\in\mathring{V}$ such that $\pi_{V}(s')\leq\pi_V(r)$. In particular,
$\hat{s'}(y)\leq\hat{r}(y)$ for all $y\in V$.
\item If, further, $\mathcal{S}$ is a sheaf, $U$ is a closed subset of $X$, and
$\hat{s}(x)\leq \hat{r}(x)$ for all $x\in U$, then for each $s'\ll s$ there is a
closed set $W$ of $X$ with $U\subset\mathring{W}$ and $\pi_W(s')\leq\pi_W(r)$.
\end{enumerate}
\end{lemma}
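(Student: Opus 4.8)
The plan is to handle (i) first, using the description of inductive limits in the category $\Cu$ (Proposition~\ref{lionel}) applied to the stalk $S_x = \lim_{x\in\mathring V}\mathcal S(V)$, and then bootstrap from (i) to (ii) by a finite-covering argument that exploits the sheaf condition to glue the local data. For (i), suppose $\hat s(x) = \pi_x(s) \leq \pi_x(r) = \hat r(x)$ in $S_x$, and fix $s' \ll s$ in $S = \mathcal S(X)$. The directed system computing $S_x$ runs over closed neighborhoods $V$ of $x$ (ordered by reverse inclusion), with limit maps $\pi^X_V$ followed by the canonical map $\mathcal S(V) \to S_x$; write $\alpha_{V,x}$ for the latter. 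Then $\alpha_{V,x}(\pi_V(s)) = \pi_x(s) \leq \pi_x(r) = \alpha_{V,x}(\pi_V(r))$ for every such $V$, and since $s' \ll s$ implies $\pi_V(s') \ll \pi_V(s)$ (restriction maps are $\Cu$-morphisms, hence preserve $\ll$), condition (ii) of Proposition~\ref{lionel} gives a closed neighborhood $V$ of $x$ with $\pi_V(s') \leq \pi_V(r)$ in $\mathcal S(V)$. Applying the restriction $\pi^V_y$ for $y \in V$ (again a $\Cu$-morphism, hence order-preserving) yields $\hat{s'}(y) = \pi_y(s') \leq \pi_y(r) = \hat r(y)$ for all $y \in V$; I should take $V$ with $x \in \mathring V$, which is automatic since the system is indexed by neighborhoods of $x$.

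For (ii), I would cover $U$ as follows. By (i), for each $x \in U$ there is a closed set $V_x$ with $x \in \mathring{V_x}$ and $\pi_{V_x}(s') \leq \pi_{V_x}(r)$. The interiors $\{\mathring{V_x}\}_{x\in U}$ cover the compact set $U$, so finitely many $\mathring{V_{x_1}}, \dots, \mathring{V_{x_n}}$ suffice; set $V_i := V_{x_i}$ and $W := \bigcup_{i=1}^n V_i$, which is closed with $U \subseteq \bigcup \mathring{V_i} \subseteq \mathring W$. It remains to deduce $\pi_W(s') \leq \pi_W(r)$ from $\pi_{V_i}(s') \leq \pi_{V_i}(r)$ for all $i$. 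The inequality $\pi_W(s') \leq \pi_W(r)$ holds in $\mathcal S(W)$ iff, after applying the injective sheaf map $\mathcal S(W) \hookrightarrow \prod_i \mathcal S(V_i)$ obtained by iterating the two-set sheaf condition, the images are comparable; since order in $\Cu$ is determined coordinatewise along an injective $\Cu$-morphism and $\pi^W_{V_i}(\pi_W(s')) = \pi_{V_i}(s') \leq \pi_{V_i}(r) = \pi^W_{V_i}(\pi_W(r))$ for each $i$, we get the claim. A small technical point: the sheaf condition as stated is for two sets with $V \cap V' \in \mathcal V_X$, so I need the $V_i$ to have pairwise intersections with nonempty interior (or reduce to a subcover where this holds); this can be arranged by shrinking/perturbing the $V_{x_i}$ slightly, or handled by an induction on $n$ using associativity of the gluing.

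The main obstacle I anticipate is precisely this last bookkeeping: the sheaf axiom is only postulated for pairs of closed sets whose intersection again lies in $\mathcal V_X$ (nonempty interior), so extending it to a finite union of a compact cover requires care when triple or higher intersections are lower-dimensional. On a one-dimensional (or more generally finite-dimensional, metrizable) base this is manageable — one can thicken each $V_{x_i}$ to a regular closed set and arrange the nerve of the cover to be well-behaved — but it is the step where the topological hypotheses on $X$ genuinely enter, and it is where I would spend most of the writing effort. The order-theoretic input (that $\ll$ and $\leq$ pass through $\Cu$-morphisms, and that order along an injection is coordinatewise) is routine given the properties of the category $\Cu$ recalled above.
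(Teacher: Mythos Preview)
Your proposal is correct and follows essentially the same route as the paper: part (i) via Proposition~\ref{lionel} applied to the stalk $S_x$, and part (ii) via a compactness/finite-cover argument combined with the sheaf property to glue the local inequalities $\pi_{V_i}(s')\leq\pi_{V_i}(r)$ into $\pi_W(s')\leq\pi_W(r)$. The technical worry you flag about the sheaf axiom requiring $V\cap V'\in\mathcal V_X$ is one the paper does not address either---it simply asserts the gluing---so your added care there is, if anything, more scrupulous than the original.
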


\begin{proof}
(i): Recall that $S_x=\lim \mathcal{S}(V_n)$, where $(V_n)$ is a decreasing
sequence of
closed sets whose intersection is $x$ (we may take $V_1=X$, and
$x\in\mathring{V_n}$ for all $n$). Then, by Proposition \ref{lionel}, two
elements $s,r$ satisfy $\hat{s}(x)=\pi_x(s)\leq\pi_x(r)=\hat{r}(x)$ in
$S_x$ if and only if for all $s'\ll s$ there exists $j\geq 1$ such that
$\pi_{V_j}(s')\leq \pi_{V_j}(r)$, and in particular $\hat{s'}(y)\leq
\hat{r}(y)$ for all $y$ in $V_j$.

(ii): Assume now that $\mathcal{S}$ is a sheaf, and take $s'\ll s$. Apply (i) to
each $x\in U$, so that we can find $U_x$ with $x\in \mathring{U}_x$ such that
$\pi_{U_x}(s')\leq\pi_{U_x}(r)$. By compactness of $U$, there are a finite
number $U_{x_1},\ldots,U_{x_n}$ whose interiors cover $U$. Put $W=\cup_i
U_{x_i}$. As $\mathcal{S}$ is a sheaf and $\pi_{U_{x_i}}(s')\leq
\pi_{U_{x_i}}(r)$ for all $i$, it follows that $\pi_W(s')\leq \pi_W(r)$.
\end{proof}

Following Lemma \ref{limit}, our aim is to define a topology in $F_{\mathcal{S}}$ for which the induced sections will be continuous. Instead of abstractly considering the final topology 
generated by the induced sections, we define a particular topology which will satisfy our needs. Given $U$ an open set in $X$ and $s\in S$, put 
\[\ U_{s}^{\gg}=\{a_x \in F_\mathcal{S}\mid \hat{s'}(x)\ll a_x \text{ for some } x\in U \text{ and some } s\ll s'  \}\,,\]
and  equip $F_{\mathcal{S}}$ with the topology generated by these sets. 

Now consider an induced section $\hat s$ for some $s\in S$, and an open set of the form $U_{r}^{\gg}$ for some $r\in S$ and $U\subseteq X$. 
Suppose  $x\in \hat s^{-1}(U_r^\gg)$. Note that $x\in U$ and that $\hat{s}(x)\gg \hat{s'}(x)$ for some $s'\gg r$. Using that
$s'=\sup(s'_n)$ for a rapidly increasing sequence $(s'_n)$, there exists
$n_0$ such that $r\ll s'_{n_{0}} \ll s' $. Hence, by Lemma \ref{limit}, there is a closed set $V$ such that $x\in\mathring{V}$ and
$\hat{s'}_{n_0}(y)\ll \hat{s}(y)$ for all $y$ in $V$. Thus, $x\in U\cap\mathring{V}\subseteq \hat{s}^{-1}(U_{r}^{\gg})$, proving that $\hat s^{-1}(U_r^\gg)$ is open in $X$, from which it easily
follows that $\hat s$ is continuous with this topology.

\begin{remark}
\label{rem:surjective}
{\rm Notice that if $\mathcal S$ is a surjective presheaf, then any element $a\in S_x$ can be written as
$a=\sup(\hat{s}_{n}(x))$, where $s_n$ is a rapidly increasing sequence in $S$. This is possible as the map $S\to S_x$ is surjective, hence
$a=\pi_x(s)$ for some $s\in S$, and $s=\sup s_n$ for such a sequence.}
\end{remark}

The following result gives another characterization of continuity that will
prove useful in the sequel.

\begin{proposition}\label{continuity}
Let $X$ be a compact Hausdorff space and $\mathcal{S}$ be a continuous surjective presheaf
on $\Cu$. Then, for a section $f\colon X\to F_S$, the following conditions are
equivalent:
\begin{enumerate}[{\rm (i)}]
\item $f$  is continuous.

\item For all $x\in X$ and $a_x\in S_x$ such that $a_x\ll f(x)$,
there exist a closed set $V$ with $x\in\mathring{V}$
and $s\in S$ such that $\hat{s}(x)\gg a_{x}$ and
$\hat{s}(y)\ll f(y)$ for all $y\in V$.
\end{enumerate}

\end{proposition}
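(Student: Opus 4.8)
The plan is to establish both implications directly, closely following the computation carried out just before the statement which shows that an induced section $\hat{s}$ is continuous.

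For $(ii)\Rightarrow(i)$: since the sets $U_r^{\gg}$, with $U\subseteq X$ open and $r\in S$, form a subbasis of the topology on $F_{\mathcal S}$, it suffices to show that $f^{-1}(U_r^{\gg})$ is open for each such set. Fix $x\in f^{-1}(U_r^{\gg})$. Then $x\in U$ and, by the definition of $U_r^{\gg}$, there is $s'\gg r$ with $\hat{s'}(x)\ll f(x)$. Put $a_x:=\hat{s'}(x)\in S_x$; since $a_x\ll f(x)$, hypothesis $(ii)$ provides a closed set $V$ with $x\in\mathring{V}$ and an element $t\in S$ with $\hat{t}(x)\gg a_x$ (hence $\hat{s'}(x)\le\hat{t}(x)$) and $\hat{t}(y)\ll f(y)$ for all $y\in V$. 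Writing $s'$ as the supremum of a rapidly increasing sequence and using $r\ll s'$, pick $s''$ with $r\ll s''\ll s'$. Since $\hat{s''}(x)\le\hat{s'}(x)\le\hat{t}(x)$ and $s''\ll s'$, Lemma \ref{limit}(i) yields a closed set $W$ with $x\in\mathring{W}$ and $\hat{s''}(y)\le\hat{t}(y)$ for all $y\in W$. Then for every $y$ in the open neighbourhood $U\cap\mathring{V}\cap\mathring{W}$ of $x$ we get $\hat{s''}(y)\le\hat{t}(y)\ll f(y)$ with $s''\gg r$, so $f(y)\in U_r^{\gg}$. Hence $f^{-1}(U_r^{\gg})$ is a neighbourhood of each of its points, so it is open and $f$ is continuous.

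For $(i)\Rightarrow(ii)$: fix $x\in X$ and $a_x\in S_x$ with $a_x\ll f(x)$. As $\mathcal S$ is surjective, Remark \ref{rem:surjective} lets us write $f(x)=\sup_n\hat{s}_n(x)$ for a rapidly increasing sequence $(s_n)$ in $S$. Since $a_x\ll f(x)$ there is $n$ with $a_x\le\hat{s}_n(x)$, and because $\pi_x$ preserves $\ll$ we obtain $a_x\le\hat{s}_n(x)\ll\hat{s}_{n+1}(x)$; hence $s:=s_{n+1}$ satisfies $\hat{s}(x)\gg a_x$. For the neighbourhood, observe that $\hat{s}_{n+2}(x)\ll\hat{s}_{n+3}(x)\le f(x)$ and $s_{n+2}\gg s_{n+1}$, so $f(x)\in X_{s_{n+1}}^{\gg}$; by continuity of $f$, the set $G:=f^{-1}(X_{s_{n+1}}^{\gg})$ is an open neighbourhood of $x$. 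Given $y\in G$, the definition of $X_{s_{n+1}}^{\gg}$ furnishes some $s'_y\gg s_{n+1}$ with $\hat{s'_y}(y)\ll f(y)$, and since $s_{n+1}\le s'_y$ this forces $\hat{s}(y)=\hat{s}_{n+1}(y)\le\hat{s'_y}(y)\ll f(y)$. Finally, $X$ being compact Hausdorff (hence regular), choose a closed $V$ with $x\in\mathring{V}\subseteq V\subseteq G$; then $V\in\mathcal{V}_X$, $\hat{s}(x)\gg a_x$, and $\hat{s}(y)\ll f(y)$ for all $y\in V$, which is $(ii)$.

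The individual steps are routine once the right elements are chosen; the point requiring care, and which I expect to be the main obstacle, is the asymmetry in the definition of $U_r^{\gg}$: membership of $f(y)$ records only the \emph{existence} of some witness $s'_y\gg r$, not a uniform one. In $(i)\Rightarrow(ii)$ I would circumvent this by drawing the witness at $x$ from the rapidly increasing sequence approximating $f(x)$ and then using $s_{n+1}\le s'_y$ to transfer the relation $\ll f(y)$ from $\hat{s'_y}$ to the fixed section $\hat{s}_{n+1}$. In $(ii)\Rightarrow(i)$ I would circumvent it by turning the pointwise inequality $\hat{s'}(x)\le\hat{t}(x)$ coming from $(ii)$ into an inequality $\hat{s''}(y)\le\hat{t}(y)$ valid on a whole neighbourhood, via Lemma \ref{limit}(i), after interpolating $r\ll s''\ll s'$ so that $s''$ can serve as the uniform witness. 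Throughout, one uses only the standard facts that each stalk map $\pi_x$ is a morphism in $\Cu$ (hence preserves both $\le$ and $\ll$), that every element of a semigroup in $\Cu$ is the supremum of a rapidly increasing sequence, and that one may therefore interpolate strictly inside any relation $r\ll s'$.
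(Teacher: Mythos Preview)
Your proof is correct and follows essentially the same approach as the paper's own argument. In both directions you use the same key ingredients---Lemma \ref{limit} to spread a pointwise inequality to a neighbourhood after interpolating a $\ll$-step, the rapidly increasing approximation of $f(x)$ from Remark \ref{rem:surjective}, and the subbasic sets $U_r^{\gg}$---with only cosmetic differences (you take $U=X$ in $(i)\Rightarrow(ii)$ where the paper uses an arbitrary open $U$, and you name the interpolated element $s''$ where the paper calls it $r''$).
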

\begin{proof}
 Let $f\colon X\to F_\mathcal{S}$ be a section satisfying \rm(ii) and consider
an open set of the form ${U}_{r}^{\gg}$ for some open set $U\subseteq X$ and $r\in S$. 
Then \[ \begin{array}{rl} f^{-1}(U_{r}^{\gg})& =\{y\in X\mid f(y)\gg \hat{r'}(y)\text{ for some } y\in U\text{ and for some }r'\gg r\} \\ & =\{y\in U\mid f(y)\gg \hat{r'}(y)\text{ for some }r'\gg r\}.\end{array}\]

For each $y$ in the above set there exists $r'\gg r$
such that $\hat{r'}(y)\ll f(y)$. Using property \rm(ii) there
exists $s\in S$ such that $\hat{r'}(y)\ll \hat{s}(y)
\ll f(y)$ and $\hat{s}(x)\ll f(x)$ for all $x\in \mathring{V}$ where $V$ is a closed set of $X$. 
Furthermore, we can find $r''\in S$ such that $r\ll r''\ll r'$, and use Lemma \ref{limit} to conclude that $\hat{r''}(z)\ll \hat s(z) \ll f(z)$ for all $z$ in an open set $W\subseteq X$,
proving that $f^{-1}(U_{r}^{\gg})$ is open. Therefore $f$ is continuous.

Now, let $f\colon X\to F_{\mathcal{S}}$ be continuous,
 $x\in X$ and $a_{x}\ll f(x)$. Using Remark \ref{rem:surjective}, we can write $f(x)=\sup(\hat{s}_{n}(x))$ where $(s_n)$ is a rapidly increasing sequence in $S$, and hence we can find $s\ll s'\in S$ such that
$$a_{x}\ll \hat{s}(x)\ll \hat{s'}(x)\ll f(x),$$ where $s,s'\in S$.

Let $U$ be any open neighborhood of $x$, and consider the open set $f^{-1}(U_{s}^{\gg})$. Note that it contains $x$ and that for any $z\in f^{-1}(U_s^{\gg})$, we have $f(z)\gg \hat{t}(z)$ for some $t\gg s$.
Hence, for any closed set $V$ contained in $f^{-1}(U_s^{\gg})$ such that $x\in \mathring V$, we have $f(z)\gg \hat{s}(z)$ for all $z\in V$. Thus, condition (ii) holds.
\end{proof}

Let $X$ be a compact Hausdorff space and let $\mathcal{S} $ be a continuous
presheaf on $\Cu$. We will denote the set of \emph{continuous sections of the space}
$F_S$ by $\Gamma(X,F_S)$, which is equipped with pointwise order and
addition. Notice that there is an order-embedding 
\[
\Gamma(X,F_S)\to \prod_{x\in
X} S_x
\]
(given by $f\mapsto (f(x))_{x\in X}$).

\begin{definition}
Let $X$ be a compact Hausdorff space. We say that a
$\mathrm{C}(X)$-algebra $A$ has no $K_1$ obstructions provided that, for all $x\in X$,
the fiber $A_x$ has stable rank one and $K_1(I)=0$ for any closed two-sided ideal of
$A_x$.
\end{definition}

The class just defined was already
considered, although not quite with this terminology, in \cite{APS}, where various aspects of the Cuntz semigroup of these
algebras were examined. We combine some of the ideas from \cite{APS} to prove the results below,
which are a first step towards the computation of the Cuntz semigroup of $\mathrm{C}(X)$-algebras without $K_1$ obstructions.

\begin{theorem}\label{inj.}
Let $X$ be a one dimensional compact Hausdorff space and $A$ be a $\mathrm{C}(X)$-algebra
without $K_1$ obstructions. Then, the map
$$\alpha\colon \Cu(A)\to \prod_{x\in X} \Cu(A_x),$$ given by
$\alpha[a]=([a(x)])_{x\in X}$ is an order embedding. In particular, $\alpha$ 
defines an order embedding
\[
\Cu(A)\to \Gamma(X,F_{\Cu(A)})\,.
\]
\end{theorem}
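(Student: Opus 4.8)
The plan is to establish the key assertion—that $\alpha[a] \le \alpha[b]$ forces $[a] \le [b]$—and then deduce that the image lands in $\Gamma(X,F_{\Cu(A)})$ more or less for free. The starting point is Proposition~\ref{here}(iii): to show $[a] \le [b]$ it suffices to show that for every $\epsilon > 0$ there is $\delta > 0$ with $(a-\epsilon)_+ \preceq (b-\delta)_+$ in $A\otimes\cK$. So I would fix $\epsilon>0$ and work with $(a-\epsilon)_+$, which has the crucial advantage that its ``support'' along $X$ is contained in a \emph{closed} set, namely $\{x : \|a(x)\| \ge \epsilon\}$ (using upper semicontinuity of $x\mapsto \|a(x)\|$ and the fact that $A$, hence $A\otimes\cK$, is a $\C(X)$-algebra); outside a slightly larger open set $(a-\epsilon)_+$ is norm-small. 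This is the standard device for reducing statements about $\C(X)$-algebras to local, fiberwise statements.

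The heart of the argument is local-to-global. For each $x\in X$ we have, by hypothesis, $[a(x)] \le [b(x)]$ in $\Cu(A_x)$. Now I would invoke the fiberwise structure: since $A_x$ has stable rank one and no $K_1$ obstruction in any ideal, the comparison $[a(x)] \le [b(x)]$ can be implemented constructively, and moreover (this is exactly the kind of estimate developed in \cite{APS} for fibers of such algebras) given $\epsilon$ there is $\delta_x > 0$ with $(a(x)-\epsilon)_+ \preceq (b(x)-\delta_x)_+$ in $A_x\otimes\cK$, witnessed by an element $r_x$ with $r_x (b(x)-\delta_x)_+ r_x^* \approx (a(x)-\epsilon)_+$ up to some prescribed tolerance. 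Lift $r_x$ to $r\in A\otimes\cK$; by upper semicontinuity of norms and continuity of the algebra operations, the approximate relation $r (b-\delta_x)_+ r^* \approx (a-\epsilon)_+$ then persists on a closed neighborhood $V_x$ with $x\in \mathring V_x$. By compactness of $X$, finitely many interiors $\mathring V_{x_1},\dots,\mathring V_{x_n}$ cover $X$; taking $\delta = \min_i \delta_{x_i}$ and patching the local subequivalences using a partition of unity subordinate to the $\mathring V_{x_i}$ (the module structure of the $\C(X)$-algebra lets us multiply elements of $A\otimes\cK$ by scalar functions), one produces a single $r\in A\otimes\cK$ with $r(b-\delta)_+ r^* \approx (a-2\epsilon)_+$ globally, up to an arbitrarily small error — which, after adjusting $\epsilon$, gives $(a-\epsilon')_+ \preceq (b-\delta)_+$ in $A\otimes\cK$ and hence $[a]\le[b]$. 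The reverse implication $[a]\le[b] \Rightarrow \alpha[a]\le\alpha[b]$ is immediate from functoriality of $\Cu$ applied to $\pi_x$. That $\alpha$ is additive and order-preserving is routine, so $\alpha$ is an order embedding into $\prod_{x} \Cu(A_x)$.

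For the ``in particular'' clause, I need only check that each induced tuple $\alpha[a] = ([a(x)])_x$ is in fact a \emph{continuous} section of $F_{\Cu(A)}$; but this is precisely the content of the discussion preceding Remark~\ref{rem:surjective}, where it was shown that every section $\widehat{s}$ induced by an element $s$ of the global semigroup $S = \Cu(A) = \Cu_A(X)$ is continuous for the chosen topology on $F_{\Cu(A)}$. Since the presheaf $\Cu_A$ is surjective (all restriction maps $\pi_V^U$ are surjective, as the quotient maps of $\C(X)$-algebras are), the hypotheses of that discussion apply, so $\alpha[a] = \widehat{[a]} \in \Gamma(X, F_{\Cu(A)})$, and the restriction of $\alpha$ to $\Gamma(X,F_{\Cu(A)})$ remains an order embedding because $\Gamma(X,F_{\Cu(A)}) \to \prod_x \Cu(A_x)$ is itself an order embedding.

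I expect the main obstacle to be the patching step: turning finitely many local Cuntz-subequivalences $(a-\epsilon)_+ \preceq (b-\delta_{x_i})_+$ over the sets $V_{x_i}$ into a \emph{single} global one over $X$. The difficulty is that Cuntz subequivalence is not obviously ``local'' in a way that glues along a cover — one cannot just add the local implementers $r_{x_i}$. The one-dimensionality of $X$ is what rescues this: a finite closed cover of a one-dimensional compact metric space can be refined so that the nerve is one-dimensional, i.e. the patching happens essentially over overlaps of only two pieces at a time, and the stable-rank-one / trivial-$K_1$ hypotheses on the fibers (and the ensuing good behavior of unitaries in $\Cu(A_x)$, cf.\ Proposition~\ref{here}(iv)) are exactly what is needed to reconcile two local implementers on a two-fold overlap. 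This is where the pullback/gluing machinery of \cite{APS} enters in an essential way, and I would set up the induction on the cover accordingly rather than attempting a direct partition-of-unity argument.
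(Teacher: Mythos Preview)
Your proposal identifies all the right ingredients and lands on essentially the same approach as the paper: obtain local approximate implementers $d_i$ on a finite cover, refine using one-dimensionality so that the closures have multiplicity at most two, and then invoke the pullback machinery of \cite{APS}. The main difference is in the execution of the gluing step. You first sketch a direct partition-of-unity patching, correctly note that it fails because of cross terms, and then propose an ``induction on the cover'' using \cite{APS}. The paper avoids any induction by a single clever decomposition: it writes $X = V \cup \overline{U}$ where $V = X \setminus \bigcup_{i\neq j}(U_i\cap U_j)$ and $U = \bigcup_{i\neq j}(U_i\cap U_j)$. On $V$ the naive combination $d = \sum_i \lambda_i d_i$ \emph{does} work, because the $\lambda_i$ can be chosen so that at each point of $V$ only one term survives; on $\overline{U}$ a second combination $c = \sum_{i<j}\alpha_{i,j} d_i$ works for the same reason. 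This yields $\pi_V((a-\epsilon)_+)\preceq \pi_V(b)$ and $\pi_{\overline{U}}((a-\epsilon)_+)\preceq \pi_{\overline{U}}(b)$, and then a single application of \cite[Theorem~3.2]{APS} to the pullback $A \cong A(V)\oplus_{A(V\cap\overline{U})} A(\overline{U})$ finishes. So your approach would succeed, but the paper's two-piece split is sharper than an iterated pullback argument and makes the role of multiplicity two completely transparent. One minor point: you do not need the extra $\delta_x$ on the $b$ side; working directly with $b$ (rather than $(b-\delta_x)_+$) is enough, since the goal is only $(a-\epsilon)_+ \preceq b$ for every $\epsilon$.
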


\begin{proof}
By our assumptions on $A$ and its fibers, we may assume that $A$ is stable.

Let $0<\epsilon<1$ be fixed, and let us suppose that $a,b\in A$ are positive contractions such that $a(x)\preceq b(x)$ for all $x\in X$.
Then, by the definition of the Cuntz order, since $A_x$ is a quotient of $A$
for each $x\in X$, there exists $d_x\in A$ such that
$$\|a(x)-d_x(x)b(x)d_x^*(x) \|<\epsilon.$$
By upper semicontinuity of the norm, the above inequality also holds in a neighborhood of $x$. Hence, since $X$ is a compact set, there exists a
finite cover of $X$, say $\{U_{i}\}^{n}_{i=1}$, and elements $(d_{i})^{n}_{i=1}\in A$ such that
$\|a(x)-d_{i}(x)b(x)d_{i}^*(x)\|<\epsilon$, for all $x\in \overline{U_{i}}$ and $1\leq i\leq n$. 
As $X$ is one dimensional, we may assume that $\{U_{i}\}$ and $\{\overline{U_i}\}$ have multiplicity at most two. 

Choose, by Urysohn's Lemma, functions $\lambda_i$ that are $1$ in the closed sets $U_i\setminus(\bigcup_{j\neq i}U_j)$ and $0$ in $U_i^c$.
Using these functions we define
$d(x)=\sum^{n}_{i=1}\lambda_{i}(x)d_{i}(x)$. Set $V=X\setminus (\bigcup_{i\neq j}(U_{i}\cap U_{j}))$ which is a closed set, and it 
is easy to check that $d$ satisfies \begin{equation}\label{V} \|a(x)-d(x)b(x)d^*(x)\|<\epsilon \end{equation}
for all $x\in V$.

Again,  choose for $i<j$ functions $\alpha_{i, j}$ such that $\alpha_{i,j}$ is one on $\overline{U_i\cap U_j}$ and zero on $\overline{U_k\cap U_l}$ whenever $\{k,l\}\neq\{i,j\}$. 
 We define $c(x)=\sum_{i<j}\alpha_{i,j}(x)d_{i}(x)$, put $U= (\bigcup_{i\neq j}(U_{i}\cap U_{j}))=V^c$ and notice that $c$ satisfies
\begin{equation}\label{U} \|a(x)-c(x)b(x)c^*(x)\|<\epsilon\end{equation} for all $x\in\overline{U}$. % Observe furthermore that $\overline{U}\cap V=\partial U=\partial V$ which is zero dimensional.

Now, by \cite[Lemma 2.2]{RK}, equations \eqref{V} and \eqref{U}, and taking into account that the norm of an element is computed fiberwise (\cite{Blan}), we have that 
\[
\pi_V((a-\epsilon)_+)\preceq \pi_V(b) \text{ and } \pi_{\overline U}((a-\epsilon)_+)\preceq \pi_{\overline U}(b)\,.
\]
Therefore
\[
([\pi_V(a-\epsilon)_+)],[\pi_{\overline U}(a-\epsilon)_+)])\leq ([\pi_V(b)],[\pi_{\overline U}(b)])
\]
in the pullback semigroup $\Cu(A(V))\oplus_{\Cu(A(\overline U\cap V)}\Cu(A(\overline U))$.
Since $A$ can also be written as the pullback $A=A(V)\oplus_{A(\overline{U}\cap V)}A(\overline{U})$ along the natural restriction maps (see \cite[Lemma 2.4]{Da}, and also \cite[Proposition 10.1.13]{Dixmier}), we can apply  \cite[Theorem 3.2]{APS}, to conclude that 
$(a-\epsilon)_+\preceq b$. Thus $a\preceq b$, and the result follows.
\end{proof}

\begin{corollary}\label{thm:sheaf}
Let $X$ be a one dimensional compact Hausdorff space, and let $A$ be a
$\mathrm{C}(X)$-algebra without $K_1$ obstructions. Then, $\Cu_A\colon \mathcal{V}_X\to\Cu$, $U\mapsto\Cu(A(U))$, is a surjective continuous sheaf.
\end{corollary}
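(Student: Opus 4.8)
The plan is as follows. It has already been recorded in the discussion above that, for any $\mathrm{C}(X)$-algebra $A$, the presheaf $\Cu_A$ is continuous and surjective (the restriction maps $\pi_V^U\colon A(U)\to A(V)$ are quotient maps, and $\Cu$ sends surjections to surjections), and since the notion of continuity for a sheaf is literally the same as for a presheaf, it remains only to establish the sheaf axiom. So fix $V,V'\in\mathcal{V}_X$ with $V\cap V'\in\mathcal{V}_X$ and put $W=V\cup V'$. Then $W\in\mathcal{V}_X$, it is one dimensional as a closed subspace of $X$, and $A(W)$ is a $\mathrm{C}(W)$-algebra without $K_1$ obstructions: for every closed $Z\subseteq W$ one has $A(W)(Z)=A(Z)$ with matching restriction maps (iterate quotients, $\pi_Z=\pi_Z^W\circ\pi_W$ and in particular $\pi_x=\pi_{\{x\}}^W\circ\pi_W$), so the fibre of $A(W)$ at any $x\in W$ is $A_x$.

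For injectivity of the comparison map $\pi_V^{W}\times\pi_{V'}^{W}$, I would invoke Theorem \ref{inj.} for the $\mathrm{C}(W)$-algebra $A(W)$. If $s,t\in\Cu(A(W))$ satisfy $\pi_V^{W}(s)=\pi_V^{W}(t)$ and $\pi_{V'}^{W}(s)=\pi_{V'}^{W}(t)$, then for every $y\in W$ the evaluation $\pi_y$ on $\Cu(A(W))$ factors through $\pi_V^{W}$ or through $\pi_{V'}^{W}$, so $\hat s(y)=\hat t(y)$ for all $y\in W$; the order embedding $\Cu(A(W))\to\prod_{y\in W}\Cu(A_y)$ of Theorem \ref{inj.} then forces $s=t$.

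For surjectivity, the key step is the pullback presentation
\[
A(W)=A(V)\oplus_{A(V\cap V')}A(V')
\]
along the natural restriction maps, which is exactly the $\mathrm{C}(X)$-algebra gluing used in the proof of Theorem \ref{inj.} (via \cite[Lemma 2.4]{Da} and \cite[Proposition 10.1.13]{Dixmier}), combined with the identifications $A(W)(V)=A(V)$, $A(W)(V')=A(V')$ and $A(W)(V\cap V')=A(V\cap V')$. Given a compatible pair $(f,g)$, that is, $f\in\Cu(A(V))$ and $g\in\Cu(A(V'))$ with $\pi_{V\cap V'}^{V}(f)=\pi_{V\cap V'}^{V'}(g)$, one applies \cite[Theorem 3.2]{APS} to this pullback — whose base algebra $A(V\cap V')$ sits over the one dimensional space $V\cap V'\in\mathcal{V}_X$ and all of whose fibres lack $K_1$ obstructions — to produce $s\in\Cu(A(W))$ with $\pi_V^{W}(s)=f$ and $\pi_{V'}^{W}(s)=g$; concretely, lift representatives of $f$ and $g$ using surjectivity of the restriction maps and patch them along $A(V\cap V')$ using the pullback structure. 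Together with the injectivity above, this gives the required bijection, so $\Cu_A$ is a surjective continuous sheaf.

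The main obstacle is exactly this surjectivity step: one needs to be confident that the hypotheses of \cite[Theorem 3.2]{APS} really do apply for an arbitrary pair of closed subsets of a one dimensional space — in particular that $A(W)$ is genuinely presented by the decomposition above as a pullback with the correct restriction maps, and that $\Cu$ carries that pullback of $\mathrm{C}(X)$-algebras without $K_1$ obstructions to the corresponding pullback of objects in $\Cu$. Once that is in hand, everything else is bookkeeping with iterated quotients and an appeal to Theorem \ref{inj.}.
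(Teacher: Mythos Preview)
Your strategy is exactly the paper's: reduce to $W=V\cup V'$, use the pullback presentation $A(W)\cong A(V)\oplus_{A(V\cap V')}A(V')$, and then invoke the results of \cite{APS}. Your injectivity argument (factoring $\pi_y$ through $\pi_V^W$ or $\pi_{V'}^W$ and appealing to Theorem~\ref{inj.}) is correct and in fact slightly more direct than the paper's, which instead cites \cite[Theorem~3.2]{APS} for the order-embedding of $\Cu(A(W))$ into the pullback semigroup.

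The one point to fix is the surjectivity step, where you have the citations reversed. In \cite{APS}, Theorem~3.2 is the order-embedding result (it does not produce preimages), while surjectivity of
\[
\Cu(A(W))\longrightarrow \Cu(A(V))\oplus_{\Cu(A(V\cap V'))}\Cu(A(V'))
\]
is \cite[Theorem~3.3]{APS}. Moreover, that surjectivity theorem takes as a hypothesis precisely the order-embedding $\Cu(A(W))\hookrightarrow\prod_{x\in W}\Cu(A_x)$ supplied by Theorem~\ref{inj.}; this is why the paper first invokes Theorem~\ref{inj.} and only then applies \cite[Theorem~3.3]{APS}. So your instinct that ``the main obstacle is exactly this surjectivity step'' is right, but the resolution is: Theorem~\ref{inj.} feeds into \cite[Theorem~3.3]{APS}, not \cite[Theorem~3.2]{APS}. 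With that correction your argument matches the paper's.
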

\begin{proof}
We know already that $\Cu_A$ is a surjective continuous presheaf.
 Let $U$ and $V\in\mathcal{V}_X$ be such that $U\cap V\in\mathcal{V}_X$. 
Let $W=U\cup V$. 
We know then that $A(W)$ is isomorphic to the pullback $A(U)\oplus_{A(U\cap V)}A(V)$. 
Since $A(W)$ is a $\mathrm{C}(W)$-algebra without $K_1$ obstructions, 
we may apply Theorem \ref{inj.} to conclude that the map 
$\Cu(A(W))\to\prod_{x\in W}\Cu(A_x)$ (given by $[a]\mapsto ([a(x)])$) is an order-embedding.
 Then \cite[Theorem 3.3]{APS} implies that the natural map $\Cu(A(W))\to\Cu(A(U))\oplus_{\Cu(A(U\cap V))}\Cu(A(V))$ is surjective.
 Since it is also an order-embedding, by \cite[Theorem 3.2]{APS}, we obtain that it is an isomorphism.

\end{proof}

\section{Piecewise characteristic sections}

In this section we will show that, under additional assumptions, the map in
Theorem \ref{inj.} is also surjective, proving that there exists an isomorphism in
the category $\Cu$ between $\Cu(A)$ and $\Gamma(X,F_{\Cu(A)})$. 

Recall that, if $s\ll r\in S$, then $\pi_x(s)=\hat{s}(x)\ll
\hat{r}(x)=\pi_x(r)$ for all $x$. This comes from the fact that the induced
maps belong to the category $\Cu$, and so they preserve the compact
containment relation. We continue to assume throughout that $X$ is a compact Hausdorff space, which is also second countable. We shall use below $\partial (U)$ to denote the \emph{boundary} of a set $U$, that is, $\partial(U)=\overline{U}\setminus\mathring{U}$.

\begin{lemma}
\label{lem:suprema}
Let $\mathcal{S}\colon\mathcal{V}_X\to\Cu$ be a surjective presheaf of semigroups on $X$. 
\begin{enumerate}[{\rm (i)}]
\item
Let $f$, $g\in\Gamma(X,F_S)$, and $V$ a closed subset of $X$ such that
$f(y)\ll g(y)$ for all $y\in V$. Put
\[
g_{V,f}(x)=\left\{\begin{array}{cc} g(x) & \text{ if } x\notin V\\
                   f(x) & \text{ if } x\in V
                  \end{array}\right.
\]
Then $g_{V,f}\in\Gamma(X,F_S)$.
\item
If $g\in\Gamma(X,F_S)$ and $x\in X$, there exist a decreasing sequence $(V_n)$ of
closed sets (with $x\in\mathring{V}_n$ for all $n$) and a rapidly increasing
sequence $(s_n)$ in $S$ such that $g=\sup_n g_{V_n,s_n}$.
\end{enumerate}
\end{lemma}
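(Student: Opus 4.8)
The plan is to deduce both parts from the characterisation of continuity in Proposition~\ref{continuity} together with the localisation of inequalities in Lemma~\ref{limit}. The recurring technical point is that Lemma~\ref{limit}(i) only propagates an inequality $\hat{\sigma}(x)\le\hat{r}(x)$ to a neighbourhood of $x$ after replacing $\sigma$ by some $\sigma'\ll\sigma$ \emph{in} $S$; since Proposition~\ref{continuity}(ii) simultaneously requires a lower bound $a_x\ll\hat{\sigma'}(x)$, I will always produce the pair $\sigma'\ll\sigma$ as two consecutive terms of a rapidly increasing sequence in $S$ whose supremum has the prescribed image in the relevant fibre (such a sequence exists because $\mathcal{S}$ is surjective, cf.\ Remark~\ref{rem:surjective}).

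For (i) I verify condition (ii) of Proposition~\ref{continuity} for $g_{V,f}$. Fix $x\in X$ and $a_x\ll g_{V,f}(x)$. If $x\notin V$, then $g_{V,f}(x)=g(x)$ and continuity of $g$ gives a closed $W$ with $x\in\mathring{W}$ and $s\in S$ such that $a_x\ll\hat{s}(x)$ and $\hat{s}(y)\ll g(y)$ on $W$; since $V$ is closed and $X$ is metrizable I may shrink $W$ to a closed neighbourhood of $x$ disjoint from $V$, on which $g_{V,f}$ and $g$ agree, so condition (ii) holds. If $x\in V$, then $g_{V,f}(x)=f(x)$ and $a_x\ll f(x)$. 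Writing $f(x)=\sup_n\hat{t}_n(x)$ for a rapidly increasing sequence $(t_n)$ in $S$ and choosing $k$ with $a_x\ll\hat{t}_k(x)$, put $\sigma'=t_k$ and $\sigma=t_{k+1}$, so $\sigma'\ll\sigma$ in $S$, $a_x\ll\hat{\sigma'}(x)$ and $\hat{\sigma}(x)\ll f(x)$. As $x\in V$ we have $f(x)\ll g(x)$, hence also $\hat{\sigma}(x)\ll g(x)$; applying Proposition~\ref{continuity}(ii) to $f$ and to $g$ at $x$ (with $\hat{\sigma}(x)$ playing the role of $a_x$) yields closed neighbourhoods of $x$ and elements $\mu,\nu\in S$ with $\hat{\mu}(x)\ge\hat{\sigma}(x)$, $\hat{\nu}(x)\ge\hat{\sigma}(x)$, and $\hat{\mu}\ll f$, $\hat{\nu}\ll g$ near $x$. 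Since $\hat{\sigma}(x)\le\hat{\mu}(x)$ and $\hat{\sigma}(x)\le\hat{\nu}(x)$, two applications of Lemma~\ref{limit}(i), each with $\sigma'\ll\sigma$, produce a single closed neighbourhood $W$ of $x$ on which simultaneously $\hat{\sigma'}\le\hat{\mu}\ll f$ and $\hat{\sigma'}\le\hat{\nu}\ll g$. Thus $\hat{\sigma'}(y)\ll g_{V,f}(y)$ for every $y\in W$, whether or not $y\in V$, while $a_x\ll\hat{\sigma'}(x)$; this is condition (ii), so $g_{V,f}\in\Gamma(X,F_S)$.

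For (ii), write $g(x)=\sup_n\hat{s}_n(x)$ for a rapidly increasing sequence $(s_n)$ in $S$ (Remark~\ref{rem:surjective}). For each $n$ we have $\hat{s}_{n+1}(x)\ll g(x)$, so Proposition~\ref{continuity}(ii) applied to $g$ gives a closed neighbourhood of $x$ and an element $t_n\in S$ with $\hat{t}_n(x)\ge\hat{s}_{n+1}(x)$ and $\hat{t}_n\ll g$ near $x$; since $s_n\ll s_{n+1}$ in $S$ and $\hat{s}_{n+1}(x)\le\hat{t}_n(x)$, Lemma~\ref{limit}(i) produces a closed set $V_n$ with $x\in\mathring{V}_n$ on which $\hat{s}_n\le\hat{t}_n\ll g$, so $\hat{s}_n(y)\ll g(y)$ for all $y\in V_n$. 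Replacing $V_n$ by $\bigcap_{m\le n}V_m\cap\overline{B}(x,1/n)$ (using that $X$ is metrizable) we may assume $(V_n)$ is decreasing, $x\in\mathring{V}_n$, $\bigcap_nV_n=\{x\}$, and still $\hat{s}_n(y)\ll g(y)$ for $y\in V_n$. By part (i), each $g_{V_n,s_n}$ belongs to $\Gamma(X,F_S)$. Distinguishing the cases $y\in V_{n+1}$, $y\in V_n\setminus V_{n+1}$ and $y\notin V_n$, and using $s_n\ll s_{n+1}$ together with $\hat{s}_n\ll g$ on $V_n$, one checks pointwise that $(g_{V_n,s_n})_n$ is increasing. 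Finally, if $y\ne x$ then $y\notin V_n$ for all large $n$, so $g_{V_n,s_n}(y)=g(y)$ eventually, while $g_{V_n,s_n}(x)=\hat{s}_n(x)$ has supremum $g(x)$; hence $g=\sup_n g_{V_n,s_n}$.

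The step I expect to be the main obstacle is the one highlighted at the outset: after invoking Lemma~\ref{limit}(i) (twice in part (i), once in part (ii)) one is left with the smaller element $\sigma'\ll\sigma$, and one must still know that $\hat{\sigma'}(x)$ dominates the prescribed $a_x$ in part (i) (respectively, that $\hat{s}_n(x)$ is a term of a rapidly increasing sequence with supremum $g(x)$ in part (ii)). This is precisely why $\sigma'$ and $\sigma$ (respectively $s_n$ and $s_{n+1}$) must be chosen inside a rapidly increasing sequence in $S$ rather than be read off directly from Proposition~\ref{continuity}(ii). The remaining ingredients---shrinking a neighbourhood away from a closed set, intersecting with metric balls to force $\bigcap_n V_n=\{x\}$, and the pointwise monotonicity check in part (ii)---are routine.
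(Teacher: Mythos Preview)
Your proof is correct and follows essentially the same route as the paper's: both parts rest on Proposition~\ref{continuity}(ii) combined with Lemma~\ref{limit}(i), using a rapidly increasing sequence in $S$ to manufacture the auxiliary pair $\sigma'\ll\sigma$ before propagating the fibrewise inequality to a neighbourhood. The only organisational difference is that in (i) the paper reduces to $x\in\partial(V)$ (interior and exterior points being handled directly by continuity of $f$ and $g$), whereas you treat all of $x\in V$ uniformly; your argument does a little more work at interior points but is equally valid, and in (ii) you are simply more explicit than the paper about the monotonicity of $(g_{V_n,s_n})$ and the intersection $\bigcap_n V_n=\{x\}$.
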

\begin{proof}
(i): Using the fact that both $f$ and $g$ are continuous, it is enough to check that condition (ii) in Proposition \ref{continuity} is verified for $x\in\partial(V)$. 
Thus, let $a_x$ be such that $a_x\ll g_{V,f}(x)=f(x)\ll g(x)$. By continuity of $f$, there is a closed subset $U$ with $x\in \mathring{U}$ and $s\in S$ such that $a_x\ll\hat{s}(x)$
and $\hat{s}(y)\ll f(y)$ for all $y\in U$. As $s$ is a supremum of a rapidly increasing sequence, we may find $s'\ll s$ with $a_x\ll \hat{s'}(x)$.

Next, as $g$ is also continuous, there are $t\in S$ and a closed set $U'$ with $x\in\mathring{U'}$ such that $f(x)\ll \hat{t}(x)$ and $\hat{t}(y)\ll g(y)$ for all $y\in U'$.
Since $\hat s(x)\ll \hat t(x)$ and $s'\ll s$, we now use Lemma \ref{limit} to find $W$ with $\hat{s'}(y)\ll \hat t(y)$ for all $y\in W$. Now condition
 (ii) in Proposition \ref{continuity} is verified using the induced section $s'$ and the closed set $U\cap U'\cap W$.

(ii): Write $g(x)=\sup_n \hat{s}_n(x)$, where $(s_n)$ is a rapidly increasing
sequence in $S$ (see Remark \ref{rem:surjective}). 

Since $s_1\ll s_2$ and $g$ is continuous, condition (ii) of Proposition
\ref{continuity} applied to $\hat{s}_2(x)\ll g(x)$ yields $t\in S$ and a closed
set $U_1$ whose interior contains $x$ such that $\hat{s}_2(x)\ll\hat{t}(x)$ and
$\hat{t}(y)\ll g(y)$ for all $y\in V_1$. We now apply Lemma \ref{limit}, so
that there is another closed set $U_1'$ (with $x\in\mathring{U_1'}$) so that
$\hat{s}_1(y)\ll\hat{t}(y)$ for any $y\in U_1'$. Let $V_1=U_1\cap U_1'$ and for
each $y\in V_1$, we have $\hat{s}_1(y)\ll \hat{t}(y)\ll g(y)$. Continue in
this way with the rest of the $s_n$'s, and notice that we can choose the sequence $(V_n)$ in such a way that $\cap V_n=\{x\}$.
\end{proof}

Using the previous lemma we can describe compact containment in
$\Gamma(X,F_{\Cu(A)})$.

\begin{proposition}\label{ll}
Let $\mathcal{S}\colon\mathcal{V}_X\to \Cu$
be a surjective presheaf of semigroups on $X$. For $f,g$ in $\Gamma(X,F_{\Cu(A)})$, the
following statements are equivalent:
\begin{enumerate}[\rm(i)]
 \item $f\ll g$.
 \item For all $x\in X$ there exists $a_{x}$ with $f(x)\ll a_{x}\ll
g(x)$ and such that if $s\in S$ satisfies $a_{x}\ll \hat{s}(x)$ and
$\hat{s}(y)\ll g(y)$ for $y$ in a closed set $U$ whose interior contains $x$,
then there exists a closed set $V\subseteq U$ with $x\in \mathring{V}$
and $f(y)\leq \hat{s}(y)\leq g(y)$ for all $y\in V$.
\end{enumerate}
\end{proposition}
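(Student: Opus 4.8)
The plan is to prove the two implications separately, using throughout that the order-embedding $\Gamma(X,F_{\mathcal S})\hookrightarrow\prod_{x\in X}S_x$ identifies suprema of increasing sequences with pointwise suprema (one checks, via Proposition \ref{continuity}, that a pointwise supremum of an increasing sequence of continuous sections is again continuous).

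\emph{(i)$\Rightarrow$(ii).} Fix $x\in X$ and apply Lemma \ref{lem:suprema}(ii) to write $g=\sup_n g_{V_n,s_n}$, where $(V_n)$ is a decreasing sequence of closed sets with $\bigcap_n V_n=\{x\}$, $(s_n)$ is rapidly increasing in $S$, and $\hat s_n(y)\ll g(y)$ for $y\in V_n$. The sequence $(g_{V_n,s_n})$ is increasing with supremum $g$, so $f\ll g$ yields $m$ with $f\le g_{V_m,s_m}$. I would then set $a_x:=\hat s_{m+1}(x)$: since $x\in V_m$ one has $f(x)\le g_{V_m,s_m}(x)=\hat s_m(x)\ll\hat s_{m+1}(x)=a_x$, and $a_x=\hat s_{m+1}(x)\ll\hat s_{m+2}(x)\le\sup_n\hat s_n(x)=g(x)$; hence $f(x)\ll a_x\ll g(x)$. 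For the test condition, suppose $s\in S$ satisfies $a_x\ll\hat s(x)$ and $\hat s(y)\ll g(y)$ for $y$ in a closed set $U$ with $x\in\mathring U$. Applying Lemma \ref{limit}(i) to the inequality $\hat s_{m+1}(x)\le\hat s(x)$ (with $s_m\ll s_{m+1}$) produces a closed set $V'$, $x\in\mathring{V'}$, with $\hat s_m(y)\le\hat s(y)$ on $V'$; on the closed neighbourhood $V:=U\cap V_m\cap V'\subseteq U$ of $x$ we then have $f(y)\le\hat s_m(y)\le\hat s(y)\le g(y)$, as required.

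\emph{(ii)$\Rightarrow$(i).} Let $(h_n)$ be an increasing sequence in $\Gamma(X,F_{\mathcal S})$ with $g\le\sup_n h_n$, computed pointwise. Fix $x$ and take $a_x$ as in (ii). Since $a_x\ll g(x)$ and $g$ is continuous, Proposition \ref{continuity}(ii) gives $u_x\in S$ and a closed neighbourhood $U_x$ of $x$ with $a_x\ll\hat u_x(x)$ and $\hat u_x(y)\ll g(y)$ on $U_x$. As $\ll$ interpolates in $\Cu$-semigroups, $\hat u_x(x)\ll g(x)\le\sup_n h_n(x)$ yields an index $N_x$ with $\hat u_x(x)\ll h_{N_x}(x)$. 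Writing $u_x$ as the supremum of a rapidly increasing sequence in $S$, I would choose a predecessor $u_x''\ll u_x$ large enough that $a_x\ll\hat u_x''(x)$ still holds; then $\hat u_x''(y)\ll g(y)$ on $U_x$ as well, and $h_{N_x}(x)$ lies in the basic open set $X_{u_x''}^{\gg}$ (witnessed by $u_x$, since $u_x''\ll u_x$ and $\hat u_x(x)\ll h_{N_x}(x)$). By continuity of $h_{N_x}$ there is a neighbourhood $O$ of $x$ with $h_{N_x}(y)\in X_{u_x''}^{\gg}$ for all $y\in O$, and therefore $\hat u_x''(y)\le h_{N_x}(y)$ on $O$. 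Feeding $s=u_x''$ into the test in (ii) yields a closed neighbourhood $V_x\subseteq U_x$ of $x$ with $f(y)\le\hat u_x''(y)\le g(y)$ on $V_x$; shrinking, we get a closed neighbourhood $W_x$ of $x$ contained in $V_x\cap O$, on which $f\le h_{N_x}$. By compactness finitely many $\mathring W_{x_1},\dots,\mathring W_{x_k}$ cover $X$; putting $N:=\max_i N_{x_i}$ and using monotonicity of $(h_n)$ gives $f\le h_N$ everywhere, so $f\ll g$.

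The main obstacle is the balancing act in the second implication: to apply the test in (ii) one must produce a single $s\in S$ that is at once large enough at $x$, dominated by $g$ on a neighbourhood of $x$, and dominated by some $h_N$ on a neighbourhood of $x$ — even though $g(x)$ and $h_N(x)$ need not be comparable. The resolution is to build $s=u_x$ controlled by $g$ via Proposition \ref{continuity}, to read off a comparison with a single $h_{N_x}$ \emph{at the point $x$} for free from $\hat u_x(x)\ll g(x)\le\sup_n h_n(x)$, and then to promote this one-point comparison to a neighbourhood by passing to a predecessor $u_x''\ll u_x$ and invoking the non-standard topology placed on $F_{\mathcal S}$. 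The remaining ingredients — interpolation of the relation $\ll$ and compactness of $X$ — are routine.
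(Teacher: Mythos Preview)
Your proof is correct and follows essentially the same approach as the paper. The direction (i)$\Rightarrow$(ii) is virtually identical (Lemma~\ref{lem:suprema}(ii), pick $a_x=\hat s_{m+1}(x)$, then Lemma~\ref{limit}). For (ii)$\Rightarrow$(i) the structure is also the same---manufacture an $s\in S$ to feed into the test of (ii), obtain $f\le\hat s$ on a closed neighbourhood, separately get $\hat s\le h_N$ on another neighbourhood, intersect and cover by compactness---the only difference being bookkeeping: the paper reinvokes Lemma~\ref{lem:suprema}(ii) on $g$ and then applies Proposition~\ref{continuity} to the sequence element $g_k$, whereas you apply Proposition~\ref{continuity} directly to $g$ and use the topology of $F_{\mathcal S}$ together with continuity of $h_{N_x}$ to propagate the comparison with the sequence; both routes are short and equivalent in spirit.
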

\begin{proof}
(i) $\implies$ (ii):
Given $x\in X$, use Lemma \ref{lem:suprema} to write $g=\sup_n g_{V_n,s_n}$,
where $(s_n)$ is rapidly increasing in $S$ and $(V_n)$ is a decreasing sequence
of closed sets whose interior contain $x$. Since $f\ll g$, there is $n$ such
that
$$f\leq g_{V_n,s_n}\leq g_{V_{n+1},s_{n+1}}\leq g\,.$$
Let $a_x=g_{V_{n+1},s_{n+1}}(x)=\hat{s}_{n+1}(x)$, which clearly satisfies
$f(x)\leq\hat{s}_n(x)\ll\hat{s}_{n+1}(x)\ll g(x)$. Assume now that $s\in S$ and
$U$ is a closed set with $x\in\mathring{U}$ such that $a_x\ll \hat{s}(x)$ and
$\hat{s}(y)\ll g(y)$ for all $y\in U$. Since $s_n\ll s_{n+1}$ and
$\hat{s}_{n+1}(x)\ll\hat{s}(x)$, there is by Lemma \ref{limit} a closed set $V$
with $x\in\mathring{V}$ (and we may assume $V\subset V_{n+1}\cap U$) such that
$\hat{s}_n(y)\leq \hat{s}(y)$ for all $y\in V$. Thus $f(y)\leq\hat{s}_n(y)\leq
\hat{s}(y)\leq g(y)$ for all $y\in V$.

(ii) $\implies$ (i):
Suppose now that $g\leq\sup(g_{n})$, where $(g_n)$ is an increasing sequence in $\Gamma(X, F_S)$.
Let $x\in X$, and write $g=\sup g_{V_n,s_n}$ as in Lemma \ref{lem:suprema}, where
$(s_n)$ is a rapidly increasing sequence in $S$. Our assumption provides us first with $a_x$
such that $f(x)\ll a_x\ll g(x)$. In particular, there is $m$ such that
$a_x\ll\hat{s}_m(x)\ll\hat{s}_{m+1}(x)\ll\hat{s}_{m+2}(x)\ll g(x)$, and hence
there exists $k$ with $\hat{s}_{m+1}(x)\ll g_k(x)$.

As $g_k$ is continuous, condition (ii) in Proposition \ref{continuity} implies
that we may find $s\in S$ and a closed set $U$ with $x\in\mathring{U}$ such
that $\hat{s}_{m+1}(x)\ll\hat{s}(x)$ and $\hat{s}(y)\ll g_k(y)$ for all $y\in
U$. Now, as $s_m\ll s_{m+1}$, there exists a closed subset $V$ with
$x\in\mathring{V}$ and $\hat{s}_m(y)\leq \hat{s}(y)$ for all $y\in V$, whence
$\hat{s}_m(y)\leq g(y)$ for all $y\in U\cap V$.

Since also $\hat{s}_m(y)\ll g(y)$ for all $y\in V_m$, there is by assumption a
closed set $W\subseteq V_m\cap V$ (whose interior contains $x$) such that
$f(y)\leq \hat{s}_m(y)\leq g_k(y)$ for all $y\in W$. Now, by a standard
compactness argument we may choose $l$ such that $f\leq g_l$.
\end{proof}

\begin{lemma}\label{waybelow} Let $\mathcal{S}\colon\mathcal{V}_X\to\Cu$ be a surjective presheaf of semigroups
Then, the morphism  $$\begin{array}{cccc}
                     \alpha:&   S&\to &\Gamma(X,F_S) \\
                            &  s&\mapsto &\hat{s}
                  \end{array}
$$  preserves compact containment and suprema.
\end{lemma}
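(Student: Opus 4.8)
The plan is to verify the two assertions separately, using the characterizations of continuity and of compact containment in $\Gamma(X,F_S)$ that were established above. Note first that $\alpha$ is a well-defined semigroup morphism: each $\hat s$ is a section by construction, it is continuous by the discussion following Remark \ref{rem:surjective} (where it was checked that $\hat s^{-1}(U_r^\gg)$ is open), and $\widehat{s+t}=\hat s+\hat t$ pointwise since the fiber maps $\pi_x\colon S\to S_x$ are semigroup morphisms. So the content is that $\alpha$ is compatible with $\ll$ and with suprema of increasing sequences.

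For preservation of compact containment, I would take $s\ll r$ in $S$ and show $\hat s\ll\hat r$ in $\Gamma(X,F_S)$ by verifying condition (ii) of Proposition \ref{ll}. Fix $x\in X$. Since $s\ll r$ I can interpolate $s\ll s'\ll r$ in $S$, and I would set $a_x=\hat{s'}(x)$, which satisfies $\hat s(x)\ll\hat{s'}(x)\ll\hat r(x)$ because the induced fiber maps preserve $\ll$. Now suppose $t\in S$ and a closed set $U$ with $x\in\mathring U$ are given such that $a_x=\hat{s'}(x)\ll\hat t(x)$ and $\hat t(y)\ll\hat r(y)$ for all $y\in U$. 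Since $s\ll s'$ and $\hat{s'}(x)\leq\hat t(x)$, Lemma \ref{limit}(i) produces a closed set $V$ with $x\in\mathring V$, and we may arrange $V\subseteq U$, such that $\hat s(y)\leq\hat t(y)$ for all $y\in V$; then on $V$ we get $\hat s(y)\leq\hat t(y)\leq\hat r(y)$, which is exactly what condition (ii) of Proposition \ref{ll} demands (with $f=\hat s$, $g=\hat r$). Hence $\hat s\ll\hat r$.

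For preservation of suprema, let $(s_n)$ be an increasing sequence in $S$ with supremum $s$; I want $\hat s=\sup_n\hat{s}_n$ in $\Gamma(X,F_S)$. Since $s_n\leq s$ for all $n$, clearly $\hat{s}_n\leq\hat s$ pointwise, so $\hat s$ is an upper bound. Conversely, let $g\in\Gamma(X,F_S)$ be any section with $\hat{s}_n\leq g$ for all $n$; I must show $\hat s\leq g$, i.e. $\hat s(x)=\pi_x(s)\leq g(x)$ for every $x$. Fix $x$. Because $\pi_x$ is a morphism in $\Cu$, it preserves suprema of increasing sequences, so $\pi_x(s)=\sup_n\pi_x(s_n)=\sup_n\hat{s}_n(x)\leq g(x)$, the last inequality holding since $\hat{s}_n(x)\leq g(x)$ for every $n$ and $g(x)$ is an upper bound. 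Thus $\hat s\leq g$, so $\hat s$ is the least upper bound, i.e. $\alpha(\sup_n s_n)=\sup_n\alpha(s_n)$.

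The only subtle point — the step I would flag as the heart of the argument — is the compact-containment direction, and specifically the passage from $s\ll r$ in $S$ to the rather rigid condition (ii) of Proposition \ref{ll}: one must be careful that the closed set $V$ extracted from Lemma \ref{limit}(i) can indeed be taken inside the given $U$ and with $x$ in its interior simultaneously, which is fine since one may intersect with $U$ and the interiors still cover a neighborhood of $x$. The suprema direction is essentially formal once one invokes that the fiber maps $\pi_x$ are $\Cu$-morphisms and hence sup-preserving; no compactness argument on $X$ is needed here, in contrast to the implication (ii)$\implies$(i) of Proposition \ref{ll}.
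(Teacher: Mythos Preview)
Your proof is correct and follows essentially the same approach as the paper. For compact containment you verify condition (ii) of Proposition \ref{ll} via an interpolant $s\ll s'\ll r$ and Lemma \ref{limit}(i), exactly as the paper does (it writes $r=\sup r_n$ and uses $r_m$ in place of your $s'$, but this is the same idea). For suprema, the paper instead records the general fact that the pointwise supremum of an increasing sequence of continuous sections is again continuous (hence suprema in $\Gamma(X,F_S)$ are computed pointwise), whereas you argue directly that $\hat s$ is the least upper bound of $(\hat s_n)$ using that each $\pi_x$ preserves suprema; your route is slightly more direct for the statement at hand, while the paper's formulation has the advantage of establishing a fact reused later.
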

\begin{proof}
Using condition (ii) of Proposition \ref{continuity} it is easy to verify
that, if $(f_n)$ is an increasing sequence in $\Gamma(X, F_S)$, then its
pointwise supremum is also a continuous section.

% Assume now that $s\ll r$ in $S$. Write
% $r=\sup(r_{n})$, where $(r_{n})$ is a rapidly increasing sequence in
% $S$. We may find $m$ such that
% $$s\ll r_m\ll r_{m+1}\ll r_{m+2}\ll r\,.$$ 
% 
% Take $a_x=\hat{r}_{m+2}(x)$. Suppose that $t\in S$ satisfies $a_x\ll \hat{t}(x)$
% and $\hat{t}(y)\ll\hat{r}(y)$ for $y$ in a closed subset $U$ whose interior
% contains $x$. By Lemma \ref{limit}, there is a closed set $V$ such that
% $x\in\mathring{V}$ and $\hat{r}_n(y)\leq \hat{t}(y)$ for $y\in V$. Thus, for
% any $y\in V\cap U$, we have $\hat{s}(y)\leq\hat{r}_n(y)\leq\hat{t}(y)\leq
% \hat{r}(y)$. This verifies condition (ii) in Proposition \ref{ll}, whence $\hat{s}\ll\hat{r}$.

Assume now that $s\ll r$ in $S$. Write
$r=\sup(r_{n})$, where $(r_{n})$ is a rapidly increasing sequence in
$S$. We may find $m$ such that
$$s\ll r_m\ll r\,.$$ 

Take $a_x=\hat{r}_{m}(x)$. Suppose that $t\in S$ satisfies $a_x\ll \hat{t}(x)$
and $\hat{t}(y)\ll\hat{r}(y)$ for $y$ in a closed subset $U$ whose interior
contains $x$. By Lemma \ref{limit}, applied to $\hat{r}_m(x)\leq \hat t(x)$ and $s\ll r_m$, there is a closed set $V$ such that
$x\in\mathring{V}$ and $\hat{s}(y)\leq \hat{t}(y)$ for $y\in V$. Thus, for
any $y\in V\cap U$, we have $\hat{s}(y)\leq\hat{t}(y)\leq \hat{r}(y)$. This verifies condition (ii) in Proposition \ref{ll}, whence $\hat{s}\ll\hat{r}$.

\end{proof}

\begin{corollary}
\label{cor:extensiontonbd}
Let $\mathcal{S}\colon \mathcal{V}_X\to\Cu$ be a surjective sheaf of semigroups on $X$, $f\in\Gamma(X,F_S)$, $s\in S$, and let $V$ be a closed subset of $X$. If $\hat{s}(x)\leq f(x)$ for all $x\in V$ and $s'\ll s$, then there is a closed subset $W$ of $X$ with $V\subset\mathring{W}$ such that $\pi_{W}(s')\ll f_{|W}$. 
\end{corollary}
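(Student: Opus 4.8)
The plan is to reduce the statement to a local-to-global argument using Lemma \ref{limit}(ii) and the sheaf property, exactly as in the proof of Lemma \ref{limit}(ii) but now comparing a fixed induced section against an arbitrary continuous section $f$ rather than against another induced section. First I would pick an intermediate element $s''$ with $s'\ll s''\ll s$, so that it suffices to produce a closed $W$ with $V\subset\mathring{W}$ and $\pi_W(s'')\leq f_{|W}$; the strict inequality $\pi_W(s')\ll f_{|W}$ will then follow from Lemma \ref{waybelow}, since $\pi_W(s')=\widehat{s'}_{|W}\ll \widehat{s''}_{|W}\leq f_{|W}$ once we know $\widehat{s''}(y)\leq f(y)$ on $W$ (here I am using that the way-below relation is preserved under restriction maps, and that Lemma \ref{waybelow} applied to the sheaf $\mathcal S_{|W}$ on $W$ gives $s'\ll s''$ in $S$ implies $\widehat{s'}\ll \widehat{s''}$ in $\Gamma(W,F_{S_{|W}})$).

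Next, for the inequality $\pi_W(s'')\leq f_{|W}$, I would work pointwise over $V$. Fix $x\in V$. Since $\widehat{s}(x)\leq f(x)$ and $s''\ll s$, I want a closed neighborhood $U_x$ of $x$ with $\widehat{s''}(y)\leq f(y)$ for all $y\in U_x$. To get this, use continuity of $f$ (Proposition \ref{continuity}(ii)): choosing an element $a_x$ with $\widehat{s''}(x)\ll a_x\ll \widehat{s}(x)\leq f(x)$ — available because $\widehat{s}(x)$ is a supremum of a rapidly increasing sequence through which $a_x$ factors — Proposition \ref{continuity}(ii) produces $t\in S$ and a closed set $U_x$ with $x\in\mathring{U_x}$, $\widehat{t}(x)\gg a_x$ and $\widehat{t}(y)\ll f(y)$ for all $y\in U_x$. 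Then $\widehat{s''}(x)\ll a_x\ll\widehat{t}(x)$, so by Lemma \ref{limit}(i) (applied to any $s'''$ with $s''\ll s'''\ll s$, or just directly since $s''\ll s$ guarantees a strictly smaller element below it) we may shrink $U_x$ so that in addition $\widehat{s''}(y)\leq \widehat{t}(y)$ for all $y\in U_x$. Combining, $\widehat{s''}(y)\leq \widehat{t}(y)\ll f(y)\leq f(y)$ on $U_x$, as desired.

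Finally I would invoke compactness of $V$ to extract a finite subcover $\mathring{U}_{x_1},\dots,\mathring{U}_{x_n}$ of $V$, set $W=\bigcup_i U_{x_i}$ (so $V\subset\mathring{W}$), and use that $\mathcal S$ is a sheaf: since $\pi_{U_{x_i}}(s'')\leq f_{|U_{x_i}}$ for each $i$ and the restrictions agree on overlaps (both sides are genuine restrictions of sections), the sheaf gluing forces $\pi_W(s'')\leq f_{|W}$. Then the previous paragraph upgrades this to $\pi_W(s')\ll f_{|W}$. The main obstacle I anticipate is bookkeeping the several intermediate way-below elements ($s'\ll s''\ll s$, plus the auxiliary $a_x$ and $t$) so that Lemma \ref{limit} can be applied at each stage with a strictly dominated element in hand; none of the steps is deep, but one must be careful that the final $W$ genuinely contains $V$ in its interior and that $\mathcal S$ being a \emph{sheaf} (not merely a presheaf) is what licenses the gluing of the pointwise-on-$V$ inequalities into a single inequality over $W$. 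This is why the hypothesis that $\mathcal S$ is a sheaf — rather than just surjective presheaf as in Lemma \ref{waybelow} — is essential here, mirroring the passage from Lemma \ref{limit}(i) to Lemma \ref{limit}(ii).
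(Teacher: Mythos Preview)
Your approach is essentially identical to the paper's: choose intermediate elements below $s$, use Proposition~\ref{continuity}(ii) and Lemma~\ref{limit}(i) at each $x\in V$ to get a closed neighbourhood on which the intermediate induced section is dominated by $f$, cover $V$ by finitely many such neighbourhoods, glue via the sheaf property, and finish with Lemma~\ref{waybelow}. The only imprecision is the step where you invoke Lemma~\ref{limit}(i) ``applied to any $s'''$ with $s''\ll s'''\ll s$'': for that to work you must have chosen $a_x=\widehat{s'''}(x)$ from the outset (so that $\widehat{s'''}(x)\leq\widehat t(x)$ holds and the lemma then pushes $s''\ll s'''$ down to a neighbourhood), which is exactly what the paper does by fixing $s'\ll t'\ll t\ll s$ at the start and taking $a_x=\widehat t(x)$; once you make that choice explicit your argument matches the paper's line for line.
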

\begin{proof}
Let $s'\ll t'\ll t\ll s$ in $S$. For each $x\in V$, there is by Proposition \ref{continuity} a closed set $U_x$ whose interior contains $x$, and $r_x\in S$ such that $\hat{t}(x)\ll\hat{r}_x(x)$, and $\hat{r}_x(y)\leq f(y)$ for all $y\in U_x$. Now apply condition (i) of Lemma \ref{limit} to $t'\ll t$ in order to find another closed set $V_x$ such that $x\in\mathring{V}_x$ and $\hat{t'}(y)\leq \hat{r}_x(y)$ for $y\in V_x$. Letting $W_x=U_x\cap V_x$, we have $\hat{t'}(y)\ll f(y)$ for all $y\in  W_x$. Since $V\subseteq \bigcup_{x} \mathring{W}_x$, and $V$ is closed, we may find a finite number of $W_x$'s that cover $V$, whose union is the closed set $W$ we are after. Since $\mathcal{S}$ is a sheaf, it follows that $\pi_{W}(t')\leq f_{|W}$, and by Lemma \ref{waybelow} we see that $\pi_{W}(s')\ll\pi_{W}(t')\leq f_{|W}$, as desired.
\end{proof}

We now proceed to define a class of continuous sections that will play an
important role. This will be a version, for presheaves on spaces of dimension one, 
of the notion of piecewise characteristic function given in \cite[Definitions 2.4 and 5.9]{APS}.
We show below that, for a surjective sheaf of semigroups $\mathcal{S}\colon \mathcal{V}_X\to \Cu$ on a one dimensional space $X$,
every element in $\Gamma(X,F_S)$ can be written as the supremum of a rapidly increasing sequence of piecewise characteristic sections.
From this, we can conclude that $\Gamma(X,F_S)$ is an object in $\Cu$. Just as in \cite{APS}, we could define piecewise characteristic sections 
for spaces of arbitrary (finite) dimension and make the case that $\Gamma(X,F_S)$ belongs to $\Cu$ as well.
This is however technically much more involved and beyond the scope of this paper, whence it will be pursued elsewhere.

\begin{definition}(Piecewise characteristic sections)\label{piecewise}
Let $X$ be a one dimensional compact Hausdorff space. Let
$\{U_i\}_{i=1\ldots n}$ be an open cover of $X$ such that the
multiplicity of $\{U_i\}$ and $\{\bar{U_i}\}$ is at most two. Assume also that
$\mathrm{dim}(\partial(\overline{U_i}))=0$ for all $i$.

Let $\mathcal{S}\colon\mathcal{V}_X\to\Cu$ be a presheaf of semigroups on $X$.
For $i\in\{1,\ldots,n\}$, choose elements $s_i\in S$ and $s_{\{i, j\}}\in S$ whenever $i\neq j$, such that
\[
\hat{s}_i(x)\leq \hat{s}_{\{i, j\}}(x) 
\text{ for all } x \text{ in }\overline{ \partial(U_i\cap U_j)\cap U_i}\,.
\]
We define a {\rm piecewise characteristic section} as 
\[
g(x)=\left\{ \begin{array}{cc}
                            \hat{s}_{i}(x)&\text{   if   }x\in U_i\setminus
(\bigcup_{j\neq i} U_j)\\
                            \hat{s}_{\{i, j\}}(x)&\text{   if   }x\in U_i\cap U_j\,\,\,\,\, .

                                                   \end{array} \right.
\]
\end{definition}

By an argument similar to the one in Lemma \ref{lem:suprema}, it follows that piecewise characteristic sections are continuous.

\begin{remark}
\label{rem:pwdim0}
{\rm In the case of zero dimensional spaces, piecewise characteristic sections are much easier to define. 
Given an open cover $\{U_i\}_{i=1,\ldots,n}$ consisting of pairwise disjoint clopen sets, a presheaf of 
semigroups $\mathcal{S}$ on $\Cu$ and elements $s_1,\ldots,s_n\in S$, a piecewise characteristic section
 in this setting is an element $g\in\Gamma(X,F_S)$ such that $g(x)=\hat{s}_i(x)$, whenever $x\in U_i$.}
\end{remark}

If $f\in\Gamma(X,F_S)$ and $g$ is a piecewise characteristic section such that  $g\ll f$, then we say that $g$ is a
piecewise characteristic section of $f$ and we will denote the set of these sections by $\chi(f)$.

\begin{lemma}\label{limit1}
Let $X$ be a one dimensional compact Hausdorff space, and
$\mathcal{S}\colon\mathcal{V}_X\to\Cu$ a surjective presheaf of semigroups.
If $f\in \Gamma(X,F_S)$, then $$f=\sup\{g\mid g\in\chi(f)\}\,.$$
\end{lemma}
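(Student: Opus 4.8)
The plan is to fix $f\in\Gamma(X,F_S)$ and show two things: first, that $\chi(f)$ is upward directed (so that its supremum makes sense as a supremum of an increasing sequence once we pass to a countable cofinal subset, using second countability of $X$), and second, that $\sup\chi(f)=f$ pointwise, which combined with Lemma \ref{waybelow} (that $\alpha$ preserves suprema) and the fact that pointwise suprema of increasing sequences of continuous sections are continuous will give the claim. The heart of the matter is producing, for each $x_0\in X$ and each $a_{x_0}\ll f(x_0)$, a piecewise characteristic section $g\in\chi(f)$ with $a_{x_0}\ll g(x_0)$; then $f(x_0)=\sup\{g(x_0)\mid g\in\chi(f)\}$ because $f(x_0)$ is the supremum of a rapidly increasing sequence in $S_{x_0}$ and each such $a_{x_0}$ is dominated by some $g(x_0)$.

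To build such a $g$, I would start as in Lemma \ref{lem:suprema}(ii): write $f(x_0)=\sup_n\hat{s}_n(x_0)$ with $(s_n)$ rapidly increasing in $S$, pick $n$ with $a_{x_0}\ll\hat{s}_n(x_0)\ll\hat{s}_{n+1}(x_0)\ll f(x_0)$, and use Proposition \ref{continuity} to find a closed neighborhood on which $\hat{s}_{n+1}$ stays below $f$. More precisely, by compactness of $X$ and Proposition \ref{continuity} (applied at every point), one obtains a finite open cover $\{U_i\}_{i=1}^m$ of $X$ and elements $t_i\in S$ with $\hat{t}_i(y)\ll f(y)$ for all $y\in\overline{U_i}$, and with each $\hat{t}_i$ large enough near the relevant point; by one-dimensionality we may refine so that $\{U_i\}$, $\{\overline{U_i}\}$ have multiplicity at most two and $\dim\partial(\overline{U_i})=0$, as required by Definition \ref{piecewise}. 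One of the $t_i$'s should be chosen so that $\hat{t}_i(x_0)\gg a_{x_0}$. Now set $s_i=t_i$, and on the overlaps $U_i\cap U_j$ we need $s_{\{i,j\}}\in S$ with $\hat{s}_{\{i,j\}}\geq \hat{s}_i,\hat{s}_j$ on $\overline{\partial(U_i\cap U_j)\cap U_i}$ (and symmetrically) while still $\hat{s}_{\{i,j\}}(y)\ll f(y)$ there: since $\hat{t}_i(y),\hat{t}_j(y)\ll f(y)$ on a neighborhood of that zero-dimensional boundary set, surjectivity of the presheaf together with a choice of $s_{\{i,j\}}$ with $\pi_x(s_{\{i,j\}})$ slightly above $\max$ of the two on the relevant closed set should do the job — here the sheaf/pullback machinery (Corollary \ref{cor:extensiontonbd}) is the tool for passing from "$\leq f$ near a closed set" to "$\leq f$ on a closed neighborhood". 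The resulting piecewise characteristic section $g$ satisfies $g(y)\ll f(y)$ for all $y$, hence $g\ll f$ by Proposition \ref{ll} (one checks condition (ii) using the $t_i$'s as witnesses, exactly as in the continuity arguments above), so $g\in\chi(f)$, and $g(x_0)\geq\hat{s}_i(x_0)=\hat{t}_i(x_0)\gg a_{x_0}$.

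For directedness: given $g_1,g_2\in\chi(f)$, I would refine their two covers to a common cover of the required multiplicity-two, zero-dimensional-boundary type, and on each piece take $s_i$ to be an element of $S$ whose section dominates both $g_1$ and $g_2$ on the corresponding closed set while staying $\ll f$ — again available by surjectivity plus Corollary \ref{cor:extensiontonbd} applied to the closed set where $g_1,g_2\leq f$. This produces $g\in\chi(f)$ with $g\geq g_1,g_2$. Finally, using second countability we extract from $\chi(f)$ a countable subset $\{g_k\}$ that is cofinal and, by directedness, may be arranged increasing; then $\sup_k g_k$ is a continuous section (by the remark at the start of the proof of Lemma \ref{waybelow}) that agrees with $f$ pointwise by the construction above, so $f=\sup_k g_k=\sup\chi(f)$.

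The main obstacle I anticipate is the bookkeeping on the overlaps: one must simultaneously refine the cover so that it meets the combinatorial requirements of Definition \ref{piecewise} (multiplicity two, $\dim\partial(\overline{U_i})=0$), keep each $\hat{s}_i$ below $f$ on the \emph{closed} sets $\overline{U_i}$, and choose the overlap elements $s_{\{i,j\}}$ large enough on the boundary pieces but still below $f$ there — all while preserving that the point $x_0$ lies in a region where the section is large enough to dominate $a_{x_0}$. Controlling the compact-containment relations through all these refinements, and invoking Corollary \ref{cor:extensiontonbd} at the right closed sets to turn pointwise domination into domination on neighborhoods, is where the argument requires care; the one-dimensionality of $X$ is used exactly to make the overlap pattern simple enough that a single family of overlap elements $s_{\{i,j\}}$ suffices.
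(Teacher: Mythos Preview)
Your plan overshoots what the lemma actually asks and, in doing so, appeals to hypotheses you do not have. The statement only assumes $\mathcal{S}$ is a surjective \emph{presheaf}, yet your construction of the overlap elements $s_{\{i,j\}}$ and your directedness argument both lean on Corollary~\ref{cor:extensiontonbd}, which requires $\mathcal{S}$ to be a \emph{sheaf}. Moreover, directedness of $\chi(f)$ is precisely the content of Proposition~\ref{middle}, which is proved \emph{after} this lemma (and again under a sheaf hypothesis); folding it into the present proof makes the paper's later development circular. You also do not need directedness to make sense of $\sup\chi(f)$: since $g\le f$ for every $g\in\chi(f)$, the claim is simply that $f$ is the least upper bound, i.e.\ that $f(x)=\sup\{g(x)\mid g\in\chi(f)\}$ for each $x$.

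The paper's argument is far more elementary and avoids all of this. Fix $x$ and write, via Lemma~\ref{lem:suprema}(ii), $f=\sup_n f_{V_n,s_n}$ with $(s_n)$ rapidly increasing and $x\in\mathring{V}_n$. Then set
\[
h_n(y)=\begin{cases}\hat{s}_n(y)&\text{if }y\in\mathring{V}_n,\\ 0&\text{otherwise.}\end{cases}
\]
Each $h_n$ is a piecewise characteristic section for the two-set cover $\{\mathring{V}_n,\, X\setminus\overline{W}\}$ (with $W$ a slightly smaller neighborhood of $x$), the boundary condition in Definition~\ref{piecewise} being trivially satisfied since one of the values is $0$. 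One checks $h_n\ll f$ directly from Proposition~\ref{ll}. Since $h_n(x)=\hat{s}_n(x)$ and $\sup_n\hat{s}_n(x)=f(x)$, this already gives $f(x)\le\sup\{g(x)\mid g\in\chi(f)\}$, and the reverse inequality is immediate. No global cover, no overlap elements, no sheaf condition, and no directedness are needed; the pointwise nature of the conclusion means a ``bump'' section supported near a single point suffices.
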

\begin{proof}
Let $x\in X$. By Lemma \ref{lem:suprema}, we may write $f=\sup
f_{V_n,s_n}$, where $(V_n)$ is a decreasing sequence of closed sets with
$x\in\mathring{V_n}$ and $(s_n)$ is rapidly increasing. By construction,
$f_{V_n,s_n} (y)=\hat{s}_n(y)\ll f(y)$ for all $y\in V_n$.

Now define
\[
h_n(y)=\left\{ \begin{array}{cc}
                            \hat{s}_n(y) &\text{   if  
}y\in\mathring{V_n}\\
                            0& \text{ otherwise}\,\,\,\,\, .

                                                   \end{array} \right.
\] 
It is easy to verify, using Proposition \ref{ll}, that $h_n\ll f$, and also that each $h_n$ is a piecewise characteristic section for $f$. Using this fact for all $x\in X$, we conclude that $f=\sup \{ g \mid g \in \chi(f)\}$.
\end{proof}

\begin{proposition}\label{middle}
Let $X$ be a compact Hausdorff space with $\mathrm{dim}(X)\leq 1$, and let
$\mathcal{S}\colon\mathcal{V}_X\to\Cu$ be a surjective sheaf of semigroups. Suppose $h_1,h_2,f\in\Gamma(X,F_S)$ such that $h_1,h_2\ll f$. Then, there exists $g\in\chi(f)$ such that
$h_1,h_2\ll g$. In particular, $\chi(f)$ is an upwards directed set.
\end{proposition}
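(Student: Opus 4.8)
The plan is to build $g$ as a piecewise characteristic section dominating both $h_1$ and $h_2$ by first passing to $h=\sup\{h_1,h_2\}$ (which is itself a continuous section, and we may assume $h\ll f$ after perhaps replacing $h_1,h_2$ by slightly larger sections still way-below $f$, using Lemma~\ref{limit1} to interpolate $h_i\ll h_i'\ll f$ with $h_i'$ piecewise characteristic if convenient). Then it suffices to find $g\in\chi(f)$ with $h\ll g$, since compact containment is preserved under passing to smaller elements. So the real task is: given a single $h\ll f$, produce a piecewise characteristic section of $f$ dominating $h$.

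First I would use Lemma~\ref{limit1} to write $f=\sup\{g'\mid g'\in\chi(f)\}$, so that there is \emph{some} piecewise characteristic section $g_0\in\chi(f)$ with $h\ll f$ forcing... no: the issue is that $\chi(f)$ is not yet known to be directed, so $h\ll f=\sup\chi(f)$ does not immediately give $h\le g_0$ for a single $g_0$. Instead I would argue locally and glue. Fix an element $h'$ with $h\ll h'\ll f$. For each $x\in X$, using Proposition~\ref{continuity} applied to $h'$ and $f$, choose $s^{(x)}\in S$ and a closed neighborhood so that $h(x)\ll h'(x)\ll \hat{s}^{(x)}(x)$ and $\hat{s}^{(x)}(y)\ll f(y)$ on a closed set $W_x$ with $x\in\mathring W_x$; shrinking, I can also demand $\hat h(y)\le \hat s^{(x)}(y)$ for $y\in W_x$ via Lemma~\ref{limit} (since $h'(x)\ll \hat s^{(x)}(x)$ and, after a further interpolation $h\ll h''\ll h'$, pushing $h''$ below $\hat s^{(x)}$ on a neighborhood using that the presheaf is surjective so $h''$ is locally of the form $\hat{t}$). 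By compactness extract a finite subcover $\{\mathring W_{x_1},\dots\}$ of $X$; since $\dim X\le 1$, refine it to an open cover $\{U_i\}_{i=1}^n$ of multiplicity at most two with each $\partial(\overline{U_i})$ zero-dimensional, and with each $\overline{U_i}$ contained in some $W_{x_{k(i)}}$. Set $s_i:=s^{(x_{k(i)})}$.

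The remaining — and main — obstacle is to define the overlap data $s_{\{i,j\}}$ satisfying the compatibility $\hat s_i(x)\le \hat s_{\{i,j\}}(x)$ on $\overline{\partial(U_i\cap U_j)\cap U_i}$ while keeping $g\ll f$ and $h\le g$ everywhere. On $U_i\cap U_j$ both $\hat s_i$ and $\hat s_j$ lie below $f$, so a natural candidate is to pick $s_{\{i,j\}}$ with $\widehat{s_{\{i,j\}}}$ just above $\max(\hat s_i,\hat s_j)$ on a neighborhood of $\overline{U_i\cap U_j}$ and still below $f$ there: since $\mathcal S$ is a \emph{sheaf} and the relevant overlaps have interior (we are on $\mathcal V_X$), one can use Corollary~\ref{cor:extensiontonbd} to realize a section dominating $\hat s_i\vee \hat s_j$ on a neighborhood of the (zero-dimensional, hence nicely behaved) boundary set and still way-below $f$, then pull it back to an element $s_{\{i,j\}}\in S$ via surjectivity. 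One must check the defining inequality of Definition~\ref{piecewise} holds on $\overline{\partial(U_i\cap U_j)\cap U_i}$ — which holds by construction since $\widehat{s_{\{i,j\}}}\ge \hat s_i$ there — and that the resulting $g$ is $\ll f$, which follows from Proposition~\ref{ll} together with the local way-below estimates $\hat s_i(y),\widehat{s_{\{i,j\}}}(y)\ll f(y)$ arranged above. Finally $h\le g$ pointwise: on $U_i\setminus\bigcup_{j\ne i}U_j$ we have $\hat h\le \hat s_i=g$ by the arrangement on $W_{x_{k(i)}}\supseteq \overline{U_i}$, and on overlaps $\hat h\le \hat s_i\le \widehat{s_{\{i,j\}}}=g$; combined with $h\ll f$ and $g\ll f$ and $g$ being piecewise characteristic, we get $g\in\chi(f)$ with $h_1,h_2\ll h\le g$, hence $h_1,h_2\ll g$. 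Directedness of $\chi(f)$ is then immediate: given $g_1,g_2\in\chi(f)$, apply the statement with $h_i=g_i$. The delicate point throughout — and where I expect to spend the most care — is choosing the overlap elements $s_{\{i,j\}}$ so that \emph{all three} requirements (sheaf compatibility on the boundary, $g\ll f$, $h\le g$) hold simultaneously on the zero-dimensional boundary sets, which is exactly what forces the hypothesis $\dim X\le 1$ and the use of the sheaf (rather than merely presheaf) property via Corollary~\ref{cor:extensiontonbd}.
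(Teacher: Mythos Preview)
Your outline has the right shape but contains two genuine gaps.

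\textbf{First gap (minor but real):} the reduction to a single $h=\sup\{h_1,h_2\}$ is not available. Objects in $\Cu$ are guaranteed suprema only of \emph{increasing} sequences, not of arbitrary pairs; there is no reason $h_1(x)\vee h_2(x)$ exists in $S_x$, let alone that the resulting section is continuous or satisfies $h\ll f$. The paper never forms this supremum: it simply carries both $h_1$ and $h_2$ through the construction simultaneously, asking at each point for an element $s$ with $h_1(y),h_2(y)\ll\hat s(y)\ll f(y)$ on a neighborhood. You should do the same.

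\textbf{Second gap (the essential one):} the construction of the overlap elements $s_{\{i,j\}}$ is where your argument breaks. You write ``use Corollary~\ref{cor:extensiontonbd} to realize a section dominating $\hat s_i\vee\hat s_j$ \ldots\ then pull it back to an element $s_{\{i,j\}}\in S$'', but (a) $\hat s_i\vee\hat s_j$ need not exist, and (b) Corollary~\ref{cor:extensiontonbd} does not manufacture upper bounds: it only propagates an inequality $\hat s\le f$ from a closed set to a neighborhood. The real difficulty is that $U_i\cap U_j$ can be a complicated one-dimensional set, and there is no mechanism for producing a \emph{single} element of $S$ that sits above both $\hat s_i$ and $\hat s_j$ and below $f$ on the whole overlap. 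Pointwise you can always do it (write $f(x)$ as a supremum of a rapidly increasing sequence and go far enough), but gluing those local choices into one global $s_{\{i,j\}}\in S$ is precisely the problem you are trying to solve.

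The paper resolves this by an honest induction on dimension. It first proves the zero-dimensional case directly (clopen partition, one element per piece). In dimension one it sets $Y=\bigcup_i\partial(V_i)$, a zero-dimensional closed set, and applies the zero-dimensional case \emph{on $Y$} to obtain a clopen partition $\{W_k\}$ of $Y$ together with elements $t_k\ll t'_k\ll t''_k$ in $S$ such that each $\hat s_i$ restricted to $Y$ is dominated by the $\hat t_k$, which in turn are way below $f$ on $Y$. Then Lemma~\ref{limit} and Corollary~\ref{cor:extensiontonbd} are used in the correct direction --- to thicken these inequalities from $W_k\subset Y$ to small neighborhoods $W_k^\epsilon$ in $X$. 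The final cover is built from the $W_k^\epsilon$ together with pieces of the $V_i$ and $V_k\cap V_l$ cut away from $Y^\epsilon$; the $t_k$ serve as the overlap data, and now each overlap region is small enough that a single element of $S$ suffices by construction. Your proposal skips this dimensional reduction and thereby leaves the existence of $s_{\{i,j\}}$ unjustified.
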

\begin{proof}
Assume first that $X$ has dimension 0. Writing $f$ as in condition (ii) of Lemma~\ref{lem:suprema} we can find, for each 
$x\in X$, an open set $V_x$ that contains $x$, and elements $s'_x\ll s_x \ll s''_x\in S$ such that 
 \begin{equation}\label{dim0} h_1(y),h_2(y)\ll \hat {s'_x}(y) \ll \hat s_x(y)\ll \hat {s''_x}(y)\ll f(y)\quad \text{ for all }y\in \overline{V_x}.\end{equation}
Using compactness and the fact that $X$ is zero dimensional, there are $x_1,\ldots,x_n\in X$ and (pairwise disjoint) clopen sets $\{V_i\}_{i=1,\ldots,n}$ with $V_i\subseteq V_{x_i}$ and such that $X=\cup_i V_i$. Put $s_i=s_{x_i}$, $s'_i=s'_{x_i}$ and $s''_i=s''_{x_i}$. Define, using this cover, a piecewise characteristic section $g$ as $g(x)=\hat{s}_i(x)$ if $x\in V_i$. 
It now follows from \eqref{dim0} that $h_1,h_2\ll g\ll f$ (the elements $s'_i,s''_i$ are used here to obtain compact containment).

We turn now to the case where $X$ has dimension 1, and start as in the previous paragraph, with some additional care. Choose, for each $x$, a $\delta_x$-ball $V''_x$  (where $\delta_x>0$) centered at $x$ and elements $s'_x\ll s_x\ll s''_x$ such that condition \eqref{dim0} is satisfied (for all $y\in \overline{V''_x}$). Denote by $V'_x\subseteq V''_x$ the cover consisting of $\delta_x/2$-balls. By compactness we obtain a finite cover $\{V'_{x_1},\dots,V'_{x_n}\}$. 
Using \cite[Lemma 8.1.1]{Pears} together with the fact that $X$ has dimension 1, this cover has a refinement $\{V_i\}_{i=1}^n$ such that $\{V_i\}$ and $\{\overline{V}_i\}$ have both multiplicity at most $2$ and such that 
$\partial(V_i)$ has dimension $0$ for each $i$. As before, set $s_i=s_{x_i}$, $s_i'=s'_{x_i}$ and $s_i''=s''_{x_i}$. 

Let $Y$ be the closed set $\cup_{i}\partial(V_i)$, which also has dimension $0$. Put $\delta=\mathrm{min}\{\delta_{x_i}/3\}$. By construction, there is a $\delta$-neighborhood $V_i^\delta$ such that $V_i^\delta\subseteq V_i''$. As in the proof of Lemma \ref{limit1}, we see that the sections
\[
g_i(y)=\left\{ \begin{array}{cc}
                            \hat{s''_i}(y) &\text{   if  
}y\in V_i^{\delta}\\
                            0& \text{ otherwise}

                                                   \end{array} \right.
\] 
satisfy $g_i\ll f$. 
We now restrict to $Y$ and proceed as in the argument of the zero dimensional case above. In this way, we obtain piecewise characteristic sections $g_Y$, $g'_Y$, $g''_Y\in \Gamma(Y,F_S)$, defined by some open cover $\{W_i\}_{i=1}^m$ (of pairwise disjoint clopen sets of $Y$) and elements $t_i\ll t'_i\ll t''_i\in S$ in such a way that $g_Y(y)=\hat{t}_i(y)$, $g'_Y(y)=\hat{t'_i}(y)$ and $g''_Y(y)=\hat{t''_i}(y)$ whenever $y\in W_i$, and such that 
\begin{equation}\label{induccio0} \pi_Y(g_i)\ll g_Y\ll g'_Y \ll g''_Y\ll\pi_Y(f) \text{ for all }i=1,\dots, n\,.\end{equation}

Observe that we can choose the $W_i$ of arbitrarily small size, thus in particular we may assume that each one is contained in a $\delta/6$-ball. In this way, whenever $\overline{W}_i\cap \overline{V}_j\neq \emptyset$, we have $W_i\subseteq V_j^\delta$. Therefore, if $x\in W_i$, it follows from \eqref{induccio0} that  
\[ 
\hat {s''_j}(x)=g_j(x)\leq g_Y(x)=\hat t_i(x)\,.
\]

By condition (ii) in Lemma \ref{limit}, applied to the previous inequality, there is $\epsilon>0$ such that $\hat s_j(x)\leq \hat t_i(x)$ for all $x\in W_i^\epsilon$.
Since the $W_i$ are pairwise disjoint clopen sets, we can choose $\epsilon$ such that the sets $W_i^\epsilon$ are still pairwise disjoint. Further, since also $\hat{t''_i} (y)\leq f(y)$ for $y\in W_i$ and $t'_i\ll t''_i$, we may apply Corollary \ref{cor:extensiontonbd} to obtain $\pi_{\overline{W_i^{\epsilon}}}(t'_i)\ll \pi_{\overline{W_i^{\epsilon}}}(f)$ (further decreasing $\epsilon$ if necessary). As for each $i$, we can find $U_i$ with $\overline{W_i^{\epsilon/2}}\subseteq U_i\subseteq W_i^{\epsilon}$ with zero dimensional boundary, after a slight abuse of notation we shall assume that $W_i^{\epsilon}$ itself has zero dimensional boundary. Put $Y^\epsilon=\cup_{i=1}^m W_i^\epsilon$. Notice now that, for $i$, $k<l$, the closed sets $V_i\setminus (Y^\epsilon\cup \cup_{j\neq i}V_j)$ and $(V_k\cap V_l)\setminus Y^{\epsilon}$ are also pairwise disjoint, whence they admit pairwise disjoint $\epsilon'$-neighborhoods (for a sufficiently small $\epsilon'$). As before, we shall also assume these neighborhoods have zero dimensional boundary.

Now consider the cover that consists of the sets 
\[
\{W_i^{\epsilon}\,,i=1,\ldots, m\,,(V_i\setminus (Y^\epsilon\cup \cup_{j\neq i}V_j))^{\epsilon'}\,,i=1,\ldots, n\,,(V_k\cap V_l\setminus (Y^{\epsilon}))^{\epsilon'}\,,k<l\}\,,
\]
and define a piecewise characteristic section $g$ as follows
\[
g(x)=\left\{ \begin{array}{cl}
                            \hat{t}_i(x) &\text{   if  
}x\in W_i^{\epsilon}\\
                            \hat{s}_i(x) & \text{ if }x\in (V_i\setminus (Y^\epsilon\cup \cup_{j\neq i}V_j))^{\epsilon'}\setminus Y^\epsilon\\
                            \hat{s}_k(x) & \text{ if }x\in (V_k\cap V_l\setminus (Y^{\epsilon}))^{\epsilon'}\setminus Y^{\epsilon} \text{ for }k<l\,.

                                                   \end{array} \right.
\]
That $h_1$, $h_2\leq g$ follows by construction of $g$. It remains to show that $g\ll f$. This also follows from our construction, using condition (ii) of Proposition \ref{ll}. For example, given $x\in W_i^{\epsilon}$, we have
\[
g(x)= \hat{t_i}(x)\ll\hat{t'_i}(x)\ll f(x)\,.
\]
If now $s\in S$ satisfies that $\hat{t'_i}(x)\ll\hat{s}(x)$ and $\hat{s}(y)\ll f(y)$ for $y$ in a closed set (whose interior contains $x$) then, since $t_i\ll t'_i$, we may find (again by Lemma \ref{limit}) a smaller closed set (contained in $W_i^{\epsilon}$ and with interior containing $x$) such that $g(y)=\hat{t_i}(y)\leq \hat{s}(y)\leq f(y)$ for $y$ in that set.
\end{proof}

\begin{proposition}\label{increasing}
Let $X$ be a one dimensional compact Hausdorff space, and let $\mathcal{S}\colon\mathcal{V}_X\to\Cu$ be a surjective sheaf of
semigroups with $S$ countably based. If $f\in\Gamma(X,F_S)$, then $f$ is the supremum of a rapidly
increasing sequence of elements from $\chi(f)$.
\end{proposition}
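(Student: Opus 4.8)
The statement is a countably-based refinement of the combination Lemma \ref{limit1} (which gives $f=\sup\chi(f)$) and Proposition \ref{middle} (which makes $\chi(f)$ upwards directed). The plan is to extract from the (possibly uncountable) directed set $\chi(f)$ a \emph{countable} cofinal subset and then, using upward directedness, arrange it into a single rapidly increasing sequence with supremum $f$.

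First I would use the hypothesis that $S$ is countably based to pin down a countable dense subset $D\subseteq S$: every element of $S$ is a supremum of a rapidly increasing sequence drawn from $D$. The key observation is that in the construction of a piecewise characteristic section $g\in\chi(f)$ (Definition \ref{piecewise}), the section is determined by a finite open cover $\{U_i\}$ with the stated multiplicity and boundary conditions, together with finitely many elements $s_i, s_{\{i,j\}}\in S$. Since $X$ is compact, Hausdorff and second countable, it has a countable basis, and one can restrict attention to covers built from finite unions of basic open sets; up to replacing each $s_i, s_{\{i,j\}}$ by a slightly smaller element of $D$ (which is possible because $g\ll f$ gives room, and because the inequalities $\hat s_i(x)\le \hat s_{\{i,j\}}(x)$ on the relevant closed sets are preserved under passing to compactly-contained elements via Lemma \ref{limit}), one produces a countable family $\{g_k\}_{k\in\mathbb N}\subseteq\chi(f)$ which is cofinal in $\chi(f)$: for every $g\in\chi(f)$ there is $k$ with $g\le g_k$ (indeed with $g\ll g_k\ll f$). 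Combined with Lemma \ref{limit1}, this cofinal countable family still has supremum $f$, i.e. $f=\sup_k g_k$.

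Next I would upgrade $\{g_k\}$ to a rapidly increasing sequence. Using Proposition \ref{middle} applied to pairs (and then finite subsets, by iteration) of elements of $\chi(f)$, I build inductively a sequence $(h_m)$ in $\chi(f)$ with $h_1\ll h_2\ll\cdots$, by choosing at stage $m$ an element $h_m\in\chi(f)$ with $h_{m-1}\ll h_m$ and $g_m\ll h_m$; here compact containment $h_{m-1}\ll h_m$ is exactly what Proposition \ref{middle} delivers (take $f$ there to be the ambient $f$, and the two $h_i$'s to be $h_{m-1}$ and any fixed member of $\chi(f)$ dominating $g_m$). Then $(h_m)$ is rapidly increasing, each $h_m\in\chi(f)$, and since $g_m\le h_m$ for all $m$ while $f=\sup_m g_m$, we get $f\le\sup_m h_m\le f$, so $f=\sup_m h_m$ as required.

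The main obstacle I anticipate is the countability bookkeeping in the second paragraph: making precise that the datum of a piecewise characteristic section of $f$ can be perturbed, without leaving $\chi(f)$, to one drawn from a fixed countable pool of covers and a fixed countable dense subset $D\subseteq S$, while preserving both the compatibility inequalities $\hat s_i(x)\le\hat s_{\{i,j\}}(x)$ on $\overline{\partial(U_i\cap U_j)\cap U_i}$ and the relation $g\ll f$. This requires combining Lemma \ref{limit} (to move inequalities on closed sets down to compactly-contained elements and out to neighbourhoods) with the dimension-one refinement technology already used in Definition \ref{piecewise} and Proposition \ref{middle} (so that a cover from the countable basis can still be taken with multiplicity at most two and zero-dimensional boundaries). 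Once this cofinal countable family is in hand, the passage to a rapidly increasing sequence is routine given Proposition \ref{middle}.
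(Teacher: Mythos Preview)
Your overall strategy is sound and the final step (using Proposition \ref{middle} to upgrade a countable family in $\chi(f)$ with supremum $f$ to a rapidly increasing sequence) is exactly right and matches the paper. However, the route you take to the countable family is genuinely different from the paper's, and your version leaves precisely the hard part open.

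The paper does not try to enumerate piecewise characteristic sections via countable combinatorial data. Instead it introduces a \emph{second} topology on $F_S$, generated by sets $U_s^{\ll}=\{y\in F_S\mid \hat s(x)\gg y \text{ for some }x\in U\}$, and checks that this topology is second countable (using a countable basis of $X$ and a countable dense subset of $S$). One then shows that the ``undergraph'' $U_f=\{a_x\in F_S\mid a_x\ll f(x)\}$ is open in this topology and, by Lemma \ref{limit1}, equals $\bigcup_{g\in\chi(f)} U_g$. The Lindel\"of property now yields a countable subfamily $(g_n)\subseteq\chi(f)$ with $U_f=\bigcup_n U_{g_n}$, hence $f=\sup_n g_n$, and Proposition \ref{middle} finishes as you describe. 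This completely sidesteps the dimension-theoretic bookkeeping you flagged: there is no need to perturb covers to a countable pool while preserving multiplicity $\leq 2$ and zero-dimensional boundaries, nor to perturb the $s_i,s_{\{i,j\}}$ into $D$ while maintaining the compatibility inequalities.

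Two smaller points about your write-up. First, you do not need cofinality of $\{g_k\}$ in $\chi(f)$; you only need $\sup_k g_k=f$, and your inductive construction of $(h_m)$ already works under that weaker hypothesis (apply Proposition \ref{middle} directly to $h_{m-1}$ and $g_m$, both of which lie in $\chi(f)$). The cofinality claim as stated is stronger than what your argument uses and is not obviously available without first arranging the $g_k$ into an increasing sequence. Second, if you do pursue the combinatorial route, the cleanest way to obtain the countable family is to mimic the sections $h_n$ from the proof of Lemma \ref{limit1} (supported on a single basic open set, value $\hat d$ there and $0$ elsewhere, with $d\in D$); these already form a countable family with pointwise supremum $f$, and the cover conditions are trivial for a two-set cover. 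This avoids the harder perturbation argument you sketched.
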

\begin{proof}
Let us define a new topology on $F_S$. Let
$s\in S$ and let $U$ be an open set in $X$. Consider the topology generated by the sets 
\[
U_{s}^{\ll}=\{y\in F_S\mid \hat{s}(x)\gg y\text{ for some }
x\in U\}\,.
\]
We claim that, under this topology, $F_S$ is second countable. 
Let $\{U_{n}\}$ be a basis of $X$, and $\{s_{n}\}_{n\in\mathbb{N}}$ be a dense subset of $S$. 
Therefore the collections of sets $\{(U_{n})^{\ll}_{s_{i}}\}_{n,i\in\mathbb{N}}$ is a countable basis for $F_S$. Indeed, given an open set $U$ of $X$ and $s\in S$, find  
sequences $(U_{n_i})$ and $(s_{m_j})$ such that $U=\cup U_{n_i}$ and $s=\sup s_{m_j}$. Then
\[
U_{s}^{\ll}=\cup
(U_{n_i})_{s_{m_j}}^{\ll}\,.
\]
Now, for $f\in\Gamma(X,F_S)$, put $U_{f}=\{a_{x}\in F_S \mid a_{x}\ll
f(x)\text{ for }x\in U\} $. This set is open in the topology we just have defined. To see this, let $a_x\in U_f$, and invoke Proposition \ref{continuity} to find an open set $V$ and $s\in S$ such that $a_x\ll\hat{s}(x)$ and $\hat{s}(y)\ll f(y)$ for all $y\in V$. It then follows that $a_x\in V_s^{\ll}\subseteq U_f$.

Using Lemma \ref{limit1}, we see that $U_{f}=\cup_{g\in\chi(f)}U_{g}$. Since $F_S$ is second countable, it has the Lindel\"of property, whence we may find a sequence $(g_n)$ in $\chi(f)$ such that $U_{f}=\cup_{n} U_{g_{n}}$. This sequence may be taken to be increasing by Proposition \ref{middle}. Translating this back to $\Gamma(X,F_S)$, we get $f=\sup (g_{n})$.
\end{proof}

Assembling our observations we obtain the following:
\begin{theorem}
Let $X$ be a one dimensional compact Hausdorff space, and let $\mathcal{S}\colon\mathcal{V}_X\to\Cu$ be a surjective sheaf of semigroups such that $S$
is countably based. Then, the semigroup $\Gamma(X,F_S)$ of continuous sections
belongs to the category $\Cu$.
\end{theorem}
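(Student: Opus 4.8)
The plan is to verify the three defining conditions (i)--(iii) of the category $\Cu$ for the ordered semigroup $\Gamma(X,F_S)$, which we already know is a semigroup under pointwise addition and order.

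\emph{Condition (i).} Given an increasing sequence $(f_n)$ in $\Gamma(X,F_S)$, set $f(x)=\sup_n f_n(x)$; this makes sense because each stalk $S_x=\lim_{x\in\mathring V}\mathcal{S}(V)$ is a countable inductive limit in $\Cu$ (a cofinal decreasing sequence of closed neighbourhoods of $x$ with intersection $\{x\}$ exists by metrizability, exactly as used in the proof of Lemma \ref{limit}), hence is itself an object of $\Cu$. That the pointwise supremum $f$ is again a continuous section is the observation already recorded in the proof of Lemma \ref{waybelow}, which follows readily from criterion (ii) of Proposition \ref{continuity}; then $f$ is the least upper bound of $(f_n)$ in the pointwise order. \emph{Condition (ii)} is precisely Proposition \ref{increasing}, once we note that the piecewise characteristic sections in $\chi(f)$ belong to $\Gamma(X,F_S)$.

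It remains to establish \emph{condition (iii)}. The compatibility of suprema with addition is immediate: for increasing sequences $(f_n),(g_n)$ one has $\sup_n(f_n+g_n)=(\sup_n f_n)+(\sup_n g_n)$, since both sides are computed pointwise and addition is compatible with suprema in each $S_x\in\Cu$. For the compatibility of $\ll$ with addition, suppose $f_i\ll g_i$ in $\Gamma(X,F_S)$, $i=1,2$. By Proposition \ref{increasing} write $g_i=\sup_n g_{i,n}$ for a rapidly increasing sequence $(g_{i,n})$ of piecewise characteristic sections with $g_{i,n}\in\chi(g_i)$; since $f_i\ll g_i$ we may choose an index $m$, common to $i=1,2$ by taking the larger one, with $f_i\le g_{i,m}$, so that $f_1+f_2\le g_{1,m}+g_{2,m}$. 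Hence it suffices to prove $g_{1,m}+g_{2,m}\ll g_1+g_2$, which we do by checking condition (ii) of Proposition \ref{ll}: for $x\in X$ take as witness $a_x=g_{1,m+1}(x)+g_{2,m+1}(x)$; evaluation at $x$ preserves $\ll$ (if $p\ll q$ in $\Gamma(X,F_S)$ then by Proposition \ref{ll} there is $a_x$ with $p(x)\ll a_x\ll q(x)$, whence $p(x)\ll q(x)$), so compatibility of $\ll$ with addition in the stalk $S_x$ gives $(g_{1,m}+g_{2,m})(x)\ll a_x\ll(g_1+g_2)(x)$. The remaining clause of Proposition \ref{ll}(ii) is the crux: given an induced section $\hat s$, $s\in S$, with $a_x\ll\hat s(x)$ and $\hat s(y)\ll(g_1+g_2)(y)$ on a closed neighbourhood of $x$, one must produce a smaller closed neighbourhood on which $g_{1,m}+g_{2,m}\le\hat s\le g_1+g_2$. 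Here one exploits that each $g_{i,m+1}$ agrees, near $x$, with a combination of finitely many of its defining data elements $s_a,s_{\{a,b\}}\in S$, so that $g_{1,m+1}+g_{2,m+1}$ locally coincides with a sum of induced sections $\hat u+\hat v=\widehat{u+v}$; one then feeds the relations $g_{i,m}(x)\ll g_{i,m+1}(x)$ into Lemma \ref{limit} to propagate the needed inequalities to a neighbourhood, patching over the points of $X$ and across the zero-dimensional boundaries of the covering sets with the compactness arguments already used in Proposition \ref{middle}.

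The main obstacle is exactly this verification of $g_{1,m}+g_{2,m}\ll g_1+g_2$. The difficulty is structural: the common refinement of the two open covers underlying $g_{1,m+1}$ and $g_{2,m+1}$ need not have multiplicity at most two on the one-dimensional space $X$, so a sum of piecewise characteristic sections need not itself be piecewise characteristic, and the structure theory of Section 3 cannot be invoked for it directly. One is therefore forced to argue by hand through the characterization of $\ll$ in Proposition \ref{ll}, carefully tracking the local induced-section representatives of the two summands and controlling their behaviour across the (zero-dimensional) boundaries, just as in the proof of Proposition \ref{middle}.
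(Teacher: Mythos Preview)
The paper itself offers no proof of this theorem beyond the phrase ``Assembling our observations we obtain the following,'' so your verification of axioms (i), (ii), and the suprema half of (iii) from Lemma~\ref{waybelow}, Proposition~\ref{increasing}, and the pointwise computation is exactly what the authors intend. Your instinct that the compatibility of $\ll$ with addition deserves a separate argument is sound; it does not follow formally from (i), (ii) and sup-compatibility alone.

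There is, however, a genuine gap in the route you choose for this last axiom. You reduce to showing $g_{1,m}+g_{2,m}\ll g_1+g_2$ for piecewise characteristic sections $g_{i,m}\ll g_{i,m+1}$ coming from Proposition~\ref{increasing}, and you take $a_x=g_{1,m+1}(x)+g_{2,m+1}(x)$ as the witness in Proposition~\ref{ll}(ii). Near $x$ one has $g_{i,m}=\hat{u'_i}$ and $g_{i,m+1}=\hat{u_i}$ for certain $u'_i,u_i\in S$ drawn from the defining data of the piecewise sections, but nothing guarantees $u'_i\ll u_i$ in $S$; you only know $\hat{u'_i}(x)\ll\hat{u_i}(x)$ in the stalk. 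Lemma~\ref{limit} needs a \emph{global} relation $s'\ll s$ in $S$ as input, so the step ``feed the relations $g_{i,m}(x)\ll g_{i,m+1}(x)$ into Lemma~\ref{limit}'' does not go through as written, and the detour through Proposition~\ref{middle}-style patching cannot repair this.

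The clean fix avoids piecewise characteristic sections altogether and simply reruns the proof of Proposition~\ref{ll} (i)$\Rightarrow$(ii) for the pair. Given $f_i\ll g_i$ and $x\in X$, use Lemma~\ref{lem:suprema}(ii) to write $g_i=\sup_n (g_i)_{V_n,s_n^i}$ with $(s_n^i)$ rapidly increasing in $S$ and a common decreasing sequence $(V_n)$ (obtained by intersection). Choose a common $n$ with $f_i\le (g_i)_{V_n,s_n^i}$ and set $a_x=\widehat{s_{n+1}^1+s_{n+1}^2}(x)$. Now the crucial relation $s_n^1+s_n^2\ll s_{n+1}^1+s_{n+1}^2$ holds globally in $S$ (because $S\in\Cu$), so given $s\in S$ with $a_x\ll\hat s(x)$ and $\hat s(y)\ll (g_1+g_2)(y)$ on $U$, Lemma~\ref{limit} yields a closed $V\subseteq U\cap V_n$ with $x\in\mathring V$ on which $(f_1+f_2)(y)\le\widehat{s_n^1+s_n^2}(y)\le\hat s(y)\le (g_1+g_2)(y)$. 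This verifies Proposition~\ref{ll}(ii) and gives $f_1+f_2\ll g_1+g_2$ directly.
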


The next result shows, in a particular case, the existence of an induced section between any two compactly contained
piecewise characteristic sections. 

\begin{proposition}\label{induced}
Let $X$ be a one dimensional compact Hausdorff space and let $A$ be a stable continuous field over $X$ with no $K_1$ obstructions. 
Let $f\ll g$ be elements in $\Gamma(X,F_{\Cu(A)})$ such that $g$ is a piecewise characteristic section. Then there exists an element $h\in A$ which satisfies $f(x)\leq [\pi_x (h)] \leq g(x)$ for all $x\in X$.
\end{proposition}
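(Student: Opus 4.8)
The plan is to exploit the very concrete structure of a piecewise characteristic section $g$ together with the pullback description of $A$ and the surjectivity results from the previous section. Recall that $g$ is defined relative to an open cover $\{U_i\}_{i=1}^n$ of multiplicity at most two with $\dim(\partial(\overline{U_i}))=0$, by elements $s_i\in S=\Cu(A)$ on the ``pure'' pieces $U_i\setminus\bigcup_{j\neq i}U_j$ and elements $s_{\{i,j\}}$ on the overlaps $U_i\cap U_j$, subject to the compatibility $\hat{s}_i(x)\leq\hat{s}_{\{i,j\}}(x)$ on $\overline{\partial(U_i\cap U_j)\cap U_i}$. Since $f\ll g$, Proposition~\ref{ll} gives local control: for each $x$ there is $a_x$ with $f(x)\ll a_x\ll g(x)$ witnessing the compact containment. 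The idea is to produce, on each closed piece of a suitable refinement of the cover, a representative in the relevant quotient $A(V)$ realizing the prescribed Cuntz class, and then glue these via the sheaf/pullback property.

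The key steps, in order, would be as follows. First, thicken the combinatorial data: replace $\{U_i\}$ by closed sets $V_i$, $\{i,j\}$-overlap pieces $V_{\{i,j\}}$, with zero dimensional boundaries, so that $X$ decomposes as a pullback $A=A(V)\oplus_{A(V\cap\overline U)}A(\overline U)$ in the manner used in Theorem~\ref{inj.}, iterated over the (at most doubly overlapping) pieces; this is where one uses \cite[Lemma 2.4]{Da}. Second, on each piece realize the target class: for a pure piece indexed by $i$, pick $b_i\in A$ with $[\pi_{x}(b_i)]=\hat{s}_i(x)$ on that piece — directly, since $s_i\in\Cu(A)$ already — and similarly $b_{\{i,j\}}$ for $s_{\{i,j\}}$; the content is matching them on the (zero dimensional) boundary overlaps up to Cuntz equivalence, which is handled using Lemma~\ref{limit} and Corollary~\ref{cor:extensiontonbd} to push the compatibility inequalities $\hat{s}_i(x)\leq\hat{s}_{\{i,j\}}(x)$ into small neighborhoods. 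Third, glue: using that $\Cu_A$ is a surjective sheaf (Corollary~\ref{thm:sheaf}) and the pullback characterization of $\Cu(A(W))$ from \cite[Theorem 3.2, Theorem 3.3]{APS}, the locally defined Cuntz classes, being compatible on overlaps, assemble into a single class $[h]\in\Cu(A)$ with $h\in A\otimes\cK$ (which we may take in $A$ since $A$ is stable). Finally, verify the two-sided estimate $f(x)\leq[\pi_x(h)]\leq g(x)$ for every $x$: the right inequality holds because by construction $[\pi_x(h)]$ agrees with $g(x)$ on the pure pieces and is $\leq\hat{s}_{\{i,j\}}(x)=g(x)$ on overlaps; the left inequality holds because $f\ll g$ via Proposition~\ref{ll} forces $f(y)\leq\hat{s}(y)$ for the gluing data $s$ on suitable neighborhoods.

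The main obstacle will be the gluing step over the overlap regions, specifically reconciling the fact that on $U_i\cap U_j$ the section takes the value $\hat{s}_{\{i,j\}}(x)$ while on the adjacent pure piece it takes $\hat{s}_i(x)$, with only the one-sided inequality $\hat{s}_i\leq\hat{s}_{\{i,j\}}$ available on the (zero dimensional) transition boundary. One cannot simply patch the representatives $b_i$ and $b_{\{i,j\}}$ because Cuntz equivalence, not equality, is all we control, and the restriction maps $\pi_V^U$ only respect Cuntz classes. The remedy is to work throughout at the level of $\Cu$: use the zero dimensionality of the boundaries $\partial(V_i)$ to reduce the gluing on the transition sets to the zero dimensional case (Remark~\ref{rem:pwdim0} and the argument of Proposition~\ref{middle}), where piecewise characteristic sections on pairwise disjoint clopen pieces assemble trivially, and then invoke the sheaf property of $\Cu_A$ to conclude that the globally compatible family of Cuntz classes is the image of a single $[h]\in\Cu(A)$. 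A secondary technical point is choosing all the thickenings $W_i^\epsilon$, $(V_i\setminus\cdots)^{\epsilon'}$ etc.\ with zero dimensional boundaries and mutually compatible overlaps, exactly as in the proof of Proposition~\ref{middle}; this is bookkeeping rather than a genuine difficulty, but it is what makes the sheaf-gluing applicable.
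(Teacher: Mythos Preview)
Your outline has a genuine gap at the gluing step, and it is precisely the obstacle you flag but do not resolve. The sheaf property of $\Cu_A$ (Corollary~\ref{thm:sheaf}) and the pullback theorems in \cite{APS} let you assemble a global Cuntz class from local ones only when the local classes \emph{agree} on overlaps. But the data of a piecewise characteristic section give you, at the transition sets $\overline{\partial(U_i\cap U_j)\cap U_i}$, only the inequality $\hat{s}_i(x)\leq\hat{s}_{\{i,j\}}(x)$, never equality. So the classes $\pi_{V}(s_i)$ and $\pi_{V}(s_{\{i,j\}})$ on adjacent pieces are \emph{not} compatible in the sense required for sheaf gluing, and your proposed reduction to the zero-dimensional boundary does nothing to fix this: on each clopen piece of the boundary you still have two distinct Cuntz classes with only an inequality between them. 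No amount of thickening or refinement produces the needed equality, because the section $g$ genuinely jumps there. What you would need is an interpolation at the $\Cu$ level between $\hat{s}_i$ and $\hat{s}_{\{i,j\}}$ across a neighbourhood of the transition set, and the category $\Cu$ simply does not provide such a mechanism.

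The paper's proof avoids this by working at the level of positive elements of $A$ rather than Cuntz classes. The key idea you are missing is this: since $A$ is stable and the inequality $\pi_{\overline W_{i,j}}([(a_i-\epsilon)_+])\leq\pi_{\overline W_{i,j}}([a_{\{i,j\}}])$ holds on a neighbourhood $W_{i,j}$ of the transition set (via Corollary~\ref{cor:extensiontonbd}), Proposition~\ref{here}(iv) yields a unitary $u_{i,j}$ in the unitization of $A(\overline W_{i,j})$ with $u_{i,j}\pi_{\overline W_{i,j}}((a_i-\epsilon)_+)u_{i,j}^*\in\mathrm{Her}(\pi_{\overline W_{i,j}}(a_{\{i,j\}}))$. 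Stability makes the unitary group of $\mathcal M(A)$ connected, so these unitaries lift to $\mathcal M(A)$ and can be joined to $1$ by norm-continuous paths; one then builds a global element
\[
h=\sum_i \lambda_i\, w_i(a_i-\epsilon)_+w_i^* \;+\; \sum_{i\neq j}\lambda_{\{i,j\}}\,a_{\{i,j\}}
\]
with a partition of unity. The point is that on the overlap $U_i\cap U_j$ the conjugated term lands in $\mathrm{Her}(\pi_x(a_{\{i,j\}}))$, so the whole sum is Cuntz equivalent to $\pi_x(a_{\{i,j\}})$ there, while on the pure piece it is equivalent to $\pi_x((a_i-\epsilon)_+)$. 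This hereditary-subalgebra trick is exactly the element-level interpolation that the $\Cu$-sheaf approach cannot supply.
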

\begin{proof}
Since $g$ is a piecewise characteristic section there is a cover $\{U_i\}_{i=1}^n$ of $X$ such that both $\{U_i\}$ and $\{\overline{U}_i\}$ have multiplicity at most $2$, and there are elements $[a_i]$, $[a_{\{i,j\}}]$ in $\Cu(A)$ which are the values that $g$ takes (according to Definition \ref{piecewise}).

For $\epsilon >0$, let $g_{\epsilon}$ be the section defined on the same cover as $g$ and that takes values $[(a_i-\epsilon)_+]$, $[a_{\{i,j\}}]$. As $g=\sup_{\epsilon} g_{\epsilon}$ and $f\ll g$, we may choose $\epsilon>0$ such that $f\leq g_{\epsilon}$, and in particular
\[
f(x)\leq \pi_x([(a_i-\epsilon)_+])\ll \pi_x([ a_i]) \text{ for all } x \text{ in }U_i\setminus (\cup_{j\neq i} U_j)\,.
\]
Notice now that the closed sets $\overline{\partial(U_i\cap U_j)\cap U_i}$ and $\overline{\partial(U_k\cap U_l)\cap U_l}$ are pairwise disjoint whenever $(i,j)\neq (k,l)$. (This follows from elementary arguments together with the assumption that the cover $\{\overline{U}_i\}$ has multiplicity at most $2$.)

Furthermore, by definition of $g$ we have $ \pi_x([a_i])\leq \pi_x([ a_{\{i,j\}}])$ for all $x\in \overline{\partial(U_i\cap U_j)\cap U_i}$. Therefore, there exists by Corollary \ref{cor:extensiontonbd} a neighborhood $W_{i,j}$ of $\overline{\partial(U_i\cap U_j)\cap U_i}$ for which 
\[
\pi_{\overline{W}_{i,j}}([a_i])\leq\pi_{\overline{W}_{i,j}}([a_{\{i,j\}}])\,.
\] 
We may assume without loss of generality that 
%each the $W_{i,j}$
%is a $\delta$-neighborhood of $\overline{\partial(U_i\cap U_j)\cap U_i}$ (for a fixed $\delta>0$), and that 
the closures $\overline{W}_{i,j}$ are  pairwise disjoint sets. Since also $\overline{\partial(U_i\cap U_j)\cap U_i}\cap\overline{U}_k=\emptyset$ whenever $k\neq i,j$, we may furthermore assume that $W_{i,j}\cap \overline{U}_k=\emptyset$ for $k\neq i,j$.

By Proposition \ref{here} there exist unitaries $u_{i,j}\in \mathcal{U}(A(\overline{W}_{i,j})^{\sim})$ such that
\[
u_{i,j}\pi_{\overline{W}_{i,j}}((a_i-\epsilon)_{+})u_{i,j}^{*}\in \mathrm{Her}(\pi_{\overline{W}_{i,j}}(a_{\{i,j\}}))\,.
\] 

Now, as $A$ and $A(\overline{W}_{i,j})$ are stable, the unitary groups of their multiplier algebras are connected in the norm topology (see, e.g. \cite[Corollary 16.7]{wo}). Furthermore, since the natural map $\pi_{\overline{W}_{i,j}}\colon A\to A(\overline{W}_{i,j})$ induces a surjective morphism $\mathcal{M}(A)\to\mathcal{M}(A(\overline{W}_{i,j}))$ (by, e.g. \cite[Theorem 2.3.9]{wo}), we can find, for each unitary $u_{i,j}$, a unitary lift $\tilde{u}_{i,j}$ in $\mathcal{M}(A)$.

We now have continuous paths of unitaries $w_{i,j}\colon  [0,1] \to\mathcal{U}(\mathcal{M}(A))$ such that $w_{i,j}(0)=1$ and $w_{i,j}(1)=\tilde{u}_{i,j}$. Put $\gamma=\mathrm{min}\{\mathrm{dist}(\overline{W}_{i,j},\overline{W}_{k,l}\mid (i,j)\neq (k,l)\}$. Note that $\gamma>0$ as the sets $\overline{W}_{i,j}$ are pairwise disjoint. For $x\in X$, define a unitary in $\mathcal{M}(A)$ by 
\[
w_{i,j}^x=
	w_{i,j}\left(\frac{(\gamma-\mathrm{dist}(x,W_{i,j}))_+}{\gamma}\right)\,.
\]
Observe that, if $x\in W_{k,l}$, then $w_{i,j}^x=\tilde{u}_{i,j}$ if $(k,l)=(i,j)$ and equals $1$ otherwise. Now put
\[
w_i^x=\prod_{j}w_{i,j}^x\,.
\]
Since each $\pi_x$ is norm decreasing and the $w_i^x$ are defined by products and compositions of continuous functions,
we obtain, using \cite[Definition 10.3.1]{Dixmier}, that for each $c\in A$, the tuple $(\pi_x(w_i^xc))_{x\in X}\in \prod_{x\in X} A_x$ defines fiberwise an element 
in $A$ which we denote by $w_ic$. 

Now let $\{\lambda_i\}_i$ be continuous positive real-valued functions
on $[0,1]$ whose respective supports are $\{(U_i\setminus (\cup_{j\neq i} U_j))\cup (\cup_j W_{i,j}) \}_{i}$ and 
$\{\lambda_{\{i,j\}}\}_{i,j}$ with supports $\{U_i\cap U_j\}_{i,j}$. Define the following element in $A$
\[h=\sum_{i} \lambda_i w_{i}(a_i-\epsilon)_+ w_{i}^{*} +\sum_{i\neq j} \lambda_{\{i,j\}} a_{\{i,j\}}.\]

We now check that $[\pi_x (h)]=g_\epsilon(x)$, and this will yield the desired conclusion. 

%We denote $W=\cup_{i,j} W_{i,j}$. 
%On the one hand,
If $x\in U_i\setminus (\cup_{j\neq i} U_j)$, then 
$\pi_x(h)=\lambda_i(x)\pi_x(w_i(a_i-\epsilon)_+w_i^{*})$ where $\lambda_i(x)\neq 0$, and this is equivalent to $\pi_x((a_i-\epsilon)_+)$. Hence $[\pi_x(h)]=g_\epsilon(x)$. 

On the other hand, if $x\in U_i\cap U_j$ for some $i,j$ then $\lambda_{\{i,j\}}(x)\neq 0$, and
\[ \pi_{x}(h)=\left\{ \begin{array}{ll}
\lambda_{i}(x)\pi_x(\tilde{u}_{i,j}(a_i-\epsilon)_+\tilde{u}_{i,j}^*) + \lambda_{\{i,j\}}(x)\pi_{x}(a_{\{i,j\}}) & \text{ if }  x\in U_i\cap U_j \cap W_{i,j}\\
\lambda_{j}(x)\pi_x(\tilde{u}_{j,i}(a_j-\epsilon)_+\tilde{u}_{j,i}^*) + \lambda_{\{i,j\}}(x)\pi_{x}(a_{\{i,j\}}) & \text{ if }  x\in U_i\cap U_j \cap W_{j,i}\\
    \lambda_{\{i,j\}}(x)\pi_{x}(a_{\{i,j\}})  & \text{ if } x\in U_i\cap U_j\setminus (W_{i,j}\cup W_{j,i})\,.
\end{array}\right. \]
If, for example, $x\in U_i\cap U_j \cap W_{i,j}$, then $\pi_x(\tilde{u}_{i,j}(a_i-\epsilon)_+\tilde{u}_{i,j}^*)\in \mathrm{Her}(\pi_x (a_{\{i,j\}}))$, and we conclude that $[\pi_{x}(h)]=[\pi_x(a_{\{i,j\}})]=g_{\epsilon}(x)$. The other cases are treated similarly. 

\end{proof}

This last result (together with Proposition \ref{increasing}) proves that, with some restrictions on $X$ and $A$, the set of induced sections is a dense subset of $\Gamma(X,F_{\Cu(A)})$, that is, every element in $\Gamma(X,F_{\Cu(A)})$ is a supremum of a rapidly increasing sequence of induced sections.

\begin{theorem}\label{isomorfisme}
Let $X$ be a one dimensional compact Hausdorff space and let $A$ be a continuous field over $X$ without $K_1$ obstructions. Then, the map 
\[
\begin{array}{cccc}
                     \alpha:&   \Cu(A)&\to &\Gamma(X,F_{\Cu(A)}) \\
                            & s &\mapsto &\hat{s}
                  \end{array}
\]
is an order isomorphism in $\Cu$.
\end{theorem}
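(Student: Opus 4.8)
The plan is to show that $\alpha$ is a well-defined morphism in $\Cu$ which is both an order embedding and has dense range, and then invoke the characterization of $\Cu$-isomorphisms. First, by Theorem \ref{inj.} we already know that $\alpha$ is an order embedding of $\Cu(A)$ into $\prod_{x\in X}\Cu(A_x)$, and since each induced section $\hat s$ is continuous (as verified right after the definition of the topology on $F_{\Cu(A)}$), $\alpha$ actually lands in $\Gamma(X, F_{\Cu(A)})$. That $\alpha$ preserves compact containment and suprema of increasing sequences is exactly Lemma \ref{waybelow}, applied to the sheaf $\Cu_A$, which is a surjective continuous sheaf by Corollary \ref{thm:sheaf}. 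So $\alpha$ is a morphism in $\Cu$ that is an order embedding; it remains only to prove surjectivity, i.e.\ that every continuous section is in the image.

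To get surjectivity, I would argue that the range of $\alpha$ is dense in $\Gamma(X, F_{\Cu(A)})$ and then upgrade density to surjectivity using the order-embedding property together with the structure of $\Cu$. For density: fix $f\in\Gamma(X, F_{\Cu(A)})$. Since $\Cu(A)$ is countably based (as $A$ is separable), so is $\Gamma(X,F_{\Cu(A)})$, and by Proposition \ref{increasing} we can write $f=\sup_n g_n$ for a rapidly increasing sequence $(g_n)$ of piecewise characteristic sections in $\chi(f)$. For each $n$ we have $g_n\ll g_{n+1}\ll f$ with $g_{n+1}$ piecewise characteristic, so Proposition \ref{induced} provides $h_n\in A$ with $g_n(x)\le [\pi_x(h_n)]\le g_{n+1}(x)$ for all $x$, i.e.\ $g_n\le \alpha([h_n])\le g_{n+1}$ in $\Gamma(X,F_{\Cu(A)})$. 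Then $(\alpha([h_n]))_n$ is an increasing sequence sandwiched between the $g_n$, so $\sup_n\alpha([h_n])=\sup_n g_n=f$; thus $f$ lies in the closure of the range of $\alpha$, proving density.

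Finally, I would promote ``dense range + order embedding + morphism in $\Cu$'' to ``surjective.'' The point is that if $(\alpha([h_n]))$ is rapidly increasing in $\Gamma(X,F_{\Cu(A)})$ with supremum $f$, I want to see that $(h_n)$ can be arranged to be rapidly increasing in $\Cu(A)$, so that $h:=\sup_n [h_n]$ exists in $\Cu(A)$ and $\alpha(h)=\sup_n \alpha([h_n])=f$ since $\alpha$ preserves suprema of increasing sequences. To arrange rapid increase, one uses the order embedding: from $\alpha([h_n])\ll \alpha([h_{n+1}])$ together with the standard interpolation in $\Cu$ one extracts, after passing to a subsequence, elements with $[h_n]\ll [h_{n+1}]$ in $\Cu(A)$ itself; here the key is that $\alpha$ reflects the order (order embedding) and that compact containment in the range can be detected via intermediate induced sections coming from Proposition \ref{induced} again. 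Then $\alpha(\sup_n[h_n]) = f$, so $\alpha$ is onto.

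The main obstacle, and the step I would spend the most care on, is this last promotion from density to surjectivity: one must genuinely exploit that $\alpha$ is an order embedding (not merely order preserving) to transfer the rapidly-increasing structure from $\Gamma(X,F_{\Cu(A)})$ back to $\Cu(A)$, and check that the resulting supremum in $\Cu(A)$ is sent to $f$. Everything else is assembly of results already proved — Theorem \ref{inj.}, Corollary \ref{thm:sheaf}, Lemma \ref{waybelow}, Proposition \ref{increasing}, and Proposition \ref{induced} — but this final bookkeeping, ensuring the lifts $h_n$ interleave correctly and that no information is lost in the limit, is where the argument has to be done carefully.
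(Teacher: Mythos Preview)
Your approach is essentially the same as the paper's: use Theorem \ref{inj.} for the order embedding, Lemma \ref{waybelow} for preservation of suprema and $\ll$, Proposition \ref{increasing} to write $f$ as a supremum of piecewise characteristic sections, and Proposition \ref{induced} to interpolate induced sections. The assembly is correct.

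However, you are manufacturing a difficulty in the final step that does not exist. You have $g_n\le\alpha([h_n])\le g_{n+1}$, so $(\alpha([h_n]))_n$ is increasing in $\Gamma(X,F_{\Cu(A)})$. Because $\alpha$ is an order \emph{embedding}, this immediately gives that $([h_n])_n$ is increasing in $\Cu(A)$ --- no passage to subsequences, no ``interpolation in $\Cu$,'' and no rapid increase required. Suprema of \emph{increasing} (not necessarily rapidly increasing) sequences exist in any object of $\Cu$; that is axiom (i). Hence $h:=\sup_n[h_n]$ exists, and by Lemma \ref{waybelow}, $\alpha(h)=\sup_n\alpha([h_n])=\sup_n g_n=f$. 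This is exactly what the paper does (in two lines), and your worry about lifting compact containment back through $\alpha$ is unnecessary: an order embedding reflects $\le$, which is all you need here.
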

\begin{proof}
Let $f$ be a continuous section in $\Gamma(X,F_{\Cu(A)}) $ and use Propositions \ref{increasing} and \ref{induced} to 
write $f$ as the supremum of a rapidly increasing sequence of induced sections $f=\sup_{n} \hat s_n$. Since $\alpha$ is an order embedding (by Theorem \ref{inj.}) and $\alpha(s_n)= \hat s_n$, the 
sequence $s_n$ is also increasing in $\Cu(A)$ and thus we can define $s=\sup_n s_n\in \Cu(A)$. The result now follows using Lemma \ref{waybelow}.
\end{proof}

Since the conditions on the fibers in the previous Theorem are satisfied by simple AI-algebras we obtain the following.

\begin{corollary}
 Let  $X$ be a one dimensional compact Hausdorff space and let $A$ be a continuous field over $X$ such that $A_x$ is a simple AI-algebra for all $x\in X$. Then $\Cu(A)\cong \Gamma (X,F_{\Cu(A)})$.
\end{corollary}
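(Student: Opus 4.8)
The plan is to reduce the corollary to an immediate application of Theorem \ref{isomorfisme}. The theorem establishes that for any one dimensional compact Hausdorff space $X$ and any continuous field $A$ over $X$ without $K_1$ obstructions, the natural map $\alpha\colon\Cu(A)\to\Gamma(X,F_{\Cu(A)})$, $s\mapsto\hat s$, is an order isomorphism in $\Cu$. Thus the only thing to verify here is that a continuous field whose fibers are all simple AI-algebras falls within the scope of that theorem, i.e.\ that it has no $K_1$ obstructions in the sense of the Definition preceding Theorem \ref{inj.}.

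First I would recall the definition: $A$ has no $K_1$ obstructions provided each fiber $A_x$ has stable rank one and $K_1(I)=0$ for every closed two-sided ideal $I$ of $A_x$. So the task is purely about the fibers, and it suffices to check that a simple AI-algebra $B$ (that is, an inductive limit of finite direct sums of algebras of the form $\C([0,1],M_n)$, with simple limit) satisfies both conditions. For stable rank one: each building block $\C([0,1],M_n)$ has stable rank one, stable rank one passes to finite direct sums, and it is preserved under inductive limits with injective connecting maps (by Rieffel's results on topological stable rank); hence $B$ has stable rank one. Since $B$ is simple, its only closed two-sided ideals are $0$ and $B$ itself, so the ideal condition reduces to checking $K_1(B)=0$. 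Here one uses continuity of $K_1$ under inductive limits together with the fact that $K_1(\C([0,1],M_n))=K_1(\C([0,1]))=0$ (as $[0,1]$ is contractible), so $K_1$ of each finite stage vanishes and therefore $K_1(B)=\varinjlim K_1(B_i)=0$.

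Having checked that every fiber $A_x$ of such a continuous field has stable rank one and trivial $K_1$, and noting that simplicity makes the ideal condition vacuous beyond the algebra itself, we conclude that $A$ is a continuous field over a one dimensional compact Hausdorff space without $K_1$ obstructions. Theorem \ref{isomorfisme} then applies verbatim and yields the order isomorphism $\Cu(A)\cong\Gamma(X,F_{\Cu(A)})$ in $\Cu$, which is exactly the assertion of the corollary. I do not anticipate a genuine obstacle: the entire argument is a verification of hypotheses, and the only mild care needed is citing the correct permanence properties of stable rank one under inductive limits with injective (or at least unital) connecting maps, and the continuity of $K_1$; both are standard and could be referenced to the literature on AI-algebras (e.g.\ the structure theory used in \cite{ciupercaelliott}).
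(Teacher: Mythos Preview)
Your proposal is correct and follows exactly the approach indicated in the paper: the corollary is stated immediately after Theorem \ref{isomorfisme} with the remark that simple AI-algebras satisfy the hypotheses on the fibers, and your argument supplies precisely this verification (stable rank one for AI-algebras, vanishing $K_1$ by continuity of $K$-theory, and simplicity reducing the ideal condition to the algebra itself). One minor point: stable rank one is preserved under arbitrary inductive limits of C$^*$-algebras, so you need not restrict to injective connecting maps.
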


\section{The sheaf $\Cu_A(\_)$}

For a compact Hausdorff space $X$, denote by $\mathcal{C}_X$ the category whose 
objects are the $\mathrm{C}(X)$-algebras, and the morphisms between objects are those $^*$-homomorphisms such that commute with the (respective) structure maps.

Denote by $\mathcal{S}_{\Cu}$ the category which as objects has the presheaves $\Cu_A(\_)$ on $X$, where 
$A$ belongs to $\mathcal{C}_X$, and the maps are presheaf homomorphisms.
The following holds by definition:
\begin{lemma}
The assignment
\[
\begin{array}{cccc}
    \Cu_{(\_)}\colon & \mathcal{C}_X&\to& \mathcal{S}_{\Cu}\\
    &A&\mapsto& \Cu_A(\_)\,
    \end{array}
\]
is a covariant functor.
\end{lemma}

\begin{theorem}
\label{prop:isoseccions}
Let $X$ be a one dimensional compact Hausdorff space and let $A$ be a continuous field over $X$ without $K_1$ obstructions. Consider the functors
\[
\begin{array}{ccccccccccc}
    \Cu_{A}(\_):& \mathcal{V}_X&\to    & \Cu       &    &   \text { and }   & &     \Gamma(\_,F_{\Cu_{A(\_)}})\colon & \mathcal{V}_X&\to    & \Cu        \\
                        & V          &\mapsto& \Cu(A(V)) &    &                   & &                               & V          &\mapsto& \Gamma(V,F_{\Cu_{A(V)}})\,.
    \end{array}
\]
Then,  $\Cu_{A}(\_)$ and $\Gamma(\_,F_{\Cu_{A(\_)}})$ are isomorphic sheaves.
\end{theorem}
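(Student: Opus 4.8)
The plan is to verify that the natural transformation $\alpha$ between the two functors, given on each $V\in\mathcal V_X$ by the map $\alpha_V\colon\Cu(A(V))\to\Gamma(V,F_{\Cu_{A(V)}})$, $s\mapsto\hat s$, is an isomorphism of sheaves. The main point is that almost all the work has already been done: Theorem~\ref{isomorfisme} establishes that, for a one dimensional compact Hausdorff space and a continuous field without $K_1$ obstructions, $\alpha$ is an order isomorphism in $\Cu$. So the strategy is (1) check that both assignments really are sheaves of semigroups on $X$, (2) check that $\alpha$ is a natural transformation, i.e. that it commutes with the restriction maps, and (3) apply Theorem~\ref{isomorfisme} fiberwise, over each $V$, to conclude that each component $\alpha_V$ is an isomorphism, so that $\alpha$ is an isomorphism of sheaves.

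First I would record that $\Cu_A(\_)$ is a surjective continuous sheaf by Corollary~\ref{thm:sheaf}. For the second functor, I would note that for each $V\in\mathcal V_X$, the restriction $A(V)$ is again a continuous field over $V$ (which is one dimensional) without $K_1$ obstructions, so $\Gamma(V,F_{\Cu_{A(V)}})\in\Cu$ by the theorem preceding Proposition~\ref{induced} (using that $\Cu(A(V))$ is countably based since $A(V)$ is separable). The restriction maps for this functor: given $W\subseteq V$ in $\mathcal V_X$, a continuous section $f\colon V\to F_{\Cu_{A(V)}}$ should restrict to a continuous section of $F_{\Cu_{A(W)}}$ over $W$, using that for $x\in W$ the stalk $(\Cu_{A(V)})_x$ is canonically identified with $(\Cu_{A(W)})_x$ (both are $\lim_{x\in\mathring U}\Cu(A(U))$ with $U$ ranging over neighborhoods inside $W$, which is cofinal among all neighborhoods) — this is where one uses that $W$ has nonempty interior, so $x\in\mathring W$ for $x$ in the interior, and a limiting argument on the boundary. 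That $\Gamma(\_,F_{\Cu_{A(\_)}})$ is a sheaf can then be deduced from the sheaf property of $\Cu_A$ together with the fact that each $\alpha_V$ is an isomorphism, or verified directly by gluing sections; I would take the former route once naturality is in hand.

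Next I would check naturality of $\alpha$: for $W\subseteq V$ we need $\pi_W^V\circ\alpha_V=\alpha_W\circ\pi_W^V$, where on the left $\pi_W^V$ is the restriction of sections and on the right it is $\Cu(\pi_W^V)\colon\Cu(A(V))\to\Cu(A(W))$. Unwinding, both sides send $s\in\Cu(A(V))$ to the section $x\mapsto\pi_x(s)\in(\Cu_{A(W)})_x$ for $x\in W$: on the left we first form $\hat s$ on $V$ and then restrict, on the right we first push $s$ to $\Cu(A(W))$ and then form its induced section; these agree because $\pi_x$ factors as $\pi_x^W\circ\pi_W^V$ at the level of fibers. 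This is a routine diagram chase using $\pi_x=\pi^V_x=\pi^W_x\circ\pi^V_W$.

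Finally, the conclusion: for each $V\in\mathcal V_X$, apply Theorem~\ref{isomorfisme} with $X$ replaced by $V$ and $A$ replaced by $A(V)$ (legitimate since $V$ is a one dimensional compact Hausdorff space and $A(V)$ is a continuous field over $V$ without $K_1$ obstructions) to get that $\alpha_V$ is an order isomorphism in $\Cu$. Since $\alpha$ is a natural transformation all of whose components are isomorphisms, it is an isomorphism of presheaves, hence of sheaves. I do not expect any serious obstacle here; the one step requiring a little care is the identification of stalks $(\Cu_{A(V)})_x\cong(\Cu_{A(W)})_x$ for $x$ in the interior of $W$ and the behavior of restriction of sections near $\partial(W)$, i.e. making precise that the restriction maps for the second functor are well defined and land in $\Gamma(W,F_{\Cu_{A(W)}})$; everything else is bookkeeping and an appeal to Theorem~\ref{isomorfisme} and Corollary~\ref{thm:sheaf}.
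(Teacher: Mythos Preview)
Your proposal is correct and follows essentially the same approach as the paper: invoke Corollary~\ref{thm:sheaf} for the sheaf property of $\Cu_A$, apply Theorem~\ref{isomorfisme} to each $A(V)$ to obtain the componentwise isomorphisms, and observe naturality via the factorization $\pi_x=\pi^W_x\circ\pi^V_W$. The paper's proof is terser---it simply asserts that the relevant square ``clearly commutes''---whereas you spell out the stalk identification and the restriction maps for the section functor, but the underlying argument is the same.
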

\begin{proof}
That  $\Cu_{A}(\_)$ is a sheaf follows from Corollary \ref{thm:sheaf}. Let $(h_V)_{V\in\mathcal{V}_X}$ be the collection of isomorphisms $h_V\colon\Cu(A(V))\to \Gamma(V,F_{\Cu_{A(V)}})$ described in Theorem \ref{isomorfisme}. 
Since, whenever $V\subset U$, the following diagram
\[
\xymatrix{
\Cu(A(V))\ar[r] & \Gamma(V,F_{\Cu_{A(V)}})\\
\Cu(A(U))\ar[u] _{(\Cu_{A}(\_))^{U}_{V}}\ar[r]&\Gamma(U,F_{\Cu_{A(U)}})\ar[u]_{(\Gamma(\_,F_{\Cu_{A(\_)}}))^{U}_{V}}
}
\]
clearly commutes, $(h_V)_{V\in\mathcal{V}_X}$ defines an isomorphism of sheafs $h\colon \Cu_{A}(\_)\to \Gamma(\_,F_{\Cu_{A(\_)}})$.
\end{proof}

In order to relate the Cuntz semigroup $\Cu(A)$ and the sheaf $\Cu_A(\_)$,  we now show that there exists an action of $\Cu(\mathrm{C}(X))$ on $\Cu(A)$ when $A$ is a $\mathrm{C}(X)$-algebra, which is naturally induced from the $\mathrm{C}(X)$-module structure on $A$.
\begin{definition}
\label{def:bimorphism}
Let $S$, $T$, $R$ be semigroups in $\Cu$. A $\ll$-bimorphism is a map $\varphi\colon S\times T\to R$ such that the map $\varphi(s,\_)\colon T\to R$, $s\in S$ (respectively, $\varphi(\_,t)\colon S\to R$, $t\in T$), preserves order, addition, suprema of increasing sequences, and moreover $\varphi(s',t')\ll\varphi(s,t)$ whenever $s'\ll s$ in $S$ and $t'\ll t$ in $T$.
\end{definition}

\begin{remark}
\label{rem:commute} {\rm We remark that if $A$ is a C$^*$-algebra and $a$, $b$ are commuting elements, then for $\epsilon >0$ we have $(a-\epsilon)_+(b-\epsilon)_+\preceq (ab-\epsilon^2)_+$. Indeed, since the C$^*$-subalgebra generated by $a$ and $b$ is commutative, Cuntz comparison is given by the support of the given elements, viewed as continuous functions on the spectrum of the algebra. It is then a simple matter to check that $\mathrm{supp}((a-\epsilon)_+(b-\epsilon)_+)\subseteq \mathrm{supp}((ab-\epsilon^2)_+)$.}
\end{remark}

\begin{proposition}
\label{prop:bimorphism}
Let $A$ and $B$ be stable and nuclear C$^*$-algebras. Then, the natural bilinear map $A\times B\to A\otimes B$ given by $(a,b)\mapsto a\otimes b$ induces a $\ll$-bimorphism
\[
\begin{array}{ccc} \Cu(A)\times\Cu(B) & \to  &\Cu(A\otimes B)\\
([a],[b]) & \mapsto & [a\otimes b]
\end{array}
\]
\end{proposition}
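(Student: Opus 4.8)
The plan is to verify the four requirements of Definition \ref{def:bimorphism} directly for the assignment $\varphi([a],[b])=[a\otimes b]$. Throughout I would use that $A$, $B$ and $A\otimes B$ are stable, so that classes in $\Cu(A)$, $\Cu(B)$ and $\Cu(A\otimes B)$ may be represented by positive elements of $A$, $B$ and $A\otimes B$ respectively; the properties of Cuntz comparison in Proposition \ref{here}; the standard facts that $[a]=\sup_n[(a-1/n)_+]$ along a rapidly increasing sequence, and hence that $[a']\ll[a]$ if and only if $a'\preceq(a-\delta)_+$ for some $\delta>0$; the elementary fact that $\|x-y\|<\epsilon$ implies $(x-\epsilon)_+\preceq y$ (cf.\ \cite[Lemma 2.2]{RK}); and the commuting-elements observation of Remark \ref{rem:commute}. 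I would first establish order preservation and well-definedness: if $a\preceq a'$ in $A_+$ and $b\preceq b'$ in $B_+$, pick $(x_n)\subseteq A$, $(y_n)\subseteq B$ with $x_na'x_n^*\to a$ and $y_nb'y_n^*\to b$; then
\[
(x_n\otimes y_n)(a'\otimes b')(x_n\otimes y_n)^*=(x_na'x_n^*)\otimes(y_nb'y_n^*)\longrightarrow a\otimes b
\]
by the triangle inequality and the boundedness of $\|y_nb'y_n^*\|$, so $a\otimes b\preceq a'\otimes b'$. Using this with Cuntz-equivalent representatives shows $\varphi$ is well defined, and that $\varphi(\_,t)$ and $\varphi(s,\_)$ preserve the order; additivity in each variable is immediate from $\mathrm{diag}(a_1,a_2)\otimes b=\mathrm{diag}(a_1\otimes b,a_2\otimes b)$ in $M_2(A\otimes B)$, compatibly with the identifications defining addition in $\Cu$.

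Next I would check preservation of suprema of increasing sequences in each variable, say the first. If $([a_n])$ is increasing in $\Cu(A)$ with supremum $[a]$, order preservation gives $\sup_n[a_n\otimes b]\le[a\otimes b]$. For the reverse inequality, fix $\epsilon>0$; since $(a-\delta)_+\otimes(b-\delta)_+\to a\otimes b$ in norm as $\delta\to0$, choose $\delta>0$ with $(a\otimes b-\epsilon)_+\preceq(a-\delta)_+\otimes(b-\delta)_+$. As $[(a-\delta)_+]\ll[a]=\sup_n[a_n]$, there is $m$ with $(a-\delta)_+\preceq a_m$, whence by order preservation
\[
(a\otimes b-\epsilon)_+\preceq(a-\delta)_+\otimes(b-\delta)_+\preceq a_m\otimes b.
\]
Letting $\epsilon\to0$ and using $[a\otimes b]=\sup_\epsilon[(a\otimes b-\epsilon)_+]$ yields $[a\otimes b]\le\sup_n[a_n\otimes b]$.

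Finally, for the $\ll$-condition, suppose $[a']\ll[a]$ and $[b']\ll[b]$, so that $a'\preceq(a-\delta_1)_+$ and $b'\preceq(b-\delta_2)_+$ for some $\delta_1,\delta_2>0$; put $\delta=\min\{\delta_1,\delta_2\}$, so order preservation gives $a'\otimes b'\preceq(a-\delta)_+\otimes(b-\delta)_+$. The positive elements $a\otimes1$ and $1\otimes b$ commute in $\widetilde{A}\otimes\widetilde{B}$, which contains $A\otimes B$ as an ideal, and since $(a\otimes1-\delta)_+=(a-\delta)_+\otimes1$, $(1\otimes b-\delta)_+=1\otimes(b-\delta)_+$ and $(a\otimes1)(1\otimes b)=a\otimes b$, Remark \ref{rem:commute} applied with $\epsilon=\delta$ gives
\[
(a-\delta)_+\otimes(b-\delta)_+\preceq(a\otimes b-\delta^2)_+
\]
(a relation among elements of $A\otimes B$, hence valid there). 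Therefore $[a'\otimes b']\le[(a\otimes b-\delta^2)_+]\ll[a\otimes b]$, so $[a'\otimes b']\ll[a\otimes b]$, completing the verification.

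I expect the genuine content to be concentrated in this last step. The order-theoretic requirements follow mechanically once order preservation is in hand, but compact containment does not transfer through $\otimes$ by any naive truncation argument, since $a\otimes b$ is not a function of $(a-\delta)_+\otimes(b-\delta)_+$; it is precisely the commuting-elements estimate of Remark \ref{rem:commute} --- sandwiching $(a-\delta)_+\otimes(b-\delta)_+$ below $(a\otimes b-\delta^2)_+$ --- that resolves this, and everything else is routine.
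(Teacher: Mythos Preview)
Your proof is correct and follows the same overall plan as the paper's: verify each clause of Definition~\ref{def:bimorphism} directly, with the substantive content residing in the $\ll$-preservation step, which in both arguments is reduced to Remark~\ref{rem:commute} by passing to an ambient unital algebra where $a\otimes 1$ and $1\otimes b$ make sense and commute.

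The one noteworthy difference is in how that reduction is carried out. The paper works in $\mathcal{M}(A)\otimes\mathcal{M}(B)$ and argues indirectly: from $[(a-\epsilon)_+]\ll[a]$ in $\Cu(\mathcal{M}(A))$ it uses functoriality of $\Cu$ along $c\mapsto c\otimes 1$ to get $[(a-\epsilon)_+\otimes 1]\ll[a\otimes 1]$, and only then extracts a new $\epsilon'$ with $[(a-\epsilon)_+\otimes 1]\le[(a\otimes 1-\epsilon')_+]$ before invoking Remark~\ref{rem:commute}. You instead work in $\widetilde{A}\otimes\widetilde{B}$ and observe directly, via functional calculus through the $^*$-homomorphism $c\mapsto c\otimes 1$, that $(a\otimes 1-\delta)_+=(a-\delta)_+\otimes 1$; this lets you apply Remark~\ref{rem:commute} with the original $\delta$ and skip the functoriality-and-extraction detour entirely. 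Your route is a genuine streamlining of the same idea. (Your remark that the resulting subequivalence, established in $\widetilde{A}\otimes\widetilde{B}$, descends to the ideal $A\otimes B$ is correct and worth keeping explicit.)
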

\begin{proof}
Since $A$ is stable, we may think of $\Cu(A)$ as equivalence classes of positive elements from $A$. We also have an isomorphism $\Theta\colon M_2(A)\to A$ given by isometries $w_1$, $w_2$ in $\mathcal{M}(A)$ with orthogonal ranges, so that $\Theta(a_{ij})=\sum_{i,j}w_ia_{ij}w_j^*$. Thus, in the Cuntz semigroup, $[a]+[b]=[\Theta\left(\begin{smallmatrix} a & 0 \\ 0 & b \end{smallmatrix}\right)]$.

The map $\Cu(A)\times\Cu(B)\to \Cu(A\otimes B)$ given by $([a],[b])\mapsto [a\otimes b]$ is well defined and order-preserving in each argument, by virtue of \cite[Lemma 4.2]{R}. Let $a$, $a'\in A_+$, $b\in B_+$. As
\[
[(w_1aw_1^*+w_2a'w_2^*)\otimes b]=[w_1aw_1^*\otimes b]+[w_2a'w_2^*\otimes b]=[a\otimes b]+[a'\otimes b]\,,
\]
we see that it is additive in the first entry (and analogously in the second entry).

Next, observe that if  $\|a\|,\|b\|\leq 1$, $\epsilon>0$,
\[
\|a\otimes b-(a-\epsilon)_{+}\otimes(b-\epsilon)_{+}\|\leq\|a\otimes b-(a-\epsilon)_{+}\otimes b\|+\|(a-\epsilon)_{+}\otimes b-(a-\epsilon)_{+}\otimes(b-\epsilon)_{+}\|\leq
\]
\[
\epsilon\|b\|+\|(a-\epsilon)_{+}\|\epsilon\leq 2\epsilon\,,
\]
and this implies $[ a\otimes b]=\sup([ (a-\epsilon)_{+}\otimes(b-\epsilon)_{+}])$. If now $[a]=\sup_n [a_n]$ for an increasing sequence $([a_n])$, then for any $[b]$ we have $[a_n\otimes b]\leq [a\otimes b]$. Given $\epsilon>0$, find $n$ with $[(a-\epsilon)_+]\leq [a_n]$, hence $[(a-\epsilon)_+\otimes (b-\epsilon)_+]\leq [a_n\otimes b]\leq\sup [a_n\otimes b]$. Taking supremum when $\epsilon$ goes to zero we obtain $[a\otimes b]=\sup [a_n\otimes b]$.

Finally, assume that $[a']\ll [a]$ in $\Cu(A)$, and $[b']\ll [b]$ in $\Cu (B)$. Find $\epsilon>0$ such that $[a']\leq [(a-\epsilon)_+]$ and $[b']\leq [(b-\epsilon)_+]$. Then $[a'\otimes b']\leq [(a-\epsilon)_+\otimes (b-\epsilon)_+]$.

Note that $(a-\epsilon)_+\otimes (b-\epsilon)_+\in A\otimes B\subseteq \mathcal{M}(A)\otimes\mathcal{M}(B)$ and, viewed in the tensor product of the multiplier algebras, we have $(a-\epsilon)_+\otimes (b-\epsilon)_+=((a-\epsilon)_+\otimes 1)(1\otimes(b-\epsilon)_+)$. Since $\mathcal{M}(A)\to \mathcal{M}(A)\otimes\mathcal{M}(B)$, $c\mapsto c\otimes 1$ is a $^*$-homomorphism, it induces a semigroup homomorphism $\Cu(\mathcal{M}(A))\to \Cu(\mathcal{M}(A)\otimes\mathcal{M}(B))$ in the category $\Cu$ and, in particular, since $[(a-\epsilon)_+]\ll [a]$ in $\Cu(\mathcal{M}(A))$, it follows that $[(a-\epsilon)_+\otimes 1]\ll [a\otimes 1]$ in $\Cu(\mathcal{M}(A)\otimes \mathcal{M}(B))$. Likewise, $[1\otimes (b-\epsilon)_+]\ll [1\otimes b]$, hence we may find $\epsilon'>0$ such that $[(a-\epsilon)_+\otimes 1]\leq [(a\otimes 1-\epsilon')_+]$ and $[1\otimes (b-\epsilon)_+]\leq [(1\otimes b-\epsilon')_+]$. Since the elements $(a-\epsilon)_+\otimes 1$, $(a\otimes 1-\epsilon')_+$, $1\otimes (b-\epsilon)_+$ and $(1\otimes b-\epsilon')_+$ all commute (and using Remark \ref{rem:commute}), it follows that
\begin{align*}
[(a-\epsilon)_+\otimes (b-\epsilon)_+] & =  [((a-\epsilon)_+\otimes 1)(1\otimes (b-\epsilon)_+)]\\ & \leq [(a\otimes 1-\epsilon')_+(1\otimes b-\epsilon')_+]\\ & \leq [(a\otimes b-\epsilon'^2)_+]\ll [a\otimes b]\,,
\end{align*}
whence $[a'\otimes b']\ll [a\otimes b]$.
\end{proof}
\begin{corollary}
\label{cor:action} Let $X$ be a compact Hausdorff space, and let $A$ be a stable $\mathrm{C}(X)$-algebra (with structure map $\theta$). Then the natural map $\mathrm{C}(X)\times A\to A$, given by $(f,a)\to \theta(f)a$ induces a $\ll$-bimorphism
\[
\gamma_A\colon \Cu(\mathrm{C}(X))\times\Cu(A)\to \Cu(A)
\]
such that maps $([f],[a])$ to $[\theta(f)a]$, for $f\in \mathrm{C}(X)_+$ and $a\in A_+$.
\end{corollary}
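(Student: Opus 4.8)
The plan is to deduce Corollary~\ref{cor:action} from Proposition~\ref{prop:bimorphism} by realizing the module action of $\mathrm{C}(X)$ on $A$ as the composition of the external tensor product map with the $^*$-homomorphism coming from the structure map. First I would use the hypothesis that $A$ is stable together with $\mathrm{C}(X)\otimes\mathcal{K}\cong\mathrm{C}(X,\mathcal{K})$: since both $\mathrm{C}(X)\otimes\mathcal{K}$ and $A$ are stable and nuclear (every commutative $\mathrm{C}^*$-algebra is nuclear), Proposition~\ref{prop:bimorphism} applies to give a $\ll$-bimorphism
\[
\Cu(\mathrm{C}(X))\times\Cu(A)\to \Cu\bigl((\mathrm{C}(X)\otimes\mathcal{K})\otimes A\bigr)\cong\Cu(\mathrm{C}(X)\otimes A\otimes\mathcal{K})\cong\Cu(\mathrm{C}(X)\otimes A)\,,
\]
sending $([f],[a])$ to the class of $f\otimes a$ (here I have used that $\Cu$ is insensitive to tensoring with $\mathcal{K}$, as is implicit throughout the paper, and that $\Cu(\mathrm{C}(X))=\Cu(\mathrm{C}(X)\otimes\mathcal{K})$).

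Next I would bring in the structure map. The unital $^*$-homomorphism $\theta\colon\mathrm{C}(X)\to Z(\mathcal{M}(A))$ together with the identity on $A$ induces, by the universal property of the maximal (equivalently, since $\mathrm{C}(X)$ is nuclear, the minimal) tensor product, a $^*$-homomorphism $m\colon \mathrm{C}(X)\otimes A\to A$ determined by $m(f\otimes a)=\theta(f)a$; this is nothing but the multiplication map coming from the $\mathrm{C}(X)$-module structure, and it is well defined precisely because $\theta$ has central range, so that the image of $\mathrm{C}(X)$ commutes with the image of $A$ inside $\mathcal{M}(A)$. By functoriality of $\Cu$ this yields a morphism $\Cu(m)\colon\Cu(\mathrm{C}(X)\otimes A)\to\Cu(A)$ in the category $\Cu$, which in particular preserves order, addition, suprema of increasing sequences and the relation $\ll$. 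I then define $\gamma_A$ as the composite of the bimorphism above with $\Cu(m)$; on elementary pairs it sends $([f],[a])$ to $[\theta(f)a]$, as required.

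It remains to check that $\gamma_A$ is again a $\ll$-bimorphism, and this is where the only real (though still routine) work lies: one must verify that post-composing a $\ll$-bimorphism with a $\Cu$-morphism preserves all four properties in Definition~\ref{def:bimorphism}. Fixing one variable, $\gamma_A(s,\_)=\Cu(m)\circ\varphi(s,\_)$ is a composite of $\Cu$-morphisms, hence a $\Cu$-morphism, so order, addition and suprema in each variable are immediate; for the joint $\ll$-condition, if $s'\ll s$ and $a'\ll a$ then $\varphi(s',a')\ll\varphi(s,a)$ by Proposition~\ref{prop:bimorphism}, and applying $\Cu(m)$ — which preserves $\ll$ — gives $\gamma_A(s',a')\ll\gamma_A(s,a)$. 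The main obstacle, such as it is, is purely bookkeeping: keeping the stabilizations $\otimes\mathcal{K}$ consistent across the three algebras $\mathrm{C}(X)$, $A$ and $\mathrm{C}(X)\otimes A$ so that the identification $\Cu(\mathrm{C}(X))=\Cu(\mathrm{C}(X)\otimes\mathcal{K})$ and the isomorphism $(\mathrm{C}(X)\otimes\mathcal{K})\otimes(A\otimes\mathcal{K})\cong(\mathrm{C}(X)\otimes A)\otimes\mathcal{K}$ intertwine the elementary tensors correctly; this is handled exactly as the stabilization conventions were handled in the Preliminaries, using \cite[Lemma 1.5]{APS} and the fact that inner isomorphisms act trivially on $\Cu$.
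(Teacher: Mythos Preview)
Your proposal is correct and follows essentially the same route as the paper: apply Proposition~\ref{prop:bimorphism} to $\mathrm{C}(X)\otimes\mathcal{K}$ and $A$, identify $\Cu(\mathrm{C}(X)\otimes\mathcal{K}\otimes A)$ with $\Cu(\mathrm{C}(X)\otimes A)$, and then post-compose with the $\Cu$-morphism induced by the multiplication map $f\otimes a\mapsto\theta(f)a$. You supply more detail than the paper does (the universal property argument for $m$, the verification that a $\Cu$-morphism composed with a $\ll$-bimorphism is again a $\ll$-bimorphism, and the stabilization bookkeeping), but the architecture is identical. One small wording issue: you assert that $A$ is nuclear, which is not part of the hypothesis; what you actually need---and what your parenthetical already provides---is that $\mathrm{C}(X)\otimes\mathcal{K}$ is nuclear, so that the tensor product with $A$ is unambiguous and Proposition~\ref{prop:bimorphism} applies as written.
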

\begin{proof}
Since $\Cu(\mathrm{C}(X))=\Cu(\mathrm{C}(X)\otimes\mathcal K)$, Proposition \ref{prop:bimorphism} tells us that the map
\[
\begin{array}{ccc} \Cu(\mathrm{C}(X)\otimes\mathcal K)\times \Cu(A) &\to &\Cu(\mathrm{C}(X)\otimes\mathcal{K}\otimes A)\\
([f],[a]) & \mapsto & [f\otimes a]\end{array}
\]
is a $\ll$-bimorphism. Now the result follows after composing this map with the isomorphism $\Cu(\mathrm{C}(X)\otimes\mathcal{K}\otimes A)\cong \Cu(\mathrm{C}(X)\otimes A)$, followed by the map $\Cu(\theta)\colon \Cu(\mathrm{C}(X)\otimes A)\to \Cu(A)$.
\end{proof}

In what follows, we shall refer to the $\ll$-bimorphism $\gamma_A$ above as the action of $\Cu(\mathrm{C}(X))$ on $\Cu(A)$. If $A$ and $B$ are $\mathrm{C}(X)$-algebras, we will say then that a morphism $\varphi\colon \Cu(A)\to \Cu(B)$ \emph{preserves the action} provided $\varphi(\gamma_A(x,y))=\gamma_B(x,\varphi(y))$. Notice that this is always the case if $\varphi$ is induced by a $^*$-homomorphism of $\mathrm{C}(X)$-algebras. We will write $\gamma$ instead of $\gamma_A$, and we will moreover use the notation  $xy$ for $\gamma(x,y)$.

As noticed above, $\mathbb{V}_A(\_)$ defines a continuous presheaf, and we show below that it becomes a sheaf when $A$ does not have $K_1$ obstructions. For this we need a lemma (see \cite{APS}).

\begin{lemma}\label{pb}
Let $X$ be a one dimensional compact Hausdorff space and let $Y$, $Z\subseteq X$ be closed subsets of $X$. Let $A$ be a continuous field over $X$ without $K_1$ obstructions, and denote by $\pi^Z_Y\colon A(Y)\to A(Y\cap Z)$ 
and $\pi^Y_Z\colon A(Z)\to A(Y\cap Z)$ the quotient maps (given by restriction). Then, the map
\[\beta\colon V(A(Y)\oplus_{A(Y\cap Z)}A(Z))\to V(A(Y))\oplus_{V(A(Y\cap Z))}V(A(Z))\] defined by $\beta([(a,b)])=([a],[b])$ 
is an isomorphism.
\end{lemma}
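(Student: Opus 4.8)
The plan is to identify the pullback $A(Y)\oplus_{A(Y\cap Z)}A(Z)$ with $A(Y\cup Z)$, and then to analyze the natural map $\beta$ by reducing questions about $V(-)$ to questions about $\Cu(-)$, where the pullback behavior is already understood via \cite[Theorems 3.2 and 3.3]{APS}. First I would recall, exactly as in the proof of Corollary \ref{thm:sheaf}, that $A(Y\cup Z)\cong A(Y)\oplus_{A(Y\cap Z)}A(Z)$ along the restriction maps (using \cite[Lemma 2.4]{Da} or \cite[Proposition 10.1.13]{Dixmier}), and that $A(Y\cup Z)$ is again a $\mathrm{C}(Y\cup Z)$-algebra without $K_1$ obstructions. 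The map $\beta$ is then just the natural map $V(A(Y\cup Z))\to V(A(Y))\oplus_{V(A(Y\cap Z))}V(A(Z))$ sending $[p]\mapsto ([\pi_Y(p)],[\pi_Z(p)])$, and the task is to show it is bijective.

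For \textbf{injectivity}: suppose $p$, $q$ are projections over $A(Y\cup Z)$ (after stabilizing) with $\pi_Y(p)\sim \pi_Y(q)$ and $\pi_Z(p)\sim\pi_Z(q)$ in the respective $V$-semigroups. Since $A(Y)$, $A(Z)$ have stable rank one, Murray-von Neumann equivalence of projections agrees with Cuntz equivalence on projections, so $[\pi_Y(p)]=[\pi_Y(q)]$ and $[\pi_Z(p)]=[\pi_Z(q)]$ in the Cuntz semigroups, hence the images of $[p]$ and $[q]$ agree in $\Cu(A(Y))\oplus_{\Cu(A(Y\cap Z))}\Cu(A(Z))$. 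By Theorem \ref{inj.} (or directly \cite[Theorem 3.2]{APS}, via the order-embedding $\Cu(A(Y\cup Z))\hookrightarrow \prod_x\Cu(A_x)$) the natural map $\Cu(A(Y\cup Z))\to\Cu(A(Y))\oplus_{\Cu(A(Y\cap Z))}\Cu(A(Z))$ is an order-embedding, so $[p]=[q]$ in $\Cu(A(Y\cup Z))$; since both are projections and $A(Y\cup Z)$ has stable rank one, this gives $[p]=[q]$ in $V(A(Y\cup Z))$.

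For \textbf{surjectivity}: take projections $p$ over $A(Y)$ and $q$ over $A(Z)$ with $[\pi^Z_Y(p)]=[\pi^Y_Z(q)]$ in $V(A(Y\cap Z))$. Again using stable rank one of $A(Y\cap Z)$, we get $[\pi^Z_Y(p)]=[\pi^Y_Z(q)]$ in $\Cu(A(Y\cap Z))$, so $([p],[q])$ lies in the Cuntz pullback; by \cite[Theorem 3.3]{APS} (surjectivity of the pullback map for $\Cu$) there is a positive element $c$ over $A(Y\cup Z)$ with $[\pi_Y(c)]=[p]$ and $[\pi_Z(c)]=[q]$. The remaining — and I expect \emph{main} — obstacle is to replace $c$ by a genuine projection over $A(Y\cup Z)$ that still restricts correctly. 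Here I would argue that $[c]$ is a compact element of $\Cu(A(Y\cup Z))$: it restricts to compact elements in every fiber $\Cu((A(Y\cup Z))_x)$ (since $[p]$, $[q]$ come from projections and fibers have stable rank one, so compact elements of the fiber Cuntz semigroups are exactly classes of projections), and compactness can be detected through the order-embedding into $\prod_x \Cu((A(Y\cup Z))_x)$ together with the sheaf/continuity structure — concretely, one shows $c$ is Cuntz-equivalent to a projection by a local-to-global patching argument analogous to Proposition \ref{induced}, using that $Y\cup Z$ is one-dimensional, that $0\in\partial(U)$-type sets can be handled, and that over each piece the lift of a projection to a projection is available because the relevant ideals have trivial $K_1$ and the algebras have stable rank one (so projections lift along quotients, cf. the standard $K_0$-exactness and $\mathrm{sr}=1$ lifting). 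Having produced a projection $r$ with $[r]=[c]$ in $\Cu(A(Y\cup Z))$, we get $\beta([r])=([p],[q])$ in $V(A(Y))\oplus_{V(A(Y\cap Z))}V(A(Z))$ (the equalities $[\pi_Y(r)]=[p]$, $[\pi_Z(r)]=[q]$ hold first in $\Cu$ and then in $V$, again by stable rank one), completing surjectivity. I would expect the projection-lifting/patching step to require the most care, and it is precisely there that the no-$K_1$-obstructions hypothesis is used in an essential way.
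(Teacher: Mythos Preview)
Your injectivity argument is fine and matches the paper's in spirit. The surjectivity argument, however, is considerably more complicated than necessary, and the step you flag as the ``main obstacle'' is in fact immediate once you use the full strength of what you already have.

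You use \cite[Theorem 3.3]{APS} only to get \emph{surjectivity} of the map
\[
\Cu(A(Y\cup Z))\longrightarrow \Cu(A(Y))\oplus_{\Cu(A(Y\cap Z))}\Cu(A(Z)),
\]
and then try to show by hand that the preimage $[c]$ of $([p],[q])$ is compact, via fiberwise detection and a patching argument. But by Corollary~\ref{thm:sheaf} (equivalently, combining \cite[Theorems 3.2 and 3.3]{APS} with Theorem~\ref{inj.}) this map is an \emph{isomorphism in} $\Cu$, hence its inverse preserves $\ll$, and therefore it restricts to a bijection between the compact elements on each side. Now $([p],[q])$ is compact in the pullback semigroup: if $([p],[q])\leq\sup_n(s_n,t_n)$ then $[p]\leq\sup_n s_n$ and $[q]\leq\sup_n t_n$, and compactness of $[p]$, $[q]$ gives $([p],[q])\leq (s_m,t_m)$ for large $m$. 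Hence $[c]$ is automatically compact in $\Cu(A(Y\cup Z))$. Since $A(Y\cup Z)$ is stably finite (all fibers have stable rank one), the compact elements of $\Cu(A(Y\cup Z))$ are exactly the classes of projections, so $[c]\in V(A(Y\cup Z))$ with no further work. This is exactly the paper's route.

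Your proposed alternative --- detecting compactness of $[c]$ through the order-embedding into $\prod_x\Cu(A_x)$ --- is not clearly valid as stated: an element whose image in every fiber is compact need not be compact globally, so the patching argument you sketch would genuinely have to produce a global projection, and you have not indicated how the pieces glue (the analogy with Proposition~\ref{induced} is loose, since that proposition produces positive elements, not projections). It may be salvageable, but it is unnecessary given the one-line argument above.
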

\begin{proof}
We know from Corollary \ref{thm:sheaf} that $\Cu_A$ is a sheaf in this case. Thus the map 
\[
\Cu(A(Y)\oplus_{A(Y\cap Z)}A(Z))\to \Cu(A(Y))\oplus_{\Cu(A(Y\cap Z))}\Cu(A(Z))\,,
\]
given by $[(a,b)]\mapsto ([a],[b])$, is an isomorphism in $\Cu$, whence it maps compact elements to compact elements. Since $A(Y)\oplus_{A(Y\cap Z)}A(Z)$ is isomorphic to $A(Y\cup Z)$ and this algebra is stably finite (because all of its fibers have stable rank one), we have that the compact elements of $\Cu(A(Y)\oplus_{A(Y\cap Z)}A(Z))$ can be identified with $V(A(Y)\oplus_{A(Y\cap Z)}A(Z))$.

Using this identification, we have that $[(a,b)]$ in $\Cu(A(Y)\oplus_{A(Y\cap Z)}A(Z))$ is compact if and only if $[a]$ and $[b]$ are compact. On the other hand, if $[a]$ and $[b]$ are compact (in $\Cu(A(Y))$ and $\Cu(A(Z))$ respectively) and $[a]=[b]$ in $\Cu(A(Y\cap Z)$, then the pair $([a],[b])$ belongs to $V(A(Y))\oplus_{V(A(Y\cap Z))}V(A(Z))$, and every element of this pullback is obtained in this manner. The conclusion now follows easily.
\end{proof}

\begin{proposition}
Let $X$ be a one dimensional compact Hausdorff space and let $A$ be a continuous field over $X$ without $K_1$ obstructions.
Then, $$\begin{array}{cccc}
    \mathbb{V}_A(\_):& \mathcal{V}_X&\to& \text{Sg}\\
    &U&\mapsto& V(A(U))\,
    \end{array}$$ 
is a sheaf and the natural transformation
$\mathbb{V}_A(\_)\to \Gamma(\_, F_{V(A(\_))})$ is an isomorphism of sheaves.
\end{proposition}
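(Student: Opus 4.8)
The plan is to deduce both assertions from what is already known about the Cuntz sheaf $\Cu_A$, using that for algebras without $K_1$ obstructions the Murray--von Neumann semigroup is exactly the ``compact part'' of the Cuntz semigroup.

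For the sheaf property I would argue as follows. Given $U,V\in\mathcal{V}_X$ with $U\cap V\in\mathcal{V}_X$, the algebra $A(U\cup V)$ is the pullback $A(U)\oplus_{A(U\cap V)}A(V)$ along the restriction maps (as in the proof of Theorem~\ref{inj.}). Lemma~\ref{pb} then says that $V(A(U\cup V))\to V(A(U))\oplus_{V(A(U\cap V))}V(A(V))$, $[(a,b)]\mapsto([a],[b])$, is an isomorphism; since the target is precisely $\{(f,g)\in V(A(U))\times V(A(V))\mid \pi^{U}_{U\cap V}(f)=\pi^{V}_{U\cap V}(g)\}$, this is the sheaf condition, and together with the already noted fact that $\mathbb{V}_A$ is a continuous presheaf of semigroups it follows that $\mathbb{V}_A$ is a continuous sheaf.

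For the isomorphism of sheaves, the first step is to identify $\mathbb{V}_A$ with the ``compact part'' of $\Cu_A$. As in the proof of Lemma~\ref{pb}, $A(V)$ is stably finite (its fibres have stable rank one), so the compact elements of $\Cu(A(V))$ are precisely the classes of projections, that is, $V(A(V))$; this is natural in $V$ (morphisms in $\Cu$ preserve $\ll$, hence compact elements), so $\mathbb{V}_A$ is canonically the subpresheaf of compact elements of $\Cu_A$. Passing to stalks and using continuity of the functor $V(\_)$ on $\mathrm{C}^*$-inductive limits, applied to $A_x=\varinjlim_{x\in\mathring W}A(W)$, gives $(\mathbb{V}_A)_x\cong V(A_x)$, again the compact part of $\Cu(A_x)$; so $F_{V(A(\_))}$ sits inside $F_{\Cu(A(\_))}$ as its ``compact part'', and the canonical map $s\mapsto\hat s$ on $\mathbb{V}_A(V)$ is the restriction of the order isomorphism $\Cu(A(V))\cong\Gamma(V,F_{\Cu(A(V))})$ of Theorem~\ref{isomorfisme}. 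Being an isomorphism in $\Cu$, the latter matches compact elements with compact elements, so it restricts to a bijection between $V(A(V))$ and the compact elements of $\Gamma(V,F_{\Cu(A(V))})$; it then remains to verify that these compact elements are exactly the sections that make up $\Gamma(V,F_{V(A(V))})$. Naturality in $V$ (the squares with the restriction maps commute, exactly as in Theorem~\ref{prop:isoseccions}) finally turns these pointwise isomorphisms into an isomorphism of sheaves.

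The main obstacle is that last identification. One must show that every continuous section of $F_{V(A(V))}$ is induced by a projection class --- equivalently, is compact as an element of $\Gamma(V,F_{\Cu(A(V))})$ --- and, a priori, a continuous section of $F_{\Cu(A(V))}$ that takes compact values in every stalk need not itself be compact. I expect the way around this is to exploit one-dimensionality: on each connected piece a compact continuous section is ``piecewise characteristic with compact data'', so one reruns the argument of Section~3 (Propositions~\ref{increasing} and \ref{induced}) while keeping track of the compactness of the local values $s_i$, $s_{\{i,j\}}$; equivalently, one checks straight from the definition of the topology on $F_{V(A(V))}$ that the basic open sets already pin a continuous section down, locally, to an induced one. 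Everything else is routine once Lemma~\ref{pb}, Theorem~\ref{isomorfisme}, and the identification of the compact Cuntz elements with projection classes are available.
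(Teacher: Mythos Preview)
Your argument for the sheaf property via Lemma~\ref{pb} is exactly what the paper does. The divergence is in the second part.

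The paper's proof of the isomorphism $\mathbb{V}_A(\_)\cong\Gamma(\_,F_{V(A(\_))})$ is a one-line citation of the classical result (Theorem~2.2 in \cite{WE}): for an ordinary sheaf of sets (or semigroups), the sheaf is always isomorphic to the sheaf of continuous sections of its \'etal\'e space. The point is that $\mathbb{V}_A$ lands in $\mathrm{Sg}$, not in $\Cu$; its stalks are honest algebraic inductive limits (and indeed $V(\varinjlim A(W))\cong\varinjlim V(A(W))$), so none of the $\Cu$-specific machinery of Sections~2--3 is needed, and the standard \'etal\'e-space argument applies directly.

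Your route instead passes through Theorem~\ref{isomorfisme} and tries to recover $\Gamma(\_,F_{V(A(\_))})$ as the compact part of $\Gamma(\_,F_{\Cu(A(\_))})$. This can presumably be made to work, but it is considerably longer and, as you yourself flag, it leaves a genuine gap: you must show that a continuous section of $F_{\Cu(A(V))}$ taking compact values in every fibre is itself compact in $\Gamma(V,F_{\Cu(A(V))})$. That statement is not obvious (fibrewise compactness does not formally imply global compactness), and your sketch of ``rerunning Propositions~\ref{increasing} and~\ref{induced} with compact data'' is not yet a proof. There is also a background ambiguity in your write-up: the topology on $F_{V(A(\_))}$ relevant here is the classical \'etal\'e topology, not the $U_s^{\gg}$-topology introduced for $\Cu$-valued presheaves, and part of your obstacle is really the comparison of these two topologies on the compact locus. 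The paper sidesteps all of this by staying entirely within the classical framework.
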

\begin{proof}
Note that $\mathbb{V}_A(\_)$ is a sheaf thanks to Lemma \ref{pb}. On the other hand, the fact that $\mathbb{V}_A(\_)$ is isomorphic to the sheaf of continuous sections $\Gamma(\_, F_{V(A(\_))})$ follows from Theorem 2.2 in \cite{WE}.
\end{proof}

\begin{theorem}\label{V(A)}
Let $X$ be a compact Hausdorff space and let $A$ and $B$ be $\mathrm{C}(X)$-algebras such that all fibers have stable rank one. Consider the following conditions:
\begin{enumerate}[\rm(i)]
 \item $\Cu(A)\cong \Cu(B)$ preserving the action of $\Cu(\mathrm{C}(X))$,
\item $\Cu_A(\_)\cong \Cu_B(\_) $,
\item $\mathbb{V}_A(\_)\cong\mathbb{V}_B(\_) $.
\end{enumerate}
Then {\rm (i)} $\implies$ {\rm (ii)} $\implies$ {\rm (iii)}. If $X$ is one dimensional, then also {\rm (ii)} $\implies$ {\rm (i)}. If, furthermore, $A$ and $B$ are continuous fields without $K_1$ obstructions such that for all $x\in X$ the fibers $A_x$, $B_x $ have real rank zero, then {\rm (iii)} $\implies$ {\rm (ii)} and so all three conditions are equivalent.
\end{theorem}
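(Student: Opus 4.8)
The plan is to establish the implications in the order (i)$\Rightarrow$(ii)$\Rightarrow$(iii), then (ii)$\Rightarrow$(i), and finally (iii)$\Rightarrow$(ii), the last being where the real work lies.

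\emph{(i)$\Rightarrow$(ii).} The point is that the action of $\Cu(\mathrm{C}(X))$ detects the ideals $C_0(X\setminus U)A$. Fix $U\in\mathcal{V}_X$. Since $A(U)=A/C_0(X\setminus U)A$, the result of \cite{crobsant} quoted in Section~1 gives $\Cu(A(U))\cong\Cu(A)/\Cu(C_0(X\setminus U)A)$. I claim $\Cu(C_0(X\setminus U)A)$ equals the order-ideal $J_U^A$ of $\Cu(A)$ generated by $\{\gamma_A(x,y)\mid x\in\Cu(C_0(X\setminus U)),\,y\in\Cu(A)\}$. The inclusion $J_U^A\subseteq\Cu(C_0(X\setminus U)A)$ is clear, since $\theta(f)a$ vanishes on $U$ whenever $f$ does. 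Conversely, if $[a]\in\Cu(C_0(X\setminus U)A)$ and $\epsilon>0$, the set $\{x\mid \|a(x)\|\ge\epsilon\}$ is a closed subset of the open set $X\setminus U$, so by Urysohn there is $f\in C_0(X\setminus U)_+$ with $\theta(f)(a-\epsilon)_+=(a-\epsilon)_+$ (two elements of a $\mathrm{C}(X)$-algebra with the same image in every fibre coincide); hence $[(a-\epsilon)_+]=\gamma_A([f],[(a-\epsilon)_+])\in J_U^A$ and $[a]=\sup_\epsilon[(a-\epsilon)_+]\in J_U^A$. As $\Cu(\mathrm{C}(X))$ and its order-ideal $\Cu(C_0(X\setminus U))$ are common to $A$ and $B$, an action-preserving isomorphism $\varphi\colon\Cu(A)\to\Cu(B)$ carries $J_U^A$ onto $J_U^B$, hence descends to an isomorphism $\Cu(A(U))\cong\Cu(B(U))$; these are compatible with restrictions, giving $\Cu_A(\_)\cong\Cu_B(\_)$.

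\emph{(ii)$\Rightarrow$(iii) and (ii)$\Rightarrow$(i).} For the first: each $A(U)$ is stably finite (its fibres have stable rank one), so, exactly as in the proof of Lemma~\ref{pb}, $V(A(U))$ is the subsemigroup of compact elements of $\Cu(A(U))$; a morphism in $\Cu$ preserves $\ll$, hence compact elements, so an isomorphism $\Cu_A(\_)\cong\Cu_B(\_)$ restricts to an isomorphism of the subsheaves of compact elements, i.e.\ $\mathbb{V}_A(\_)\cong\mathbb{V}_B(\_)$. For (ii)$\Rightarrow$(i): an isomorphism $h\colon\Cu_A(\_)\to\Cu_B(\_)$ yields in particular a $\Cu$-isomorphism $h_X\colon\Cu(A)=\Cu_A(X)\to\Cu_B(X)=\Cu(B)$, and it remains to check it preserves the action. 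It also induces compatible isomorphisms $h_x\colon\Cu(A)_x\to\Cu(B)_x$ on stalks; since the quotient maps $\pi_x\colon A\to A_x$ are $\mathrm{C}(X)$-algebra homomorphisms (with $A_x$ a $\mathrm{C}(\{x\})$-algebra), one has $\pi_x(\gamma_A([f],y))=\rho_x([f])\cdot\pi_x(y)$, where $\rho_x\colon\Cu(\mathrm{C}(X))\to\Cu(\mathbb{C})=\overline{\mathbb{N}}$ is the evaluation map (depending only on $X$) and $\cdot$ is the canonical $\overline{\mathbb{N}}$-action carried by every object of $\Cu$. As $h_x$ is a morphism in $\Cu$ it preserves this canonical action and commutes with $\pi_x$, so $\pi_x^B\bigl(h_X(\gamma_A([f],y))\bigr)=\pi_x^B\bigl(\gamma_B([f],h_X(y))\bigr)$ for all $x$; since $X$ is one dimensional, Theorem~\ref{inj.} gives that $\Cu(B)\hookrightarrow\prod_x\Cu(B_x)$ is an order-embedding, whence $h_X(\gamma_A([f],y))=\gamma_B([f],h_X(y))$. (Equivalently, one may run this through the identification $\Cu(A)\cong\Gamma(X,F_{\Cu(A)})$ of Theorem~\ref{isomorfisme}, under which the action of $\Cu(\mathrm{C}(X))$ becomes fibrewise the canonical $\overline{\mathbb{N}}$-action, which $h$ respects fibre by fibre.)

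\emph{(iii)$\Rightarrow$(ii).} This is the substantive implication, and the strategy is to reconstruct the sheaf $\Cu_A(\_)$ from $\mathbb{V}_A(\_)$. By Theorem~\ref{prop:isoseccions}, $\Cu_A(\_)\cong\Gamma(\_,F_{\Cu_{A(\_)}})$, while $\mathbb{V}_A(\_)$ is the sheaf of continuous sections of $F_{V(A(\_))}$ (the Proposition preceding Theorem~\ref{V(A)}); so it suffices to recover the bundle $F_{\Cu_{A(U)}}=\sqcup_{x\in U}\Cu(A_x)$, with its topology, from $F_{V(A(U))}=\sqcup_{x\in U}V(A_x)$, naturally in $U$. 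On fibres this is possible because $A_x$ has stable rank one and real rank zero: then $V(A_x)\to\Cu(A_x)$ is an order-embedding onto the compact elements, and real rank zero forces that set to be order-dense (every element of $\Cu(A_x)$ is a supremum of a rapidly increasing sequence of projection classes), so $\Cu(A_x)$ is obtained from the ordered monoid $V(A_x)$ by the canonical completion inside $\Cu$, which is functorial. Since the stalks of $\mathbb{V}_A(\_)$ are the $V(A_x)$ (continuity of $V$), an isomorphism $\mathbb{V}_A(\_)\cong\mathbb{V}_B(\_)$ yields compatible isomorphisms $V(A_x)\cong V(B_x)$, hence compatible $\Cu$-isomorphisms $\Cu(A_x)\cong\Cu(B_x)$; and because the topology-generating sets $U_s^{\gg}$ of $F_{\Cu(A)}$ are expressible through the compact elements and the completion data, these glue to homeomorphisms $F_{\Cu_{A(U)}}\to F_{\Cu_{B(U)}}$ over $U$, compatibly in $U$, so that Theorem~\ref{prop:isoseccions} gives $\Cu_A(\_)\cong\Cu_B(\_)$. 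The main obstacle is precisely this reconstruction step: verifying rigorously that the full $\Cu$-structure of the fibres (order, $\ll$, suprema) \emph{and} the topology of the bundle $F_{\Cu(A)}$ are recovered functorially from the Murray--von Neumann data of the fibres, i.e.\ that the ``$V$-to-$\Cu$ completion'' is natural and compatible with the section and bundle constructions of Sections~2--3; the real-rank-zero hypothesis on the fibres is exactly what makes this completion faithful.
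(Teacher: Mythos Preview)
Your treatments of (i)$\Rightarrow$(ii), (ii)$\Rightarrow$(iii), and (iii)$\Rightarrow$(ii) are essentially the paper's arguments. For (i)$\Rightarrow$(ii) you phrase things via the order-ideal $J_U^A$ generated by the action, while the paper checks directly that $\varphi([fa])=[fb]$ lands in the right ideal; these are equivalent. For (iii)$\Rightarrow$(ii) your outline matches the paper: extend the fibrewise isomorphisms $V(A_x)\to V(B_x)$ to $\Cu(A_x)\to\Cu(B_x)$ using density of projection classes (real rank zero), then argue that the induced bijection $\tilde\varphi\colon F_{\Cu(A)}\to F_{\Cu(B)}$ is a homeomorphism and invoke Theorem~\ref{prop:isoseccions}. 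The paper fills in exactly the step you flag as the obstacle: to see $\tilde\varphi^{-1}(U_s^{\gg})$ is open at $z\in\Cu(A_x)$, one chooses a compact $z'\in V(A_x)$ with $\hat{s''}(x)\ll\varphi_x(z')$, \emph{lifts $z'$ to some $v\in V(A(W'))$ using surjectivity of the sheaf $\mathbb{V}_A$}, and then uses Lemma~\ref{limit} on $\widehat{\varphi_{W'}(v)}$ to produce a basic neighbourhood $\mathring{W}^{\gg}_t$ of $z$ mapping into $U_s^{\gg}$. So the missing ingredient in your sketch is this local lifting of compact fibre elements to $V(A(W'))$, which is what lets you compare the topologies.

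Your argument for (ii)$\Rightarrow$(i), however, has a genuine gap. You conclude by invoking Theorem~\ref{inj.} to deduce equality in $\Cu(B)$ from equality of all fibre images. But Theorem~\ref{inj.} requires $B$ to be a $\mathrm{C}(X)$-algebra \emph{without $K_1$ obstructions}, whereas at this point in the statement the only hypothesis is that the fibres have stable rank one (the no-$K_1$-obstructions assumption enters only for (iii)$\Rightarrow$(ii)). Without that extra hypothesis you have no reason to believe $\Cu(B)\to\prod_x\Cu(B_x)$ is injective. The paper avoids this by taking a different route: it uses that $\Cu(\mathrm{C}(X))\cong\mathrm{Lsc}(X,\overline{\mathbb{N}})$ for one-dimensional $X$, reduces to checking $\varphi(\mathds{1}_U[a])=\mathds{1}_U\varphi([a])$, and then runs a support-and-approximation argument inside $\Cu(B)$ (using \cite[Lemmas~2.4,~2.5]{ADPS}) rather than passing to the fibre product. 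Your fibrewise idea is cleaner when it applies, but as written it proves (ii)$\Rightarrow$(i) only under the stronger no-$K_1$-obstructions hypothesis, not under the hypothesis actually stated.
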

\begin{proof}
We may assume that both $A$ and $B$ are stable.

(i) $\implies$ (ii):  Let $\varphi\colon\Cu(A)\to \Cu(B)$ be an isomorphism such that $\varphi(xy)=x\varphi(y)$, for any $x\in\Cu(\mathrm{C}(X))$ and $y\in \Cu(A)$. We need to verify that $\varphi(\Cu(\C_0(X\setminus V)A)\subseteq \Cu(\C_0(X\setminus V)B)$, whenever $V$ is a closed subset of $X$. This will entail that $\varphi$ induces a semigroup map $\varphi_V\colon\Cu(A(V))\to \Cu(B(V))$, which is an isomorphism as $\varphi$ is.

Let $[fa]\in\Cu(\C_0(X\setminus V)A)$, for $f\in \C_0(X\setminus V)_+$ and $a\in A_+$. Then, if $\varphi([a])=[b]$ for some $b\in B_+$, we have that $\varphi([fa])=[f]\varphi([a])=[f][b]=[fb]$, and $fb\in \C_0(X\setminus V)B$. Thus $\varphi(\Cu(\C_0(X\setminus V)A)\subseteq\varphi(\C_0(X\setminus V)B)$. The rest of the argument is routine.

(ii) $\implies$ (iii): Note that, as all fibers have stable rank one, $\Cu(A(U))$ (respectively, $\Cu(B(U))$) is a stably finite algebra for
 each closed subset $U$. In this case, $V(A(U))$ can be identified with the subset of compact elements of $\Cu(A(U))$. 
Therefore, the given isomorphism $\Cu_A(U)\cong\Cu_B(U)$ maps $\mathbb{V}_A(U)=V(A(U))$ injectively onto $\mathbb{V}_B(U)=V(B(U))$.

Now assume that $X$ is one dimensional, and let us prove that (ii) $\implies$ (i): The isomorphism of sheaves gives, in particular, an isomorphism $\varphi\colon \Cu(A)\to\Cu(B)$. We need to verify that $\varphi$ respects the action of $\Cu(\mathrm{C}(X))$. By \cite{Robert0}, $\Cu(\mathrm{C}(X))\cong \mathrm{Lsc}(X,\overline{\mathbb{N}})$.
In this case, any $f\in \mathrm{Lsc}(X,\overline{\mathbb{N}})$ may be written as: 
$$f=\sum^{\infty}_{i=1}\mathds{1}_{U_i} \text{ where } U_i=f^{-1}((i,\infty]).$$

Thus, in order to check that $\varphi(f[a])=f\varphi([a])$, it is enough to verify it when $f=\mathds{1}_U$, where $U$ is an open set of $X$.

Notice that $\mathds{1}_U[a]=[ga]$ where $g\in \mathrm{C}(X)_+$ has $\supp(g)=U$. Given $[a]\in \Cu(A)$ we denote by $\supp([a])=\{x\in X\mid \pi_x([a])\neq 0\}$. Observe that $\supp\varphi([a])=\supp([a])$, and that $\supp(\mathds{1}_U\varphi([a]))=U\cap \supp([a])$. 
Let $K\subseteq \supp(\mathds{1}_U\varphi([a]))=\supp(\varphi(\mathds{1}_U[a]))$ be a closed set. 
Then $\pi_K(\mathds{1}_U\varphi([a]))=\pi_K(\varphi(\mathds{1}_U[a]))$, where $\pi_K\colon A\to A(K)$ is the quotient map. Indeed, it follows from the commutative diagram 
\[
\xymatrix{
\Cu(A) \ar[r]^{\varphi}\ar[d]_{\pi_K} &  \Cu(B)\ar[d]^{\pi_K} \\                                                                         
\Cu(A(K)) \ar[r]^{\varphi_K} & \Cu(B(K))\ \, ,
}
\]
that $\pi_K(\mathds{1}_U[a])=\pi_K([ga])=\pi_K[a]$, since $g$ becomes invertible in $A(K)$. Therefore, 
\[
\pi_K(\varphi(\mathds{1}_U[a]))=\pi_K(\varphi([ga]))=\varphi_K \pi_K([ga])=\varphi_K\pi_K([a])\,.
\]
On the other hand, $\pi_K(\mathds{1}_U\varphi([a]))=\pi_K\varphi([a])=\varphi_K\pi_K([a])$.

Now write $[a]=\sup[a_n]$, where $([a_n])$ is a rapidly increasing sequence in $\Cu(A)$, and $\mathds{1}_U=\sup \mathds{1}_{V_n}$, where $(V_n)$ is a rapidly increasing sequence of open sets. Then $(\mathds{1}_{V_n}[a_n])$ is a rapidly increasing sequence with
$\mathds{1}_U[a]=\sup \mathds{1}_{V_n}[a_n]$ and $\mathds{1}_U\varphi([a])=\sup \mathds{1}_{V_n}\varphi([a])$. By \cite[Lemma 2.5]{ADPS} choose, for each $n$, a compact set $K_n$ such that
$$\supp(\mathds{1}_{V_n}[a_n])\subseteq K_n \subseteq \supp(\mathds{1}_{V_{n+1}}[a_{n+1}])\,.$$ Then $K_n\subseteq V_{n+1}\cap \supp([a_{n+1}])\subseteq V_{n+1}$.

By the above, $\pi_{K_n}(\mathds{1}_{V_{n+1}}\varphi([a_{n+1}]))= \pi_{K_n}\varphi(\mathds{1}_{V_{n+1}}[a_{n+1}])$, and thus:
\[\pi_{K_n}(\mathds{1}_{V_n}\varphi([a_n]))\leq \pi_{K_n}(\mathds{1}_{V_{n+1}}\varphi([a_{n+1}]))=\pi_{K_n}(\varphi(\mathds{1}_{V_{n+1}}[a_{n+1}]))\leq \pi_{K_n}(\varphi(\mathds{1}_U[a])),\]
and
\[\pi_{K_n}(\varphi(\mathds{1}_{V_n}[a_n]))\leq \pi_{K_n}(\varphi(\mathds{1}_{V_{n+1}}[a_{n+1}]))=\pi_{K_n}(\mathds{1}_{V_{n+1}}\varphi([a_{n+1}]))\leq \pi_{K_n}(\mathds{1}_U\varphi([a])).\]
Since $\supp (\mathds{1}_{V_k}[a_k])=\supp(\varphi(\mathds 1_{V_k}[a_k]))=\supp(\mathds 1_{V_k}\varphi([a_k]))$, we may apply Lemma 2.4 in \cite{ADPS} to obtain that $\mathds{1}_{V_{n}}\varphi([a_{n}])\leq \varphi(\mathds{1}_U[a]))$ 
and  $\varphi(\mathds{1}_{V_{n}}[a_{n}])\leq \mathds{1}_U\varphi([a]))$. Taking suprema in both inequalities we obtain $\mathds{1}_U\varphi([a])=\varphi(\mathds{1}_U([a]))$.

\rm(iii)$\implies$ \rm(ii): We assume now that both $A$ and $B$ are continuous fields without $K_1$ obstructions such that $A_x$ and $B_x$ have real rank zero for all $x$. Let $\varphi\colon \mathbb{V}_A(\_)\to\mathbb{V}_B(\_)$ be a sheaf isomorphism. This induces a semigroup isomorphism $\varphi_x\colon V(A_x)\to V(B_x)$ for each $x\in X$. As $A(U)$ is a stably finite algebra for any closed subset $U$ of $X$, we will identify $V(A(U))$ with its image in $\Cu(A(U))$ whenever convenient.

Since $A_x$ has real rank zero, $V(A_x)$ forms a dense subset of $\Cu(A_x)$ so we can uniquely define an isomorphism $\Cu(A_x)\to \Cu(B_x)$ in $\Cu$ which we will still denote by $\varphi_x$. This map is defined
by $\varphi_x(z)=\sup_n\varphi_x(z_n)$ where $z=\sup z_n$ and $z_n\in V(A_x)$ for all $n\geq 0$ (see, e.g. \cite{ABP}, \cite{CEI} for further details). Let us prove that the induced bijective map $\tilde\varphi\colon F_{\Cu(A)}\to F_{\Cu(B)}$ is continuous, and hence an homeomorphism. This will define an isomorphism of 
sheaves $\Gamma(-,F_{\Cu_A(-)})\cong \Gamma(-,F_{\Cu_B(-)})$ and then, using Theorem~\ref{prop:isoseccions}, it follows that $\Cu_A(\_)$ and $\Cu_B(\_)$ are isomorphic. 

Denote by $\pi_A\colon F_{\Cu(A)}\to X$ and $\pi_B\colon F_{\Cu(B)}\to X$ the natural maps. Let $U$ be an open set of $X$ and $s\in\Cu(B)$. We are to show that $\tilde{\varphi}^{-1}(U^{\gg}_s)$ is open in $F_{\Cu(A)}$. Let $z\in \tilde{\varphi}^{-1}(U^{\gg}_s)$,  and put $x=\pi_A(z)$, so that $z\in \Cu(A_x)$ for some $x\in U$. Since $\tilde\varphi(z)=\varphi_x(z)\in U^{\gg}_s$, there exists $s''\gg s$ such that $\hat{s''}(x)\ll \varphi_x(z)$. Choose $s'$ such that $s\ll s'\ll s''$.

As $\hat{s''}(x)\ll \varphi_x(z)$ there exists $z'\ll z'\in V(A_x)$ such that $\hat{s''}(x)\ll\varphi_x(z')$. Now we can find a closed subset $W'$ whose interior contains $x$, and an element $v\in V(A(W'))$ such that $\pi_x(v)=z'$. Note that $\pi_x\varphi_{W'}(v)=\varphi_x(z')$. Also, since $\hat{s'}(x)\ll\hat{s''}(x)\ll\varphi_x(z')=\widehat{\varphi_{W'}(v)}(x)$, we may use Lemma \ref{limit} to find $W\subseteq W'$ such that $x\in\mathring{W}$ and
\[
\hat{s'}(y)\ll\widehat{\varphi_{W'}(v)}(y)\text{ for all }y\in W\,.
\]
Let $t\in\Cu(A)$ be such that $\pi_W(t)=\pi_{W}^{W'}(v)$. We now claim that $\mathring{W}^{\gg}_t\subseteq{\tilde\varphi}^{-1}(U_s^{\gg})$. Let $w\in{\mathring{W}}^{\gg}_t$, and put $y=\pi_A(w)\in W$. There exists $t'\gg t$ such that $\hat{t'}(y)\ll w$, whence, applying $\tilde\varphi$ it follows that
\[
\tilde\varphi(w)\gg\tilde\varphi(\hat{t'}(y))\gg \tilde\varphi(\hat{t}(y))=\tilde\varphi(\pi_W^{W'}(v)(y))=\tilde\varphi(\pi_y(v))=\pi_y(\varphi_W(v))=\widehat{\varphi_{W'}(v)}(y)\gg \hat{s'}(y)\,,
\]
and this shows that $w\in{\tilde\varphi}^{-1}(U_s^{\gg})$.

\end{proof}
\begin{remark}
{\rm We remark that the implication (ii) $\implies $ (i) in Theorem \ref{V(A)} above holds whenever $\Cu(C(X))\cong\mathrm{Lsc}(X,\overline{\mathbb{N}})$. This is the case for spaces more general than being just one dimensional, see \cite{Robert0}.} 
\end{remark}
\begin{remark}
{\rm Our Theorem \ref{V(A)} above allows us to rephrase the classification result obtained in \cite{DEN}, by using a single invariant. Namely, let $A,B$ be separable unital continuous fields of $AF$-algebras over $[0,1]$, and let $\tilde{\phi}:\Cu(A)\to \Cu(B)$ be an isomorphism that preserves the action by $\mathrm{Lsc}([0,1],\overline{\mathbb{N}})$ and such that $\tilde{\phi}([1_A])=[1_B]$. Then $\tilde\phi$  lifts to an isomorphism $\phi\colon A\to B$ of continuous fields of C$^*$-algebras. }
\end{remark}

\end{document}